\newlength{\shift}
\newlength{\boxlength}
\renewcommand*{\title}[2][]{\noindent{\sffamily\huge#2}\bigskip}
\renewcommand*{\author}[2][]{\noindent\hspace{\shift}\begin{minipage}{\boxlength}#2\end{minipage}\smallskip}
\newcommand*{\address}[2][]{\noindent\hspace{\shift}\begin{minipage}{\boxlength}{\itshape\small#2}\end{minipage}\par}
\newcommand*{\eads}[1]{}
\newcommand*{\mailto}[1]{\texttt{\sloppy#1}}
\renewenvironment{abstract}{\bigskip\noindent\hspace{\shift}\begin{minipage}{\boxlength}\paragraph{Abstract.}}{\end{minipage}\bigskip}
\newcommand*{\ams}[1]{\end{minipage}\paragraph{2000 MSC:}#1\par}
\newcommand*{\submitto}[1]{}
\newcommand*{\ack}{\section*{Acknowledgments}}
\renewenvironment{eqnarray*}{\[\everymath{\displaystyle\everymath{}}\begin{array}{ll}}{\end{array}\]}
\newcommand*{\sectionstar}[1]{}
\newtheorem{theorem}{Theorem}[section]
\newtheorem{lemma}[theorem]{Lemma}
\newtheorem{proposition}[theorem]{Proposition}
\newtheorem{remark}[theorem]{Remark}
\newcommand*{\N}{\mathbb{N}}
\newcommand*{\R}{\mathbb{R}}
\newcommand*{\etoile}{^{\star}}
\newcommand*{\I}{^{\infty}}
\newcommand*{\petito}[1]{\ensuremath{\mathop{}\mathopen{}{\scriptstyle\mathcal{O}}\mathopen{}\left(#1\right)}}
\newcommand*{\abs}[1]{\left|#1\right|}
\newcommand*{\norme}[1]{\left\|#1\right\|}
\newcommand*{\ps}[2]{\langle#1,\,#2\rangle}
\newcommand*{\pardef}{\mathrel:=}
\newcommand*{\inv}{^{-1}}
\newcommand*{\re}{{\mathrm{Re}}\:}
\newcommand*{\im}{{\mathrm{Im}}\:}
\renewcommand*{\restriction}[2]{\mathchoice
              {\setbox1\hbox{${\displaystyle #1}_{\scriptstyle #2}$}
              \restrictionaux{#1}{#2}}
              {\setbox1\hbox{${\textstyle #1}_{\scriptstyle #2}$}
              \restrictionaux{#1}{#2}}
              {\setbox1\hbox{${\scriptstyle #1}_{\scriptscriptstyle #2}$}
              \restrictionaux{#1}{#2}}
              {\setbox1\hbox{${\scriptscriptstyle #1}_{\scriptscriptstyle #2}$}
              \restrictionaux{#1}{#2}}}
\newcommand*{\restrictionaux}[2]{{#1\,\smash{\vrule height 1.3\ht1 depth 1.3\dp1}}_{\,#2}}
\begin{document}

\title[Parameter identification and defect localization]
{Adaptive refinement and selection process through defect localization for reconstructing an inhomogeneous refraction index}

\author[Yann Grisel]{Y. Grisel\(^1\), V. Mouysset\(^2\), P. A. Mazet\(^2\) and J. P. Raymond\(^3\)}

\address{\(^1\) UPPA - IUT de Mont-de-Marsan, 40004 Mont-de-Marsan, France}
\address{\(^2\) Onera - The French Aerospace Lab, 31055 Toulouse, France}
\address{\(^3\) Université Paul Sabatier, Institut de Mathématiques de Toulouse, 31062 Toulouse Cedex, France}

\eads{\mailto{yann.grisel@univ-pau.fr}, \mailto{mouysset@onera.fr}, \mailto{jean-pierre.raymond@math.univ-toulouse.fr}}

\begin{abstract}
We consider the iterative reconstruction of both the internal geometry and the values of an inhomogeneous acoustic refraction index through a piecewise constant approximation.
In this context, we propose two enhancements  intended to reduce the number of parameters to reconstruct, while preserving accuracy.
This is achieved through the use of geometrical informations obtained from a previously developed defect localization method.
The first enhancement consists in a preliminary selection of relevant parameters, while the second one is an adaptive refinement to enhance precision with a low number of parameters.
Each of them is numerically illustrated.
\end{abstract}

\section{Introduction}

We are interested in the inverse medium problem consisting in the reconstruction of an inhomogeneous acoustic refraction index from far-field measurements generated through plane waves.
This parameter identification problem is non-linear and ill-posed, and we investigate two methods to reduce the number of computed parameters while preserving the reconstruction accuracy.
Applications are, for example, non-destructive structure testing or biomedical imaging \cite{art.scott.82,art.song.05,art.mojabi.09.2}.

Following the abundant literature, we write the inverse medium problem as a least-squares problem (see~\cite{book.bakushinsky.04} and references therein). 
Besides, since we consider discontinuous indices, we look for the index of refraction as a piecewise constant function.
In this setting, for its ease of implementation and its efficiency for reasonably sized problems, we consider 
 the  Gauss-Newton method, applied to a cost functional involving a Tikhonov regularization~\cite{book.engl.96}.
However, the Gauss-Newton method treats all parameters in the same way.
Yet, during the reconstruction, or even right from the beginning, the values of some parameters can be close to the exact value, while other parameters will need more iterations before reaching a given accuracy.
In the absence of  some local information, the accurate parameters are then uselessly updated at each iteration.
Thus, we explore two uses of geometrical informations, obtained through defect localization, to focus the reconstruction and consequently lighten its numerical cost.

By defect localization, we mean  localizing  the support of a perturbation with respect to some known reference.
However, in this paper, it is the reconstructed index that we use as the known reference, and it is the exact index that we use as an unknown perturbed  state.
Thus, defect localization can be used to locate errors in the index reconstruction.
Besides, it has recently been proved  that the location of the defects in a given refraction index could be established from far-field measurements of the unknown state and computed through a fast numerical method~\cite{art.grisel.12,art.bondarenko.13}.
Also,  shape reconstruction has already been used jointly with  parameter identification in~\cite{art.brignone.08}, by using the \textit{Linear Sampling} method \cite{art.colton.96}.

However, the \textit{Factorization method} approach, involved in~\cite{art.grisel.12} and~\cite{art.bondarenko.13}, provides a more straightforward formulation as an equivalence that is defined at each point.
So, we propose here to use this fast local information to reduce the computational effort in the complete refraction index reconstruction process.

More precisely, in a first time, we consider the case where a known index has been locally modified.
This could happen, for instance, from a local deterioration or a partially incorrect estimation of the actual index.
In this case, a preliminary defect localization provides a geometrical information that we can use to choose which parameters have to be reconstructed.
Then, the reconstruction can  be performed  straightforwardly on a reduced computational domain.
In a second time, we investigate adaptive refinement.
Here, defect localization is used to exhibit inaccurate regions in the current reconstruction.
This local information allows us to refine the reconstruction mesh in these regions and resume the reconstruction to get a better precision while restraining the number of computed parameters.

This paper is structured as follows:
In section~\ref{sec:presentation}, we specify the mathematical setting. 
We then introduce the direct problem in section~\ref{sec:direct.problem}, followed in section~\ref{sec:inverse.problem} by the description of the inverse medium problem we are interested in.
The numerical method on which we will build our enhancements is then described in section~\ref{sec:reconstruction}.
Afterwards, the defect localization and its applications are presented in section~\ref{sec:enhancements}.
We show how to reduce the reconstruction domain  in section~\ref{sec:selection}, and the adaptive refinement process is detailed in section~\ref{sec:refinement}.
Finally, we numerically illustrate the sequence of both applications, and furthermore on a non-trivial example, in section~\ref{sec:selection.et.raffinement}. 
We end the the paper by concluding remarks in section~\ref{sec:conclusion}.

\section{Presentation of the problem} \label{sec:presentation}
We start by introducing the direct problem and  the inverse medium problem, followed by its numerical treatment.

\subsection{The direct problem}\label{sec:direct.problem}
We consider time-harmonic acoustic waves, with a fixed wave number \(k\), modeled by the Helmholtz equation~\cite{book.colton.1}. 
Inhomogeneous media are then represented by an acoustic refraction index, denoted by \(n \in L\I(\R^{d})\). So, the total field, denoted by \(u_n \in L^2_{loc}(\R^d)\),
is assumed to satisfy 
\begin{equation}\label{eq:Helmholtz}
\Delta u_n +k^2n(x) u_n = 0, \quad x \in \R^d,
\end{equation}
where \(d\) is the problem's dimension (\(d = 2\) or 3).
We consider compactly supported inhomogeneities, and we denote by \(D\) the support of \(n(x)-1\). 
We also denote by \(u^i \in L^2_{loc}(\R^{d})\) an incoming wave satisfying~(\ref{eq:Helmholtz}) with \(n(x)=1\) .
The total field is then the sum of this incoming wave and the wave scattered by the inhomogeneous medium, denoted by  \(u^s \in L^2_{loc}(\R^{d})\):
\begin{equation}\label{eq:u_n}
    u_n \pardef u^s+u^i,
\end{equation}
where the scattered wave is assumed to satisfy the Sommerfeld radiation condition
\begin{equation}\label{eq:Sommerfeld}
    \partial_{r}u^s=iku^s+\petito{\abs{x}^{-\frac{d-1}{2}}}.
\end{equation}
Then, the linear system~(\ref{eq:Helmholtz})-(\ref{eq:Sommerfeld}) defines \(u_n\) uniquely from \(u^i\) and  is known to be invertible in \(L^2(D)\)~\cite[Chap. 8]{book.colton.1}. 

Besides, the outgoing part of a wave has an asymptotic behavior called the far field pattern, 
denoted by \(u_n\I \in \mathcal{C}\I(\Gamma_m)\), and given by the Atkinson expansion~\cite{conf.venkov.08}
\begin{equation}\label{eq:Atkinson}
    u_n(x) \pardef u^i(x)+\gamma \frac{e^{ik\abs{x}}}{\abs{x}^{\frac{d-1}{2}}}u_n\I(\vec x)+\petito{\abs{x}^{-\frac{d-1}{2}}},\quad \vec x \pardef \frac{x}{\abs{x}} \in \Gamma_m,
\end{equation}
where $\Gamma_m$ denotes the set of measurement directions as a subset of the unit sphere \(S^{d-1}\) (see figure~\ref{fig:schema.scattering}), and where \(\gamma\) only depends  on the dimension and is defined by
$$\gamma \pardef  \left\{ \begin{array}{ll} \frac{ e^{i\pi /4} }{ \sqrt{8\pi k} } & \textrm{if d=2,}  \\  \frac{1}{4\pi} & \textrm{if d=3.}  \end{array} \right.$$

\begin{figure}[tpb]
\centering
\includegraphics[width=.46\linewidth]{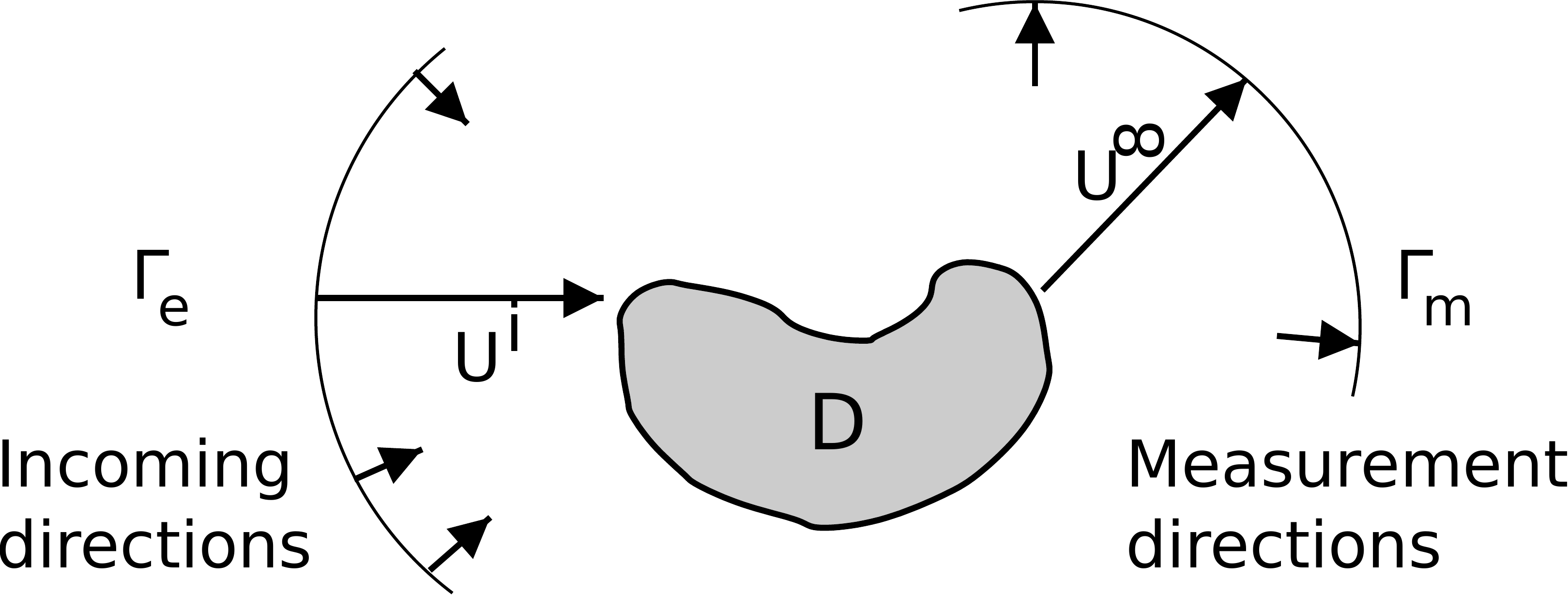}
\caption{General setting and notations.}
\label{fig:schema.scattering}
\end{figure}

Furthermore, for practical reasons, we will mainly consider scattered waves having a plane wave source.
These plane waves are defined by \[u^i(\vec\theta,x) \pardef \exp({ik\vec\theta \cdot x}),\] where \(\vec\theta\) is a unitary vector in the set of incidence directions, denoted by \(\Gamma_e\) as shown in Figure~\ref{fig:schema.scattering}.
We then denote the total field with a plane wave source of incoming direction \(\vec\theta\) by
\[u_n(\vec\theta,x), \quad \vec\theta \in \Gamma_e,\, x \in \R^d.\] 
Lastly, the corresponding far-field pattern in the measurement direction $\vec x \in \Gamma_m$ will be denoted~by \[u_n\I(\vec\theta, \vec x), \quad \vec\theta \in \Gamma_e,\, \vec x \in  \Gamma_m.\]

\subsection{The inverse medium problem} \label{sec:inverse.problem} 

We are interested in the reconstruction, from far-field data, of an (unknown) inhomogeneous refraction index that will be denoted throughout this paper by \(n\etoile \in L\I(D)\). All considered indices are implicitly extended by 1 outside \(D\).
We thus define the index-to-far-field mapping \(\mathcal{F}: L\I(D) \to \mathcal{C}\I(\Gamma_e \times \Gamma_m)\) by \[\mathcal{F}(n) \pardef u_n\I.\]
Besides, data are generally perturbed by noise or measurement errors.
So, we assume  that we only have access to $\mathfrak u^\varepsilon \in L^2(\Gamma_e \times \Gamma_m)$, the perturbed version of $u_{n\etoile}\I$ satisfying
\begin{equation}\label{def:ue}
\norme{\mathfrak u^\varepsilon - u_{n\etoile}\I}_{ L^2(\Gamma_e \times \Gamma_m)}\leqslant \varepsilon \norme{u_{n\etoile}\I}_{L^2(\Gamma_e \times \Gamma_m)}.
\end{equation}

The usual approach to this problem is then to find $n$ by minimizing the difference between $\mathcal F(n)$ and $\mathfrak u^\varepsilon$.
However, most of the methods used to solve this problem are set in Hilbert spaces, so we will have to consider $\mathcal F$ as an mapping from $L^2(D)$ into $L^2(\Gamma_e \times \Gamma_m)$.
Thus, we define the data misfit by
\begin{equation*}\label{def:J}
    J(n) \pardef c_1\norme{\mathcal{F}(n) - \mathfrak u^\varepsilon}^2_{L^2(\Gamma_e \times \Gamma_m)},
\end{equation*}
where $c_1$ denotes a normalization constant (\textit{e.g.} $c_1 = \norme{\mathfrak u^\varepsilon}^{-2}_{L^2(\Gamma_e \times \Gamma_m)})$.

Even so, this problem is not continuous, as is shown by the following proposition.
So, even a small perturbation $\varepsilon$ can lead to a minimizer very far from $n\etoile$.

\begin{proposition}\label{F.compact}
    The non-linear problem consisting in ``finding \(n_\varepsilon\) minimizing $J$'' is ill-posed in the sense of Hadamard.
\end{proposition}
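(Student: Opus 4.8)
The plan is to show that the inverse problem is ill-posed by exhibiting the failure of continuity of the inverse of $\mathcal F$, which follows once we establish that $\mathcal F$ (viewed as a map from $L^2(D)$ into $L^2(\Gamma_e\times\Gamma_m)$) is a compact operator in an appropriate sense — or, more precisely, that the forward map has a smoothing/compactness property that prevents its (set-valued) inverse from being continuous. Concretely, I would first recall from the analyticity structure of the scattering problem that for each fixed $n$, the far-field pattern $u_n^\infty(\vec\theta,\vec x)$ extends to a real-analytic (indeed entire in the angular variables) function on $\Gamma_e\times\Gamma_m$; this is classical and follows from~\cite{book.colton.1}. Hence the range of $\mathcal F$ consists only of functions that are restrictions of real-analytic kernels, so $\mathcal F$ cannot be surjective onto any neighbourhood in $L^2(\Gamma_e\times\Gamma_m)$, and its range is a proper, non-closed subspace-like subset.

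The core step is then a compactness argument. I would fix the exact index $n\etoile$ and consider a sequence $(n_j)$ in $L^\infty(D)$ that is bounded in $L^\infty$ and converges weakly-$\ast$ (or weakly in $L^2(D)$) to some limit $\bar n \neq n\etoile$, for instance a highly oscillatory sequence $n_j = n\etoile + h_j$ with $h_j \rightharpoonup 0$ but $\|h_j\|_{L^2(D)}$ bounded away from $0$ (e.g. $h_j(x)=\delta\,\cos(j\,e\cdot x)\chi_D$). The key claim is that $\mathcal F(n_j)\to\mathcal F(n\etoile)$ strongly in $L^2(\Gamma_e\times\Gamma_m)$, even though $n_j\not\to n\etoile$. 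To see this, one writes the Lippmann--Schwinger integral equation for $u_{n_j}$, uses the uniform bound on $\|n_j\|_{L^\infty}$ to get a uniform bound on $\|u_{n_j}\|_{L^2(D)}$, and then extracts the far-field via the volume-potential representation
\[
u_{n_j}^\infty(\vec\theta,\vec x) = c_d\int_D e^{-ik\,\vec x\cdot y}\,k^2\bigl(n_j(y)-1\bigr)\,u_{n_j}(\vec\theta,y)\,dy.
\]
The integrand involves the product $(n_j-1)u_{n_j}$; the smooth exponential kernel together with the weak convergence of $n_j-1$ and the (sub)sequential strong convergence of $u_{n_j}$ in $L^2(D)$ — itself obtained from compactness of the volume potential $V$ mapping $L^2(D)\to H^2_{loc}$ and hence compactly into $L^2(D)$ — forces convergence of the far field. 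Thus small data perturbations (of size $\varepsilon\to 0$, realised by taking $\mathfrak u^\varepsilon = u_{n_j}^\infty$ for $j$ large) are compatible with minimizers $n_\varepsilon$ that do not converge to $n\etoile$: the minimizer of $J$ for data $\mathfrak u^\varepsilon=\mathcal F(n_j)$ can be taken to be $n_j$ itself, giving $J=0$, yet $\|n_j-n\etoile\|_{L^2(D)}\not\to 0$. Hence solutions do not depend continuously on the data, which is exactly Hadamard ill-posedness.

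The main obstacle I anticipate is establishing the strong convergence $u_{n_j}\to u_{n\etoile}$ in $L^2(D)$ from the weak convergence of the coefficients: one cannot simply pass to the limit in the nonlinear term $n_j u_{n_j}$. The clean way around this is to exploit the compactness of the solution operator of the Lippmann--Schwinger equation — $u_n = u^i + V\bigl(k^2(n-1)u_n\bigr)$ with $V$ compact from $L^2(D)$ into itself — together with a Fredholm/continuity argument showing that the map $n\mapsto u_n$ is continuous from $(L^2(D),\text{weak})$ into $(L^2(D),\text{strong})$ on bounded sets of $L^\infty$, using uniqueness of the forward problem from~\cite[Chap.~8]{book.colton.1} to rule out spurious limits. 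Once that is in hand, everything else is bookkeeping: the far-field integral representation shows $\mathcal F$ is weakly-to-strongly sequentially continuous on $L^\infty$-bounded sets, which is precisely a compactness statement incompatible with a continuous inverse, since the unit ball of $L^2(D)$ is not compact. I would close by noting that the non-uniqueness/instability persists even after Tikhonov regularization unless $\varepsilon$ is controlled, which is the motivation for the numerical safeguards introduced in the sequel.
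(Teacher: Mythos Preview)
Your argument is correct (modulo a small slip noted below), but it takes a considerably longer route than the paper. The paper simply factors $\mathcal{F}(n)=K[(n-1)u_n]$ through the far-field integral operator $K$ with smooth kernel $e^{-ik\vec x\cdot z}$; since $K$ is linear and compact and the inner map $n\mapsto(n-1)u_n$ is bounded on $L^\infty$-balls by the cited a~priori estimate on $u_n$, the composition $\mathcal{F}$ is compact in the nonlinear sense and therefore cannot have a continuous inverse---no explicit sequence is constructed. Your approach instead builds a concrete instability witness (the oscillatory $n_j=n^\star+\delta\cos(j\,e\cdot x)\chi_D$) and must therefore establish the additional fact that $n\mapsto u_n$ is weak-$\ast$-to-strong sequentially continuous on $L^\infty$-bounded sets, which you correctly outline via compactness of the volume potential $V$ and uniqueness for the forward problem. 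That step is more delicate than anything the paper's proof requires and is logically superfluous for the bare ill-posedness statement, but it does buy something the paper's abstract argument does not: an explicit physical picture of \emph{how} instability manifests, namely that highly oscillatory perturbations of the refractive index are asymptotically invisible in the far field. One slip to correct: you write that $n_j$ converges weakly to ``some limit $\bar n\neq n^\star$'', yet your own example has $h_j\rightharpoonup 0$, so in fact $\bar n=n^\star$; what you actually need (and use) is only that $n_j\not\to n^\star$ \emph{strongly} while $n_j\rightharpoonup n^\star$ weakly.
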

\begin{proof}
    The mapping \(\mathcal{F}\) is compact, and thus cannot have a continuous inverse.
    Indeed, it has been shown that the total field $u_n$ is bounded with respect to the $L\I(D)$-norm of $n$ \cite[Proposition 2.1.14]{ths.segui.00}.
    As a consequence, the same property holds for the mapping \(n \mapsto (n-1) u_n\).
    Moreover, the asymptotic behavior of the Lippmann-Schwinger equation yields the following relationship~\cite[Chap. 8.4]{book.colton.1}:
    \begin{equation}\label{eq:F=KU}
        \mathcal{F}(n)(\vec\theta,\vec x) = \int_{z \in D}  e^{-ik\vec x \cdot z} k^2(n(z)-1)u_n(\vec\theta,z),\quad \theta \in \Gamma_e,\, \vec x \in \Gamma_m.
    \end{equation}
    Hence, the non-linear operator \(\mathcal{F}\) is the combination of a linear compact operator with a continuous mapping. Therefore, it is compact itself.
\end{proof}

\subsection{Iterative approximation by a piecewise constant index}\label{sec:reconstruction}

As stated in the introduction, we try to recover the unknown index $n\etoile$ with help of piecewise constant functions.
Hence, the indices will numerically be represented by $N$ complex parameters $(\eta_i)_{i=1 \dots N}$ associated to the same number of zones $(Z_i)_{i=1 \dots N}$, so $n(x) = \sum_{i=1 \dots N} \eta_i\mathbf{1}_{Z_i}(x)$, where $\mathbf{1}_{Z_i}(x)$ is the characteristic function of $Z_i$ and $\bigcup_{i=1 \dots N} Z_i = D$.
Each zone is thus a  set of connected elements in the underlying mesh used to solve the Helmholtz equation.
Moreover, to avoid any possibility of inverse crime, the reconstruction will be led on a second mesh that is different from the one used to generate the data $\mathfrak u^\varepsilon$.
As a consequence, the zones associated to the unknown parameters will intersect the discontinuities of $n\etoile$.
It is thus strictly impossible to reconstruct $n\etoile$ exactly.
This is illustrated in Figure~\ref{fig:maillages.disques}, showing two 2D meshes that will be used in our numerical simulations.

\begin{figure}[htbp]
\subfloat[Data mesh]{\includegraphics[height=.45\linewidth]{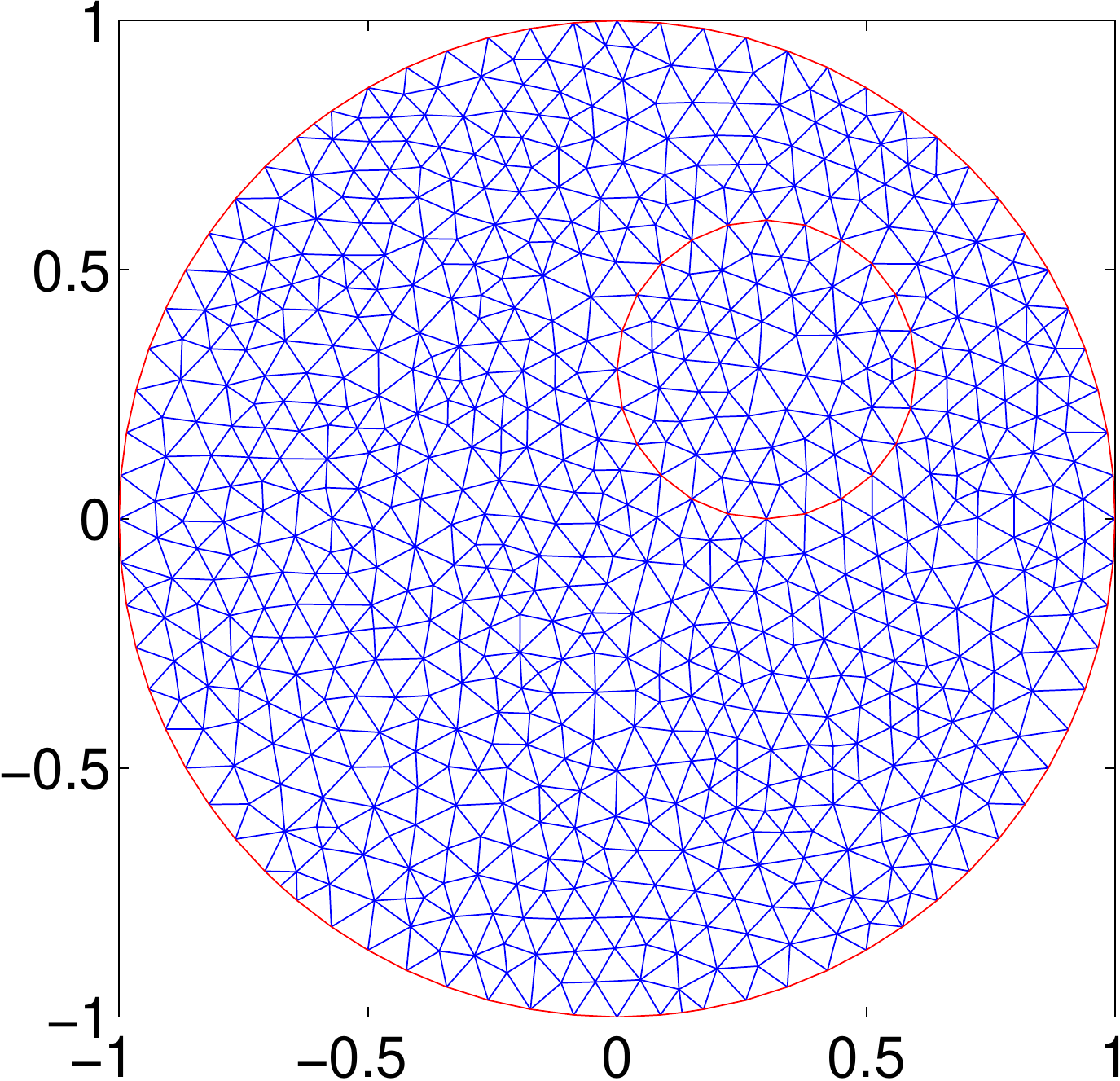}\label{fig:maillage.disques.exact}}
\hfill
\subfloat[Reconstruction mesh]{\includegraphics[height=.45\linewidth]{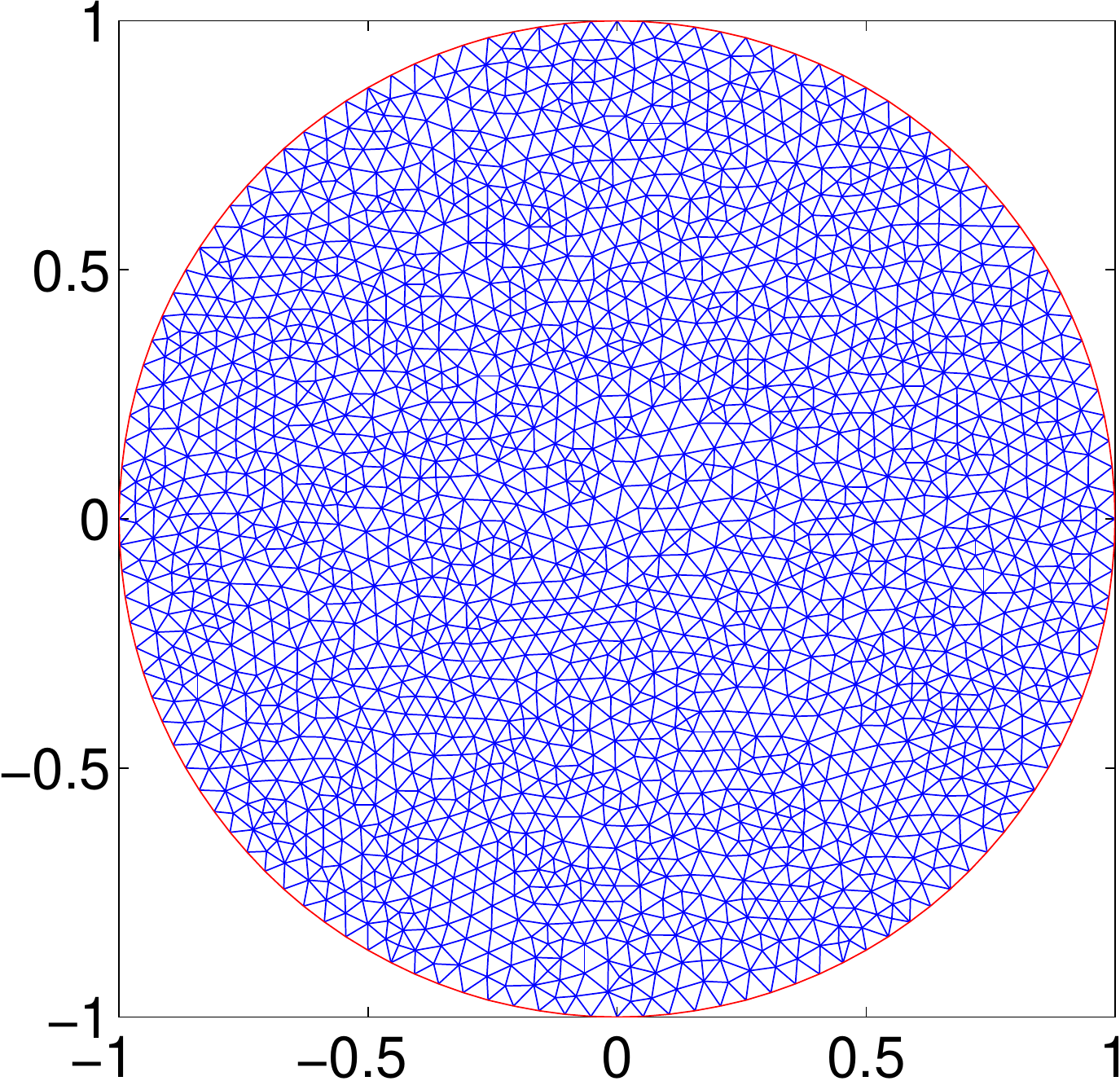}\label{fig:maillage.disques.reconstruction}}
\caption{Test case geometry}
\label{fig:maillages.disques}
\end{figure}

In this setting, we use the well-known Gauss-Newton method applied to the cost function $J$ with a standard Tikhonov regularization term~\cite{art.mojabi.09}: 
\begin{equation*}\label{def:Jtilde}
    \widetilde J(n) \pardef c_1\norme{\mathcal{F}(n) - \mathfrak u^\varepsilon}^2_{L^2(\Gamma_e \times \Gamma_m)} + c_2\norme{n-n_0}^2_{L^2(D)},
\end{equation*}
where $c_2>0$ is a regularization parameter and  $n_0 \in L^2(D)$ is an initial guess.
The choice of this regularization parameter parameter is discussed in a large number of papers, see for example~\cite{art.farquharson.04,art.bazan.09} and references therein.
Empirically, it seems that a few percent of the fidelity term $c_1\norme{\mathcal F(n) - \mathfrak u^\varepsilon}^2$ are a decent initial guess for $c_2$.
Besides, assumptions on $n_0$ and $c_2$  for  the convergence of this method are discussed in \cite{art.bakushinsky.92,art.blaschke.97}.
Hence, the index $n\etoile$ we are looking for is approximated by a sequence $(n_p)_{p \in \N}$ of indices, defined iteratively through  Algorithm~\ref{algo:gnr}.

\begin{algorithm}[htbp]
\KwIn{$n_0 \in L^2(D)$}
$p \leftarrow 0$\;
\Repeat{
$\norme{n_p-n_{p-1}}_2/(1+\norme{n_{p-1}}_2) < \epsilon$
%
%
}{
Compute $n_{p+1}$ by solving the linear system
\begin{eqnarray*}
\big(D\mathcal F(n_p)\etoile D\mathcal F(n_p) + \frac{c_2}{2c_1} id\big)(n_{p+1}-n_0) = 
\\\qquad\qquad-D\mathcal F(n_p)\etoile\big(\mathcal F(n_p)-\mathfrak u^\varepsilon-D\mathcal F(n_p)(n_p-n_0)\big),
\end{eqnarray*}
where $id$ stands for the identity matrix\;
$p \leftarrow p+1$\;
}\KwOut{$n_{p_{\text{End}}}$}
\caption{The Gauss-Newton method for $\widetilde J$}
\label{algo:gnr}
\end{algorithm}

The gradient  of the cost-function, required for the Gauss-Newton method, has the following integral representation.
\begin{lemma}\label{lem:DFn}
    The mapping \(\mathcal F\) is twice differentiable. Moreover, the differential \(D\mathcal{F}\) evaluated at \(n \in L\I(D)\) and applied to the direction \(dn \in L\I(D)\) has the following integral representation   
\begin{equation}\label{eq:DF}
        D\mathcal{F}(n)\, dn: (\vec\theta,\vec x) \mapsto \int_{z \in D} k^2 u_n(-\vec x,z) u_n(\vec\theta,z) \, dn(z) \, dz, \quad \vec\theta \in \Gamma_e,\, \vec x \in \Gamma_m.
\end{equation}
\end{lemma}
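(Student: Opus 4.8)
The plan is to use the Lippmann--Schwinger equation to realize $\mathcal F$ as a composition of smooth maps, then to differentiate that equation and to identify the resulting far field by a reciprocity argument.

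Let $\Phi$ be the outgoing fundamental solution of $\Delta+k^2$. The total field solves the Lippmann--Schwinger equation
\[
 u_n(\vec\theta,x)=u^i(\vec\theta,x)+\int_{z\in D}\Phi(x,z)\,k^2(n(z)-1)\,u_n(\vec\theta,z)\,dz,\qquad\vec\theta\in\Gamma_e,
\]
which, restricted to $x\in D$ and read in $L^2(\Gamma_e\times D)$, takes the form $(\mathrm{id}-k^2V_n)u_n=u^i$, where $V_n$ is the (weakly singular, hence compact) volume potential with multiplier $n-1$. The map $(n,u)\mapsto(\mathrm{id}-k^2V_n)u-u^i$ is affine in $u$ and affine in $n$, hence infinitely Fréchet differentiable, and $\mathrm{id}-k^2V_n$ is boundedly invertible: it is a compact perturbation of the identity and it is injective, since any kernel element would extend, through $\Phi$, to a nontrivial radiating solution of~(\ref{eq:Helmholtz})--(\ref{eq:Sommerfeld}), which does not exist. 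The implicit function theorem then shows that $n\mapsto u_n$ is infinitely differentiable from $L\I(D)$ into $L^2(\Gamma_e\times D)$. Since~(\ref{eq:F=KU}) writes $\mathcal F(n)$ as the image of the bounded bilinear map $(n,u_n)\mapsto(n-1)u_n$ under the bounded linear far-field operator $g\mapsto\big((\vec\theta,\vec x)\mapsto\int_D e^{-ik\vec x\cdot z}k^2g(\vec\theta,z)\,dz\big)$, the mapping $\mathcal F$ is itself infinitely differentiable; in particular it is twice differentiable.

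Next I would differentiate the Lippmann--Schwinger equation at $n$ in a direction $dn\in L\I(D)$; denoting by $v(\vec\theta,\cdot)$ the resulting derivative of $u_n(\vec\theta,\cdot)$, one finds that $v(\vec\theta,\cdot)$ solves $(\mathrm{id}-k^2V_n)v(\vec\theta,\cdot)=k^2\int_D\Phi(\cdot,z)\,dn(z)\,u_n(\vec\theta,z)\,dz$, i.e. $v(\vec\theta,\cdot)$ is the outgoing solution of $\Delta v+k^2nv=-k^2\,dn\,u_n(\vec\theta,\cdot)$. Differentiating~(\ref{eq:F=KU}) accordingly gives
\[
 D\mathcal F(n)\,dn\,(\vec\theta,\vec x)=\int_D e^{-ik\vec x\cdot z}k^2\big(dn(z)u_n(\vec\theta,z)+(n(z)-1)v(\vec\theta,z)\big)\,dz,
\]
which is exactly the far-field pattern of $v(\vec\theta,\cdot)$ in the direction $\vec x$. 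Writing $v$ through the Green function $G_n(\cdot,z)$ of the medium (the outgoing solution of $\Delta G_n(\cdot,z)+k^2nG_n(\cdot,z)=-\delta_z$), namely $v(\vec\theta,x)=\int_D G_n(x,z)\,k^2\,dn(z)\,u_n(\vec\theta,z)\,dz$, and exchanging the far-field limit with the integration over the bounded set $D$, we obtain
\[
 D\mathcal F(n)\,dn\,(\vec\theta,\vec x)=\int_D G_n\I(\vec x,z)\,k^2\,dn(z)\,u_n(\vec\theta,z)\,dz,
\]
where $G_n\I(\vec x,z)$ is the far-field pattern of $G_n(\cdot,z)$, normalized as in~(\ref{eq:Atkinson}).

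The remaining, and decisive, step is the mixed reciprocity relation $G_n\I(\vec x,z)=u_n(-\vec x,z)$. I would prove it by applying Green's second identity to $G_n(\cdot,z)$ and $u_n(-\vec x,\cdot)$ on a large ball with a small ball around $z$ excised, letting the radius tend to infinity and using the Sommerfeld condition~(\ref{eq:Sommerfeld}) for the scattered parts together with the asymptotics of the plane wave $u^i(-\vec x,\cdot)$; the overall constant is then fixed by checking the case $n\equiv1$, for which both sides reduce to $e^{-ik\vec x\cdot z}$. Substituting this relation into the last display produces formula~(\ref{eq:DF}). I expect this reciprocity argument---keeping track of the normalization in~(\ref{eq:Atkinson}) and showing that the boundary integrals at infinity collapse to the far-field pattern---to be the main obstacle, whereas the differentiability statement and the differentiation of the Lippmann--Schwinger equation are routine.
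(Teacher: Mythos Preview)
Your proposal is correct and follows essentially the same route as the paper: derive the PDE \((\Delta+k^2n)v=-k^2\,dn\,u_n(\vec\theta,\cdot)\) for the derivative, represent its solution through the medium's Green function, pass to the far field, and then invoke the mixed reciprocity relation \(G_n\I(\vec x,z)=u_n(-\vec x,z)\). The only differences are expository: where the paper cites a reference for the twice-differentiability of the scattered field and for mixed reciprocity, you supply self-contained arguments (implicit function theorem on the Lippmann--Schwinger equation; a Green's-identity sketch for reciprocity), and you take a minor detour by first differentiating~(\ref{eq:F=KU}) and then recognizing the result as the far field of \(v\), whereas the paper goes straight from the PDE for \(v\) to its Green-function representation.
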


\begin{proof}
    Expansion~(\ref{eq:Atkinson}) shows that \(u_n\I(\vec\theta,\cdot)\) is linear with respect to the scattered field \((u_n-u^i)(\vec\theta,\cdot)\).
    Furthermore, It has been shown in \cite[Proposition 4.3.1]{ths.segui.00} that the scattered field is twice differentiable with respect to $n$ and that the differential of the index-to-scattered-field mapping evaluated at  \(n\in L\I(D)\), applied to \(dn\in L\I(D)\), is the function \(v^s(\vec\theta,\cdot) \in L^2_{loc}(\R^d)\) satisfying
    \begin{equation}\label{eq:vs}
        \left(\Delta_x + k^2n(x)\right)v^s(\vec\theta,x) = -k^2\, u_n(\vec\theta,x) \, dn(x),\quad x \in \R^d,
    \end{equation}
    and the Sommerfeld radiation condition~(\ref{eq:Sommerfeld}).
    Note that, contrarily to $n$, the direction $dn$ is extended by 0 outside $D$.
    Thus, $\mathcal{F}$ is twice differentiable, and its differential is defined on \(\mathcal{C}\I(\Gamma_e \times \Gamma_m)\) by \(D\mathcal{F}(n)\, dn = v\I\).
    
    Now, let us denote by \(\Phi_{n}(z,x)\) the Green function of the Helmholtz equation~(\ref{eq:Helmholtz}).
    Multiplying~(\ref{eq:vs}) by \(\Phi_{n}(z,x)\), integrating over \(D\), and using Green's formula,  yields
    \[v^s(\vec\theta,x) = \int_{z \in D} k^2 \Phi_{n}(z,x)u_n(\vec\theta,z) \, dn(z) \, dz,\quad x \in \R^d.\]
    The asymptotic behaviour is then given by
    \[v\I(\vec\theta,\vec x) = \int_{z \in D} k^2 \Phi_{n}\I(z,\vec x)u_n(\vec\theta,z) \, dn(z) \, dz,\quad \vec x \in S^{d-1}.\]
     Finally, representation~(\ref{eq:DF}) is obtained by applying the mixed reciprocity principle: $ \Phi_{n}\I(z,\vec x) = u_n(-\vec x,z)$ (see~\cite[equation (3.66)]{art.nachman.07}). 
\end{proof}

\subsubsection*{Numerical example}

\paragraph{Set-up}
To illustrate our reconstruction schemes in $\R^2$, we consider a disc $D$ of radius 1 centered at the origin.
The embedded perturbation $\Omega$ is then chosen as another disc centered at $(0.3, 0.3)$, and of radius 0.3, as shown in Figure~\ref{fig:maillage.disques.exact}.
The (perturbed) index we are looking for is set to $n\etoile \pardef 1.3$ in $D\setminus \Omega$ and $n\etoile \pardef 1.6$ in $\Omega$ whereas the initial guess, corresponding to the last known state, is $n_0 \pardef 1.3$ in $D$.

The Gaus-Newton method is performed with the regularization parameter  
$c_2 \pardef 10^{-2}$ (and $c_1 = \norme{\mathfrak u^\varepsilon}^{-2}_{L^2(\Gamma_e \times \Gamma_m)}$, as previously denoted).
Also, solutions to the Helmholtz equation are computed \textit{via} a \(P^1\) finite element method  and Cartesian Perfectly Matched Layers (PML)~\cite{ths.dah.01}.
Lastly, the corresponding far-fields are evaluated through the representation formula~(\ref{eq:F=KU}).
For all these examples, the wave number is set to $k=5$, and the angles corresponding  to incoming/measurement directions are equally distributed over  $[0,2\pi]$.

\paragraph{Results}
An example can be seen in Figure~\ref{fig:reconstruction.de.reference} with a reconstruction mesh of 2672 triangles (see Figure~\ref{fig:maillage.disques.reconstruction}) divided into $N = 10$, $N = 27$, $N=75$ and $N = 2672$ zones. 

\begin{figure}[htbp]
\subfloat[$N = 10$ parameters]{\includegraphics[width=.48\linewidth]{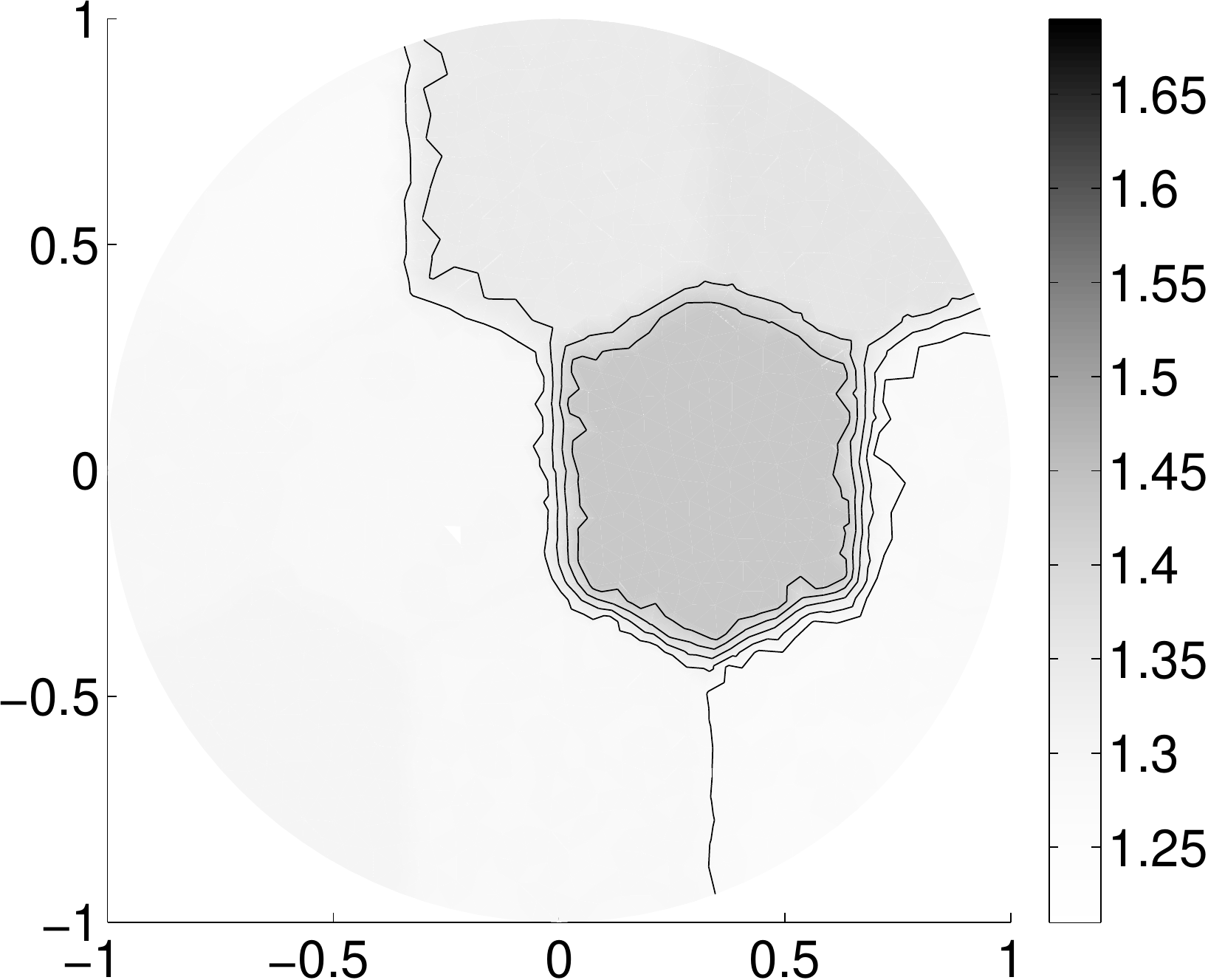}}
\subfloat[$N = 27$ parameters]{\includegraphics[width=.48\linewidth]{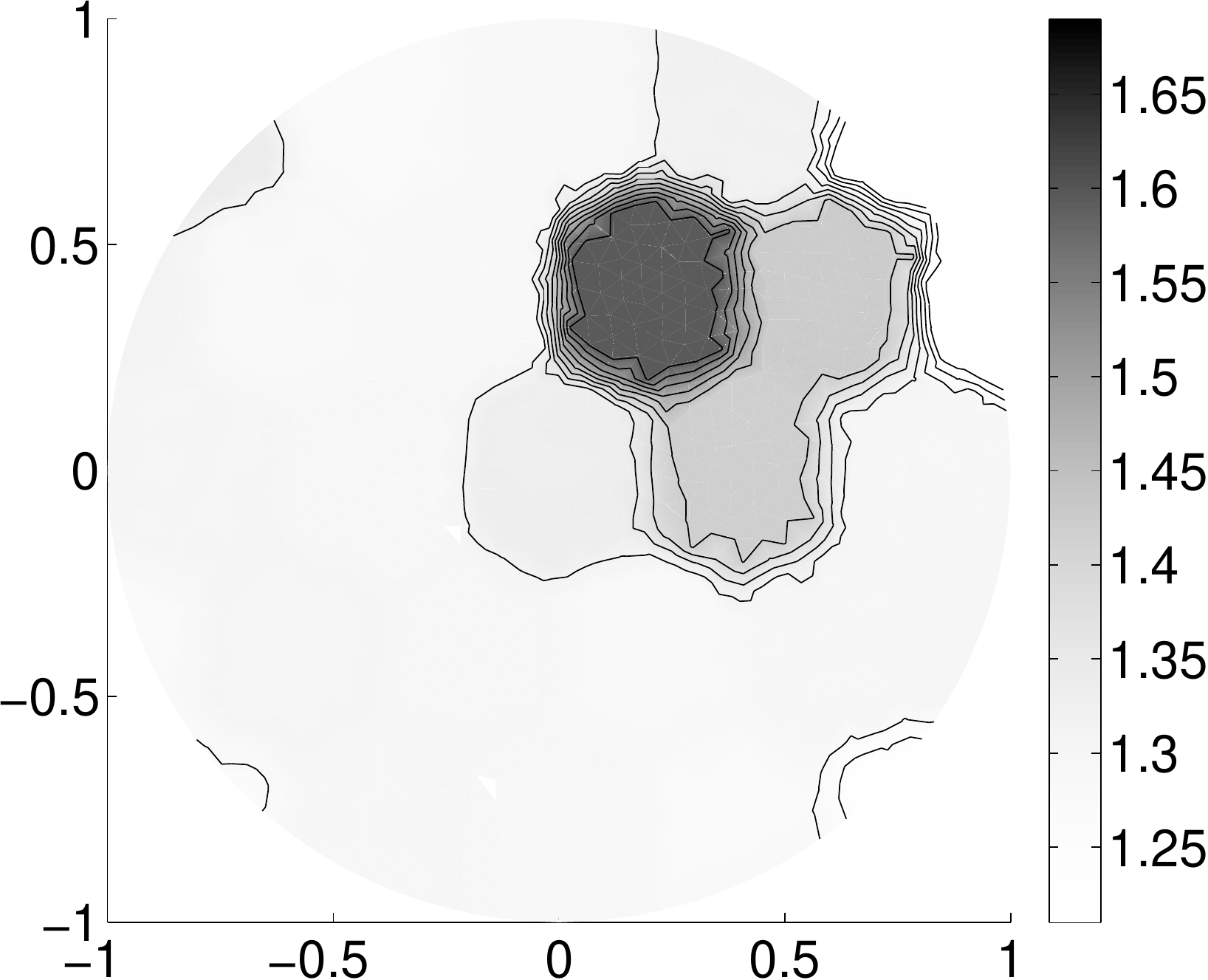}}
\\
\subfloat[$N = 75$ parameters]{\includegraphics[width=.48\linewidth]{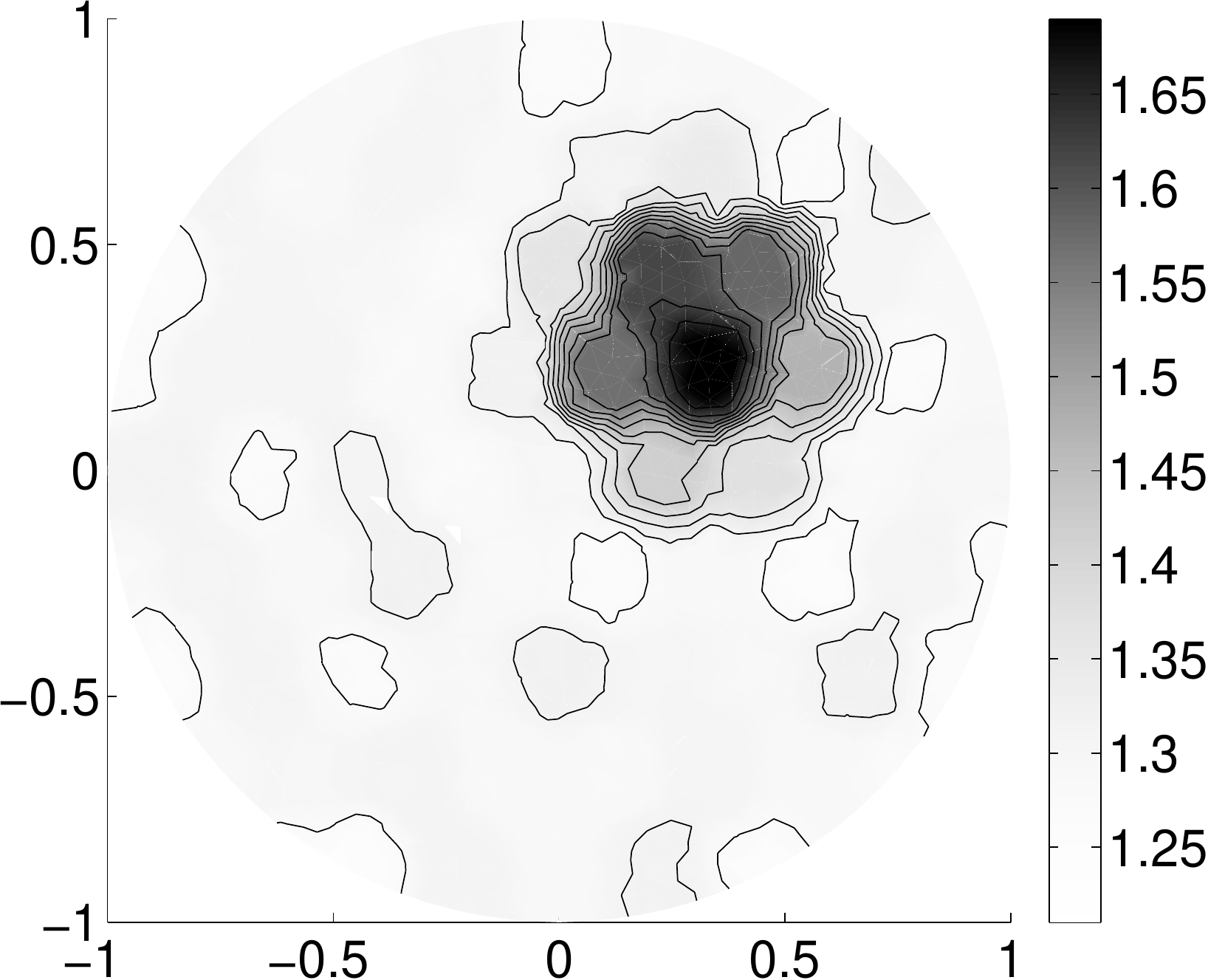}}
\subfloat[$N = 2672$ parameters]{\includegraphics[width=.48\linewidth]{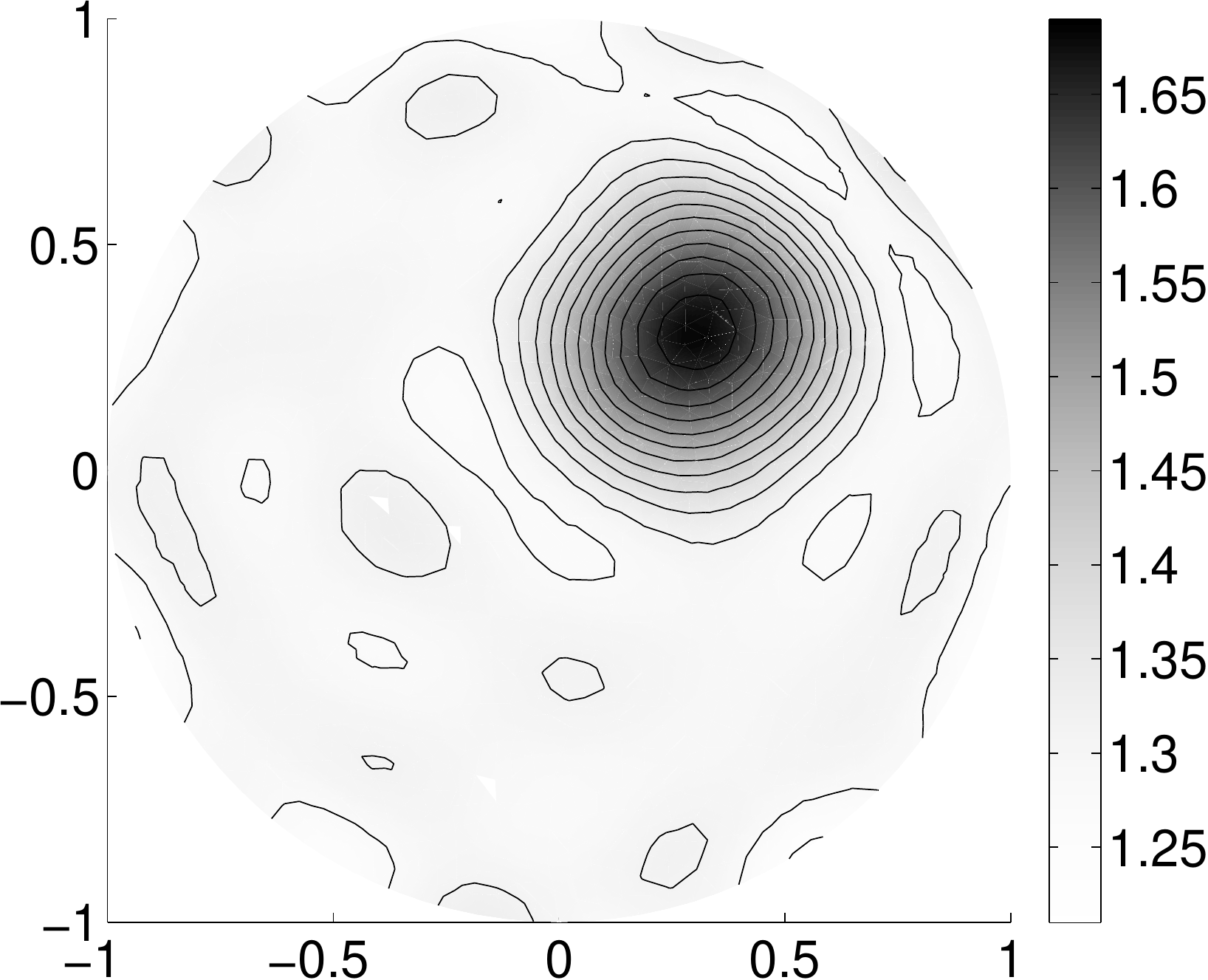}}
\\
\subfloat[Evolution of the fidelity term]{\includegraphics[width=.45\linewidth]{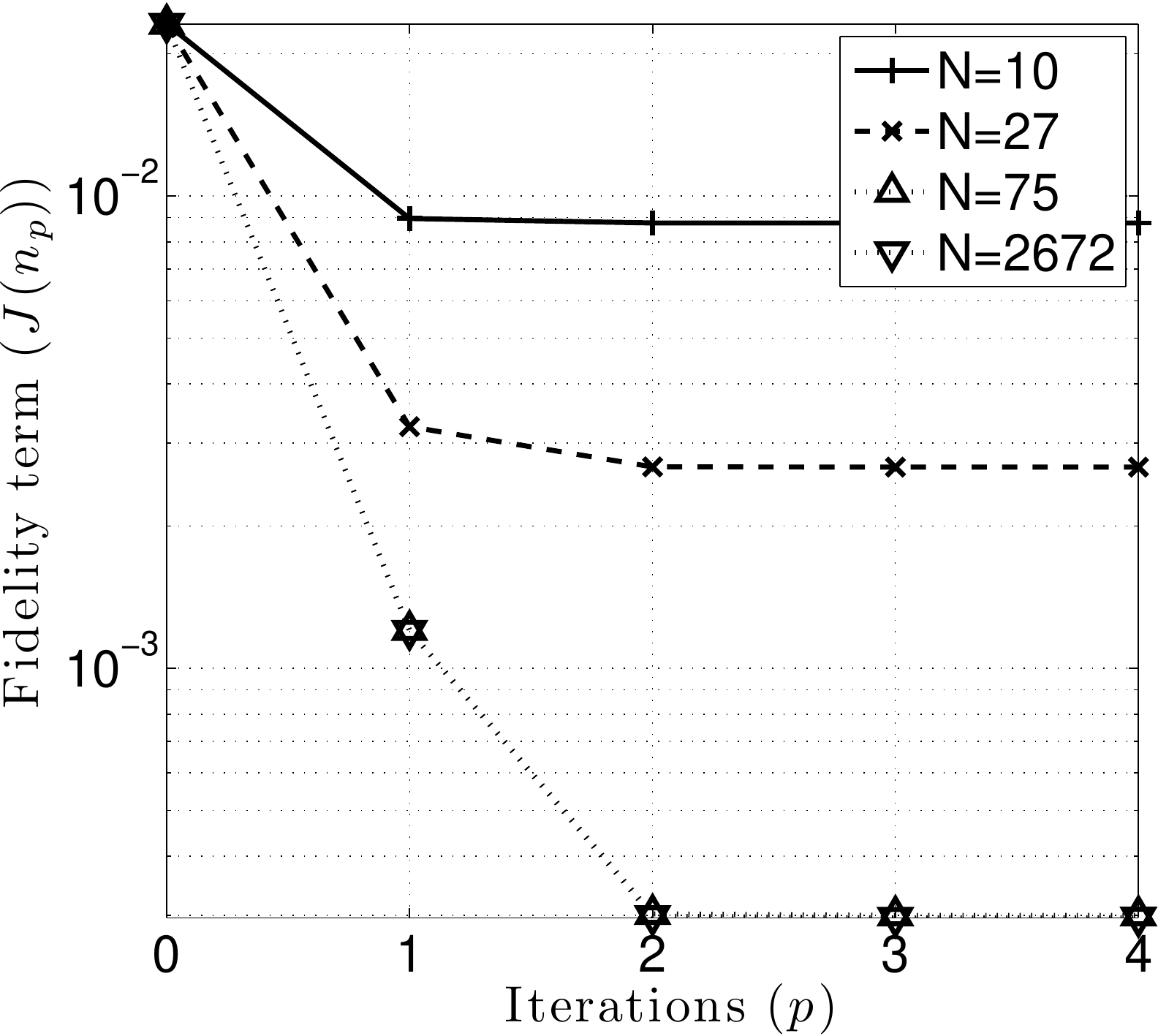}}\hfill
\subfloat[Evolution of the relative error]{\includegraphics[width=.45\linewidth]{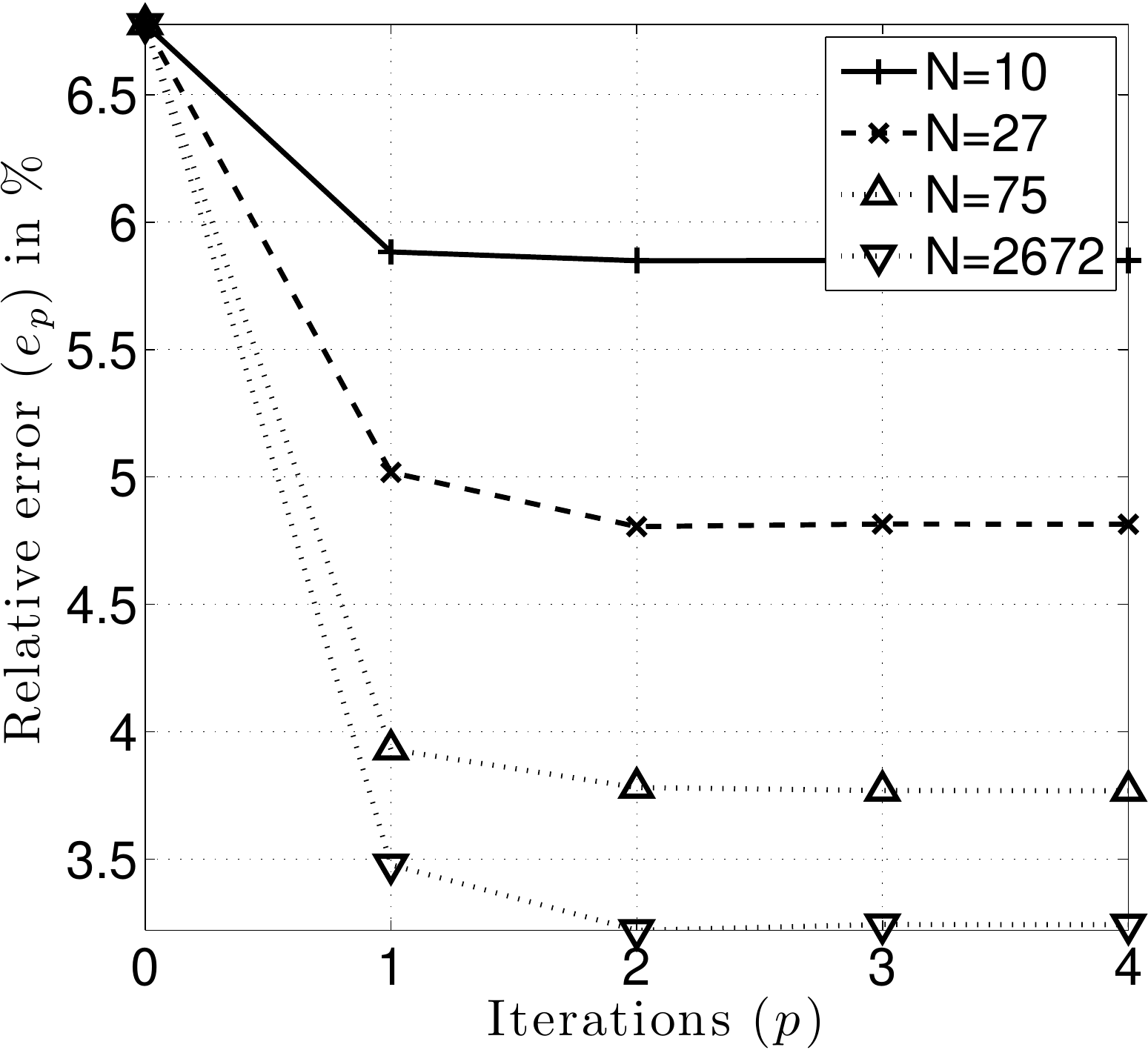}\label{fig:reference.erreur}}
\caption{Gauss-Newton reconstruction with $30 \times 30$ data and $\varepsilon = 2\%$ noise}
\label{fig:reconstruction.de.reference}
\end{figure}

More precisely, the final relative error $$e_{p_\text{End}} \pardef \norme{n_{p_\text{End}}-n\etoile}_{L^2(D)}/\norme{n\etoile}_{L^2(D)}$$ is synthesized as a function of the number of zones $N$ in Figure~\ref{fig:reference.zones}.

Moreover, for comparison purpose, we  list in Table~\ref{tab:9} the exact final relative error obtained in several configurations.
Besides, we see in this table that the relative error is of order $10^{-2}$, so we choose the stopping criterion $\epsilon = 10^{-4}$ for all our reconstructions.
In all these test cases, this bound was reached after four iterations.

\paragraph{Remark} 
The low error obtained for $N = 19$ is a particular case related to the considered test case.
Indeed, it just happens that  this choice of zones provides a natural match to our simple geometry, yielding a  reconstruction that is better than expected.

\begin{figure}[htbp]
\centering
\includegraphics[width=.48\linewidth]{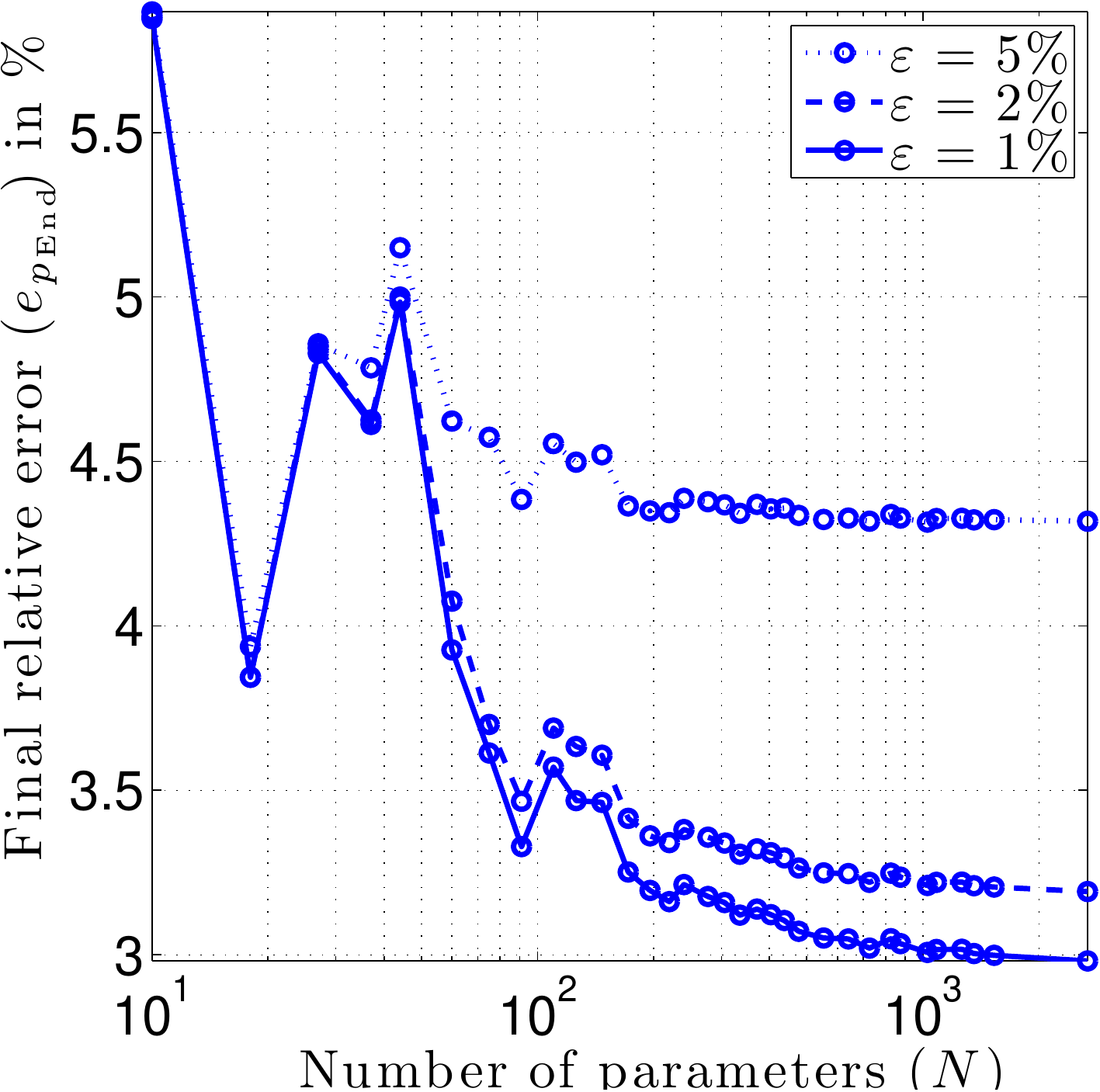}
\caption{Final relative error with $30 \times 30$ data and different noise levels $\varepsilon$}
\label{fig:reference.zones}
\end{figure}

\begin{table}[htbp]
\centering
\begin{tabular}{cc|*{3}{c}}
\toprule

&  & \multicolumn{1}{c}{$15\times15$ data} & \multicolumn{1}{c}{$30\times30$ data} & \multicolumn{1}{c}{$60\times60$ data}\\ 
$N$ & $\varepsilon$ &  $e_{p_{\text{End}}}$ &  $e_{p_{\text{End}}}$ &  $e_{p_{\text{End}}}$\\\midrule

\multirow{3}{*}{10} 

 & \multirow{1}{*}{5$\%$} 
 & 5.9$\%$ & 5.9$\%$ & 5.9$\%$ \\ 
 & \multirow{1}{*}{2$\%$} 
 & 5.9$\%$ & 5.9$\%$ & 5.8$\%$ \\ 
 & \multirow{1}{*}{1$\%$} 
 & 5.9$\%$ & 5.8$\%$ & 5.8$\%$ \\ 
\midrule

\multirow{3}{*}{27} 

 & \multirow{1}{*}{5$\%$} 
 & 4.9$\%$ & 4.9$\%$ & 4.9$\%$ \\ 
 & \multirow{1}{*}{2$\%$} 
 & 4.8$\%$ & 4.8$\%$ & 4.8$\%$ \\ 
 & \multirow{1}{*}{1$\%$} 
 & 4.9$\%$ & 4.8$\%$ & 4.8$\%$ \\ 
\midrule

\multirow{3}{*}{75} 

 & \multirow{1}{*}{5$\%$} 
 & 5.4$\%$ & 5.0$\%$ & 4.4$\%$ \\ 
 & \multirow{1}{*}{2$\%$} 
 & 3.9$\%$ & 3.7$\%$ & 3.7$\%$ \\ 
 & \multirow{1}{*}{1$\%$} 
 & 3.6$\%$ & 3.6$\%$ & 3.6$\%$ \\ 
\midrule

\multirow{3}{*}{2672} 

 & \multirow{1}{*}{5$\%$} 
 & 5.3$\%$ & 4.5$\%$ & 3.9$\%$ \\ 
 & \multirow{1}{*}{2$\%$} 
 & 3.5$\%$ & 3.3$\%$ & 3.1$\%$ \\ 
 & \multirow{1}{*}{1$\%$} 
 & 3.1$\%$ & 3.0$\%$ & 2.9$\%$ \\ 
\bottomrule 

\end{tabular} 
\caption{\label{tab:9} Gauss Newton reconstruction} 
\end{table}

\section{Enhancements of the Gauss-Newton method \textit{via} defect localization}\label{sec:enhancements}

In the presented piecewise constant iterative approximation, the possible precision is directly linked to the number of basis functions $N$ which, in turn, is linked to the computational effort.
In the lack of some geometrical informations, all parameters are equally treated and updated at each iteration.
However, this can generate more effort than is really needed, and we address two cases where these unnecessary efforts can be reduced.

\begin{enumerate}
\item
For the first case, we consider a bounded perturbation in a known initial state $n_0$.
So, most of the values of the index have not changed and should not be reconstructed.
\item
For the second case, we are concerned in focusing on the most inexact constants during the reconstruction. 
Indeed, to obtain a precise identification, the reconstruction mesh has to be refined in the zones intersected by the discontinuities of $n\etoile$.
However, if $n\etoile$ is constant in large areas, refining the reconstruction mesh everywhere only raises the computational effort for a relatively small precision increment.
\end{enumerate}

To address these aspects of the reconstruction, the useful information in both cases would thus be the localization of the nearly exact constants.
Of course, to enhance the complete reconstruction, access to this specific information should be fast.
To this end, it has been shown that there exists a defect localization function recalled in the following theorem.

\begin{theorem}\label{thm:localization}\cite[Theorem 6.1]{art.grisel.12}
Assume that \(\Gamma_m=\Gamma_e=S^{d-1}\).
Then, define a measurement operator $W \pardef \left( Id + 2ik\abs{\gamma}^{2}F_{n}\right)\left(F_{n\etoile}-F_{n} \right),$
 where
 \(F_n: L^2(S^{d-1}) \to L^2(S^{d-1})\) denotes the classical far-field operator, defined by
\(F_ng (\vec x) \pardef \ps{g}{\overline{u_n\I(\cdot,\vec x)}}_{L^2(S^{d-1})}\).
Next, we define the positive self-adjoint operator  \(W_{\#}\) by  \(W_{\#} \pardef \abs{W + W\etoile}+\abs{W - W\etoile}\), where  the notation \(\abs{\cdot}\) applied to an operator \(L\) stands for \(\abs{L} \pardef (L\etoile L)^\frac{1}{2}\).
    Lastly, assume that \(n\) and \(n\etoile\) are real valued, and that either \((n-n\etoile)\) or \((n\etoile-n)\) is locally bounded from below in $\Omega \pardef \mathrm{support}(n-n\etoile)$.

    Then, for each \(z \in \R^d\), we  have $n(z) \neq n\etoile(z) $ if, and only if, $$\mathcal{S}_{\{n,n\etoile\}}(z) \pardef \left(\sum_{j}\frac{\abs{\langle \overline{u_{n}(\cdot,z)},\psi_j \rangle_{L^2(S^{d-1}) }}^2}{\sigma_j}\right)\inv >0,$$ where \((\sigma_j,\,\psi_j)\) is an eigensystem of  \(W_{\#}\).
\end{theorem}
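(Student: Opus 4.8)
The statement is quoted from \cite[Theorem 6.1]{art.grisel.12}, where it is obtained through the machinery of the \textit{Factorization method}; the plan below follows that route. The full-aperture hypothesis \(\Gamma_e=\Gamma_m=S^{d-1}\) makes \(F_n\) and \(F_{n\etoile}\) compact operators on \(L^2(S^{d-1})\), so the first step is to factorize the data perturbation relative to the \emph{background} index \(n\). Rewriting the Lippmann--Schwinger identity (\ref{eq:F=KU}) with \(n\) in place of the trivial index, one expects a splitting \(F_{n\etoile}-F_n=\mathcal H\etoile\,T\,\mathcal H\), where \(\mathcal H:L^2(S^{d-1})\to L^2(\Omega)\) sends a density \(g\) to the restriction to \(\Omega\) of the superposition \(x\mapsto\int_{S^{d-1}}u_n(\vec\theta,x)\,g(\vec\theta)\,d\vec\theta\), and \(T\) is built from multiplication by \(k^2(n\etoile-n)\) composed with the solution operator of (\ref{eq:vs}) for \(n\etoile\). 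Since the background scattering operator \(Id+2ik\abs{\gamma}^{2}F_n\) is unitary when \(n\) is real valued (optical theorem), the auxiliary operator \(W\) inherits the factorization \(W=\mathcal H\etoile\,(S_nT)\,\mathcal H\), and passing to its self-adjoint symmetrization \(W_{\#}=\abs{W+W\etoile}+\abs{W-W\etoile}\) in the Kirsch--Grinberg fashion should yield \(W_{\#}=\mathcal H\etoile\,T_{\#}\,\mathcal H\) with \(T_{\#}\) bounded and self-adjoint.

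Next I would prove that \(T_{\#}\) is coercive on \(\overline{\mathrm{Range}(\mathcal H)}\). This is where the hypotheses enter: \(n\) and \(n\etoile\) real valued, and \((n-n\etoile)\) or \((n\etoile-n)\) locally bounded from below on \(\Omega\). Combining this one-sided lower bound with the \(L\I\)-bound on \(u_n\) recalled in the proof of Proposition~\ref{F.compact} (from \cite[Proposition 2.1.14]{ths.segui.00}) and the standard resolvent estimates for (\ref{eq:vs}), one should get \(\ps{T_{\#}\phi}{\phi}\geqslant c\norme{\phi}^2\) on that subspace. Kirsch's range lemma — if \(A=B\etoile C B\) with \(C\) coercive then \(\mathrm{Range}(A^{1/2})=\mathrm{Range}(B\etoile)\) — then gives the range identity \(\mathrm{Range}(W_{\#}^{1/2})=\mathrm{Range}(\mathcal H\etoile)=\mathrm{Range}(\mathcal G)\), where \(\mathcal G\) maps a source supported in \(\Omega\) to the far-field pattern it radiates in the background medium \(n\).

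Finally I would convert this range identity into the pointwise test. For \(z\in\R^d\) the natural test function is \(\overline{u_n(\cdot,z)}\), which by the mixed reciprocity relation used in Lemma~\ref{lem:DFn} together with the reality symmetry \(\overline{u_n(\vec\theta,z)}=u_n(-\vec\theta,z)\) coincides with the far-field pattern \(\Phi_n\I(z,\cdot)\) of the background Green function with source at \(z\). If \(z\in\Omega\) this pattern lies in \(\mathrm{Range}(\mathcal G)\) by construction; if \(z\notin\Omega\) a unique-continuation argument across \(z\) shows it cannot, since otherwise one could prolong a radiating solution through the singularity of \(\Phi_n(\cdot,z)\). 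Hence \(n(z)\neq n\etoile(z)\) exactly when \(\overline{u_n(\cdot,z)}\in\mathrm{Range}(W_{\#}^{1/2})\), and Picard's criterion for the compact positive self-adjoint \(W_{\#}\) with eigensystem \((\sigma_j,\psi_j)\) rewrites the latter as \(\sum_j\sigma_j\inv\abs{\ps{\overline{u_n(\cdot,z)}}{\psi_j}}^2<\infty\) — which, reading \(\mathcal{S}_{\{n,n\etoile\}}(z)\) as the reciprocal of that series (equal to \(0\) when it diverges), is precisely \(\mathcal{S}_{\{n,n\etoile\}}(z)>0\).

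The main obstacle is the coercivity of \(T_{\#}\): controlling the sign of the symmetrized middle operator under a merely one-sided lower bound on \(n-n\etoile\) (rather than a two-sided definiteness assumption) is the only genuinely delicate point, everything else being the now-standard soft part of the Factorization method.
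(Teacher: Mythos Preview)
The paper does not contain its own proof of this theorem: it is simply quoted, with the citation \cite[Theorem 6.1]{art.grisel.12}, and followed immediately by Remark~\ref{rmk:localization}. So there is no in-paper argument to compare your proposal against. You recognised this yourself in your opening sentence, and the outline you give --- factorization \(F_{n\etoile}-F_n=\mathcal H\etoile T\mathcal H\) relative to the background \(n\), passage to \(W_{\#}\) via the unitarity of \(Id+2ik\abs{\gamma}^2F_n\), coercivity of the symmetrized middle operator under the one-sided sign assumption, the Kirsch range identity \(\mathrm{Range}(W_{\#}^{1/2})=\mathrm{Range}(\mathcal H\etoile)\), the point-source test via mixed reciprocity, and Picard's criterion --- is a faithful sketch of the Factorization-method argument one expects in the cited reference. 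Your identification of the coercivity step as the only genuinely delicate point is also accurate.
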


\begin{remark}\label{rmk:localization}
Theorem~\ref{thm:localization} requires full bi-static data ($\Gamma_m = \Gamma_e = S^{d-1}$) and real-valued indices.
However, we also recall the conjecture, stated in \cite[Remark 6.2]{art.grisel.12}:
To build the localization function $\mathcal S$, the eigensystem of $W_\#$, denoted by $(\sigma_j,\,\psi_j)$, could be replaced by a right-singular system of \(\left(F_{n\etoile}-F_{n} \right)\).
The main benefit is the possibility of considering  \(\Gamma_m \neq \Gamma_e \neq S^{d-1}\) and complex valued indices.

Furthermore, numerical examples in~\cite{art.grisel.12} show that this localization is  effective for defects bigger than (approximately) one over six of the wavelength.
Besides, in order to get satisfactory results in the successive resolutions of the Helmholtz equation,  we have set the reconstruction mesh size to be about one over twenty of the wavelength.
Thus, we will only consider defects that cover at least four connected mesh elements.

Finally, the examples shown in~\cite{art.grisel.12} exhibit that defects can be localized even when the surrounding background is not precisely known.
Practically, low amplitude inaccuracies with respect to the exact index do not seem to interfere with the localization of the contrasting defects.
Thus,  geometrical information gained through the defect localization presented here is expected to focus on the most "defective" zones.
\end{remark}

\subsection{Selective reconstruction}\label{sec:selection}

We here consider the case where the initial guess $n_0$ is exact, except for some perturbation whose support will be denoted by $\Omega$.
Thus, we propose to perform a preliminary selection of the parameters, to reconstruct only the perturbed ones.
The selection is performed by considering only the parameters associated to zones where the maximal value of the (normalized) defect localization function $\mathcal{S}_{\{n_0,n\etoile\}}/\max_D \mathcal{S}_{\{n_0,n\etoile\}}$ is above some threshold $\mathcal{T}$.
The whole index $n\etoile$ is then reconstructed by 
updating those  parameters only.
This leads to a reconstruction, described in Algorithm~\ref{algo:selection}, using a number of parameters $N_{Sel}$ that should be significantly less than $N$.

\begin{algorithm}[htbp]
\KwIn{$n_0 \in L^2(D)$}
$\mathcal S_i \leftarrow \max_{Z_i}\mathcal{S}_{\{n_0,n\etoile\}}(x)$\;
$\Omega_\mathcal{T} \leftarrow$ the set of zones for which $\mathcal S_i > \mathcal{T} \max \mathcal S_i$\;
$n_{p_{\text{End}}} \leftarrow$ \textbf{Algorithm~\ref{algo:gnr}}$\mathbf{(\restriction{n_0}{\Omega_\mathcal{T}})}$ (all indices are extended by $n_0$ outside $\Omega_\mathcal{T}$)\;
\KwOut{$n_{p_{\text{End}}}$}
\caption{Selective reconstruction}
\label{algo:selection}
\end{algorithm}

\subsubsection*{Numerical example}

%

\paragraph{Set-up}
In the framework of section \ref{sec:reconstruction}, we here consider the smallest  possible zones, that is one parameter for each triangle of the reconstruction mesh.
Figure~\ref{fig:reconstruction.selective} shows  which zones are selected with three threshold values $\mathcal{T} = 10\%$, $\mathcal{T} = 20\%$ and $\mathcal{T} = 30\%$.

\paragraph{Results}
We can see that a threshold of $\mathcal{T} = 10\%$ yields an accurate selection of the perturbation,  and thus provides a satisfactory reconstruction with only  $N_{Sel}= 323$ selected parameters.
Thus, we end up with significantly less parameters than the 2672 we  have initially considered.

%
%


\begin{figure}[htbp]
\centering

\subfloat[Selected parameters ($N_{Sel} = 316$)]{\makebox[.45\linewidth]{\includegraphics[width=.4\linewidth]{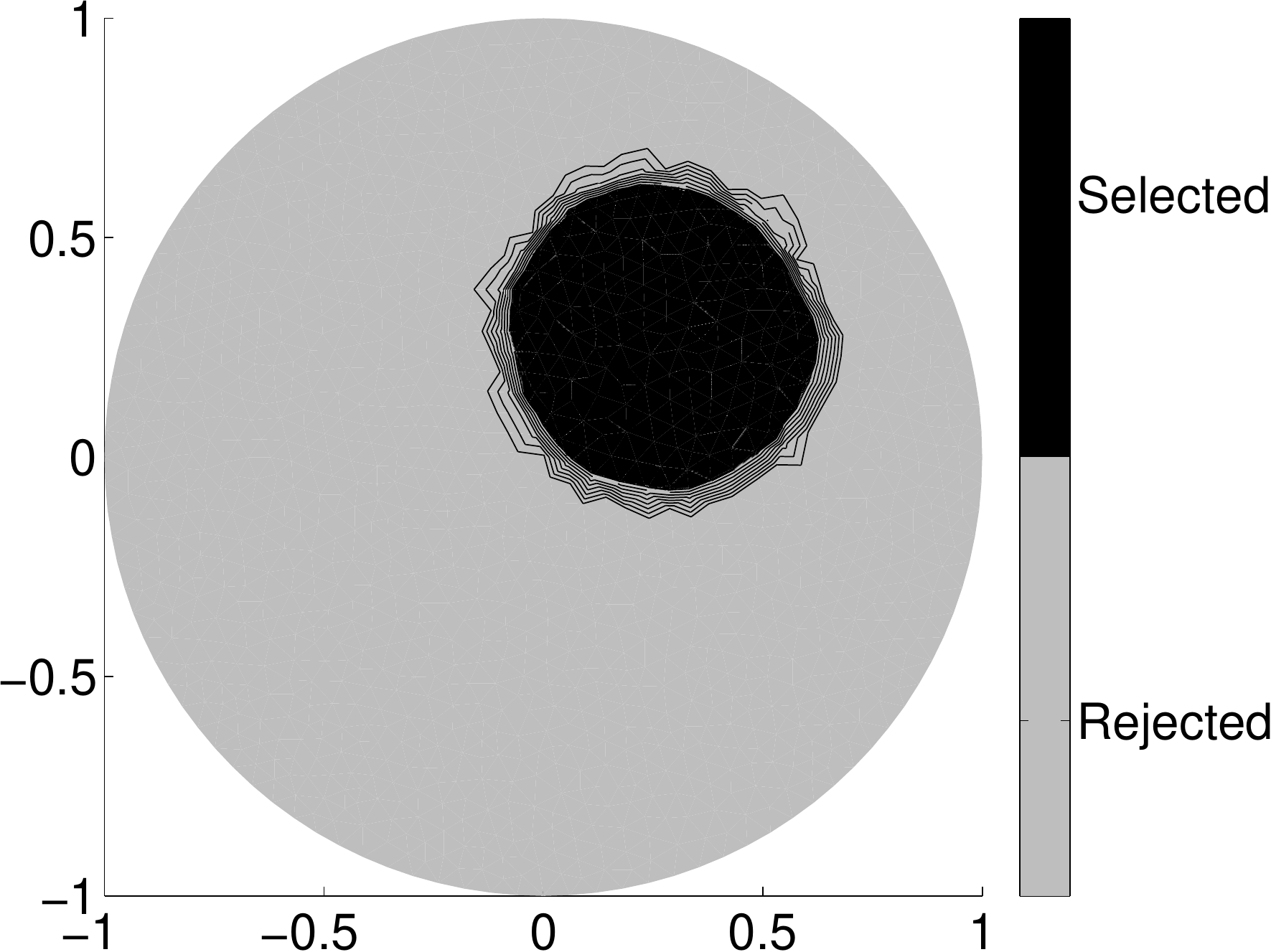}}\label{fig:selection.10}}
\subfloat[Final index ($n_{p_{\text{End}}}$)]{\makebox[.45\linewidth]{\includegraphics[width=.4\linewidth]{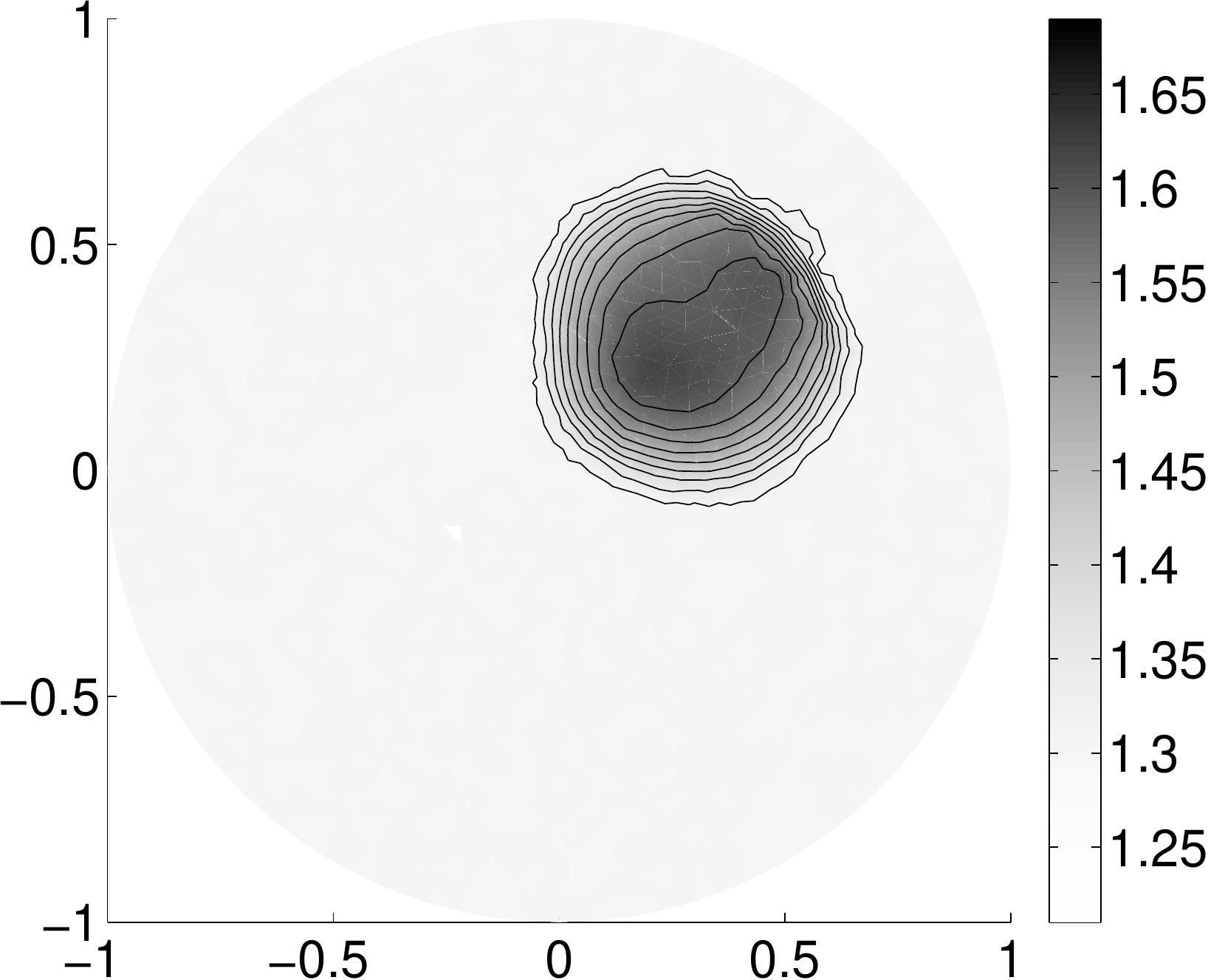}}}
$$\textrm{Selection threshold }\mathcal T = 10\%$$\hrule

\subfloat[Selected parameters ($N_{Sel} = 178$)]{\makebox[.45\linewidth]{\includegraphics[width=.4\linewidth]{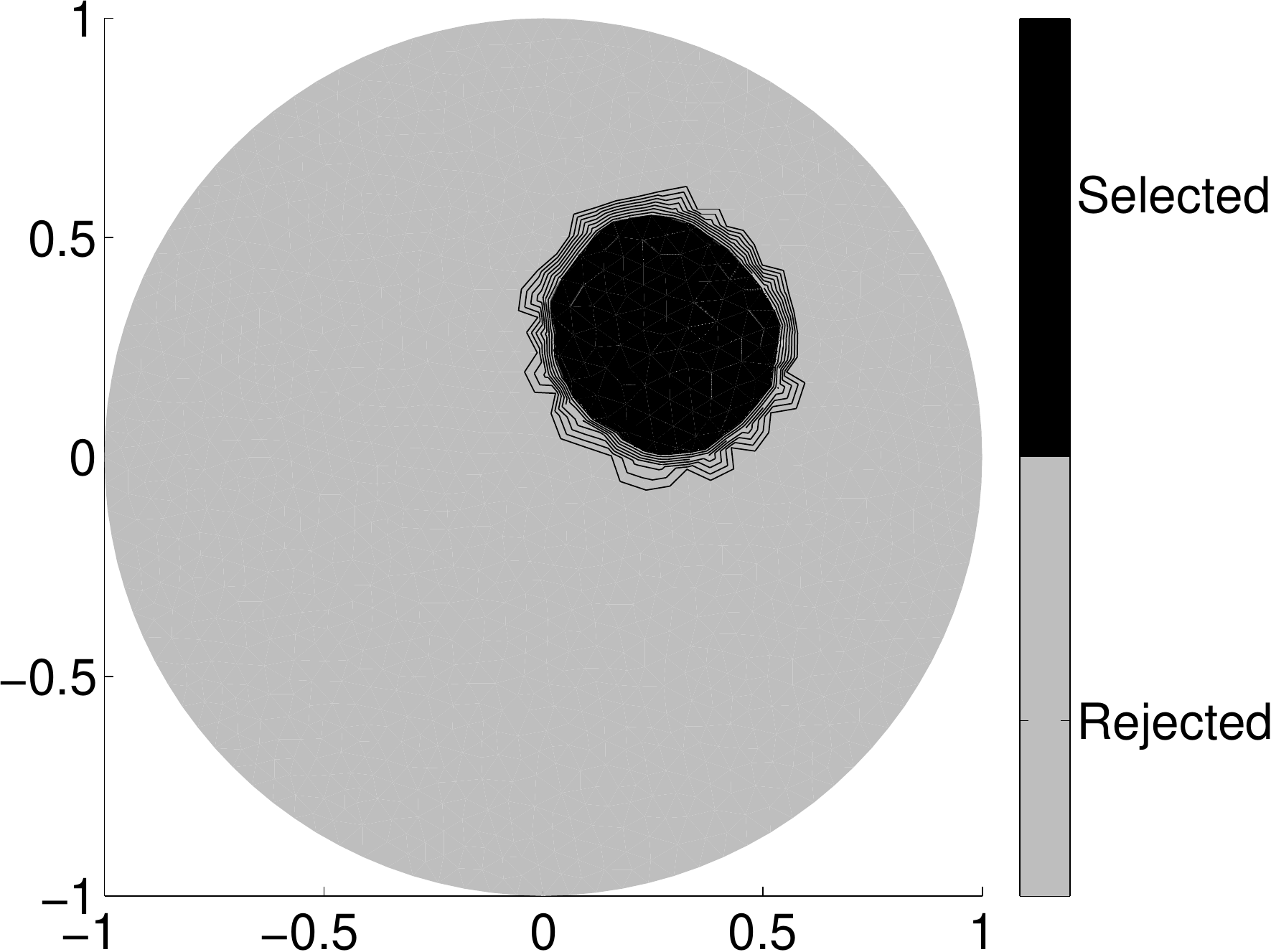}}\label{fig:selection.20}}
\subfloat[Final index ($n_{p_{\text{End}}}$)]{\makebox[.45\linewidth]{\includegraphics[width=.4\linewidth]{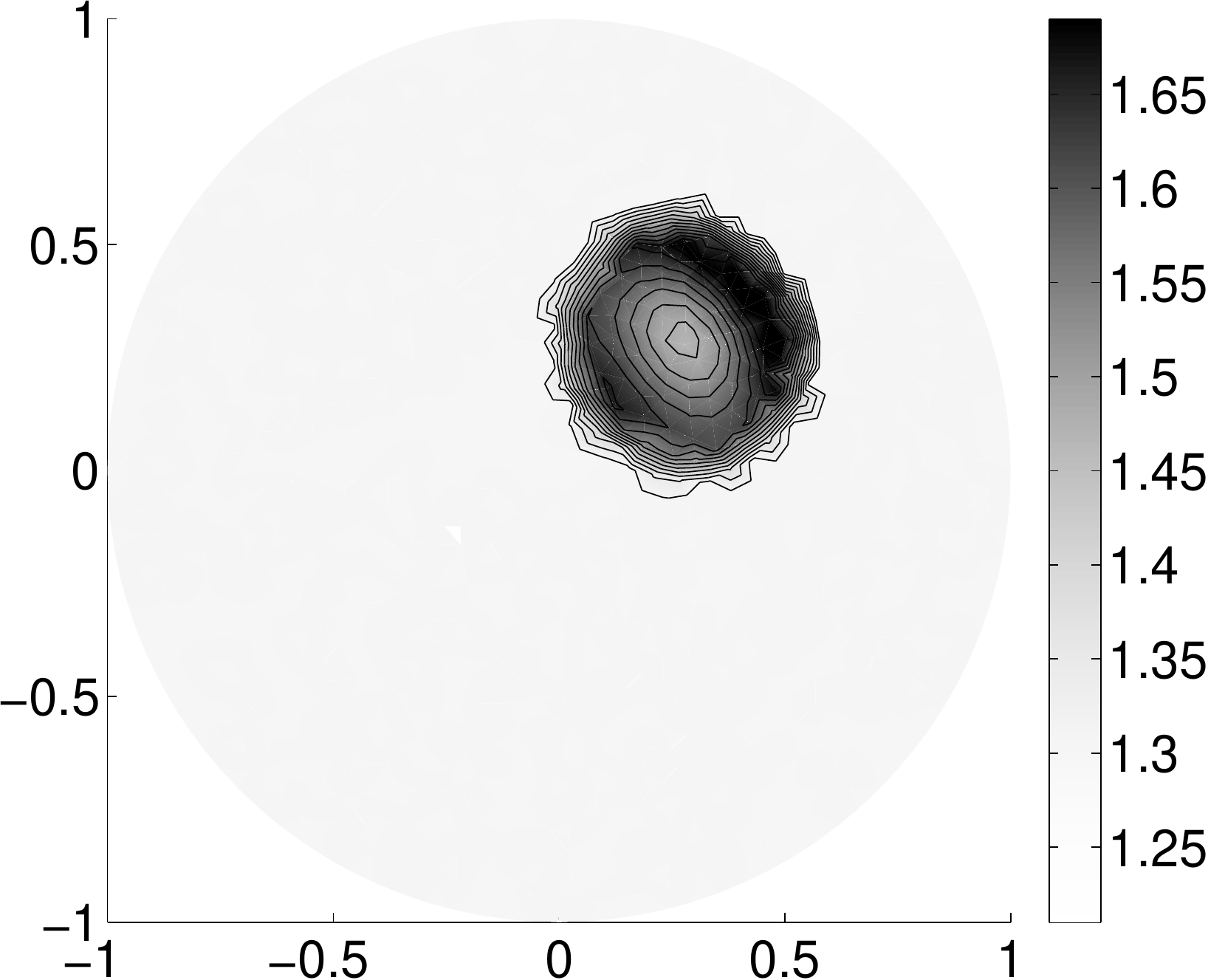}}}
$$\textrm{Selection threshold }\mathcal T = 20\%$$\hrule

\subfloat[Selected parameters ($N_{Sel} = 127$)]{\makebox[.45\linewidth]{\includegraphics[width=.4\linewidth]{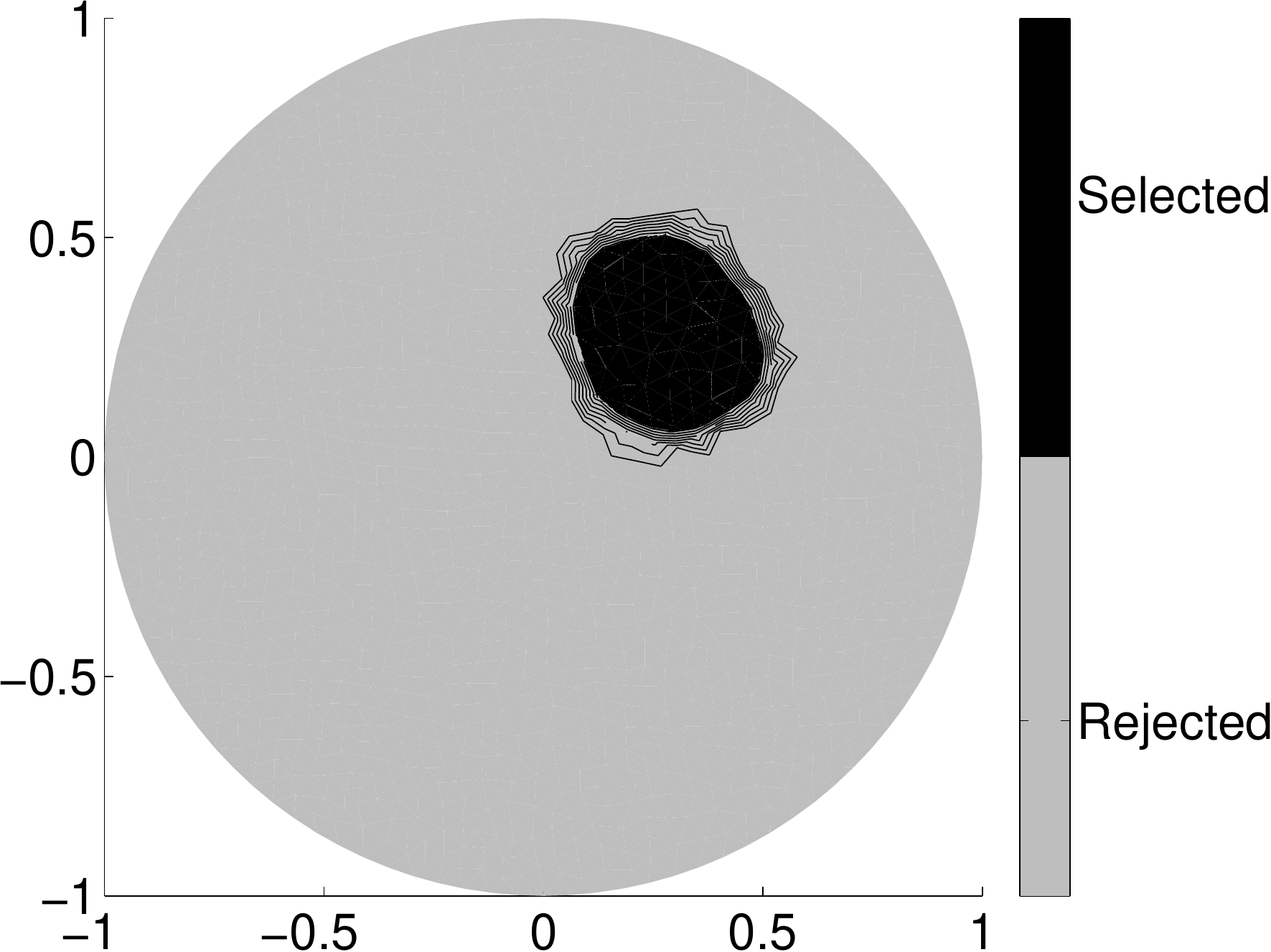}}\label{fig:selection.30}}
\subfloat[Final index  ($n_{p_{\text{End}}}$)]{\makebox[.44\linewidth]{\includegraphics[width=.4\linewidth]{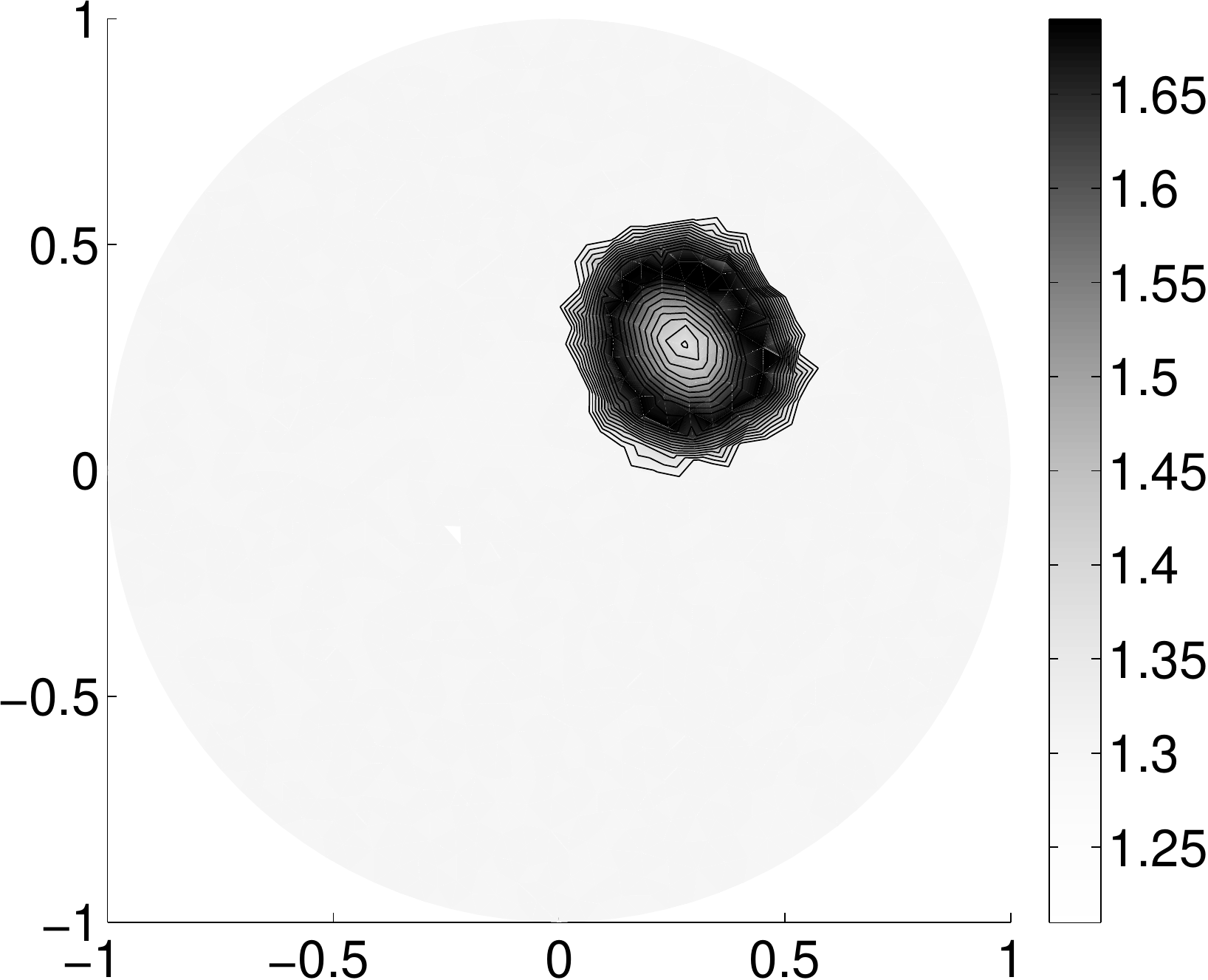}}}
$$\textrm{Selection threshold }\mathcal T = 30\%$$\hrule
\caption{Selective reconstruction with $30 \times 30$ data and $\varepsilon = 2\%$ noise}
\label{fig:reconstruction.selective}
\end{figure}

\paragraph{Remarks}
We can  also see in Table~\ref{tab:10} that the relative error can be lower than what was obtained through a full Gauss-Newton reconstruction over a set of various configurations.
This is a consequence of the fact that all the parameters outside the perturbation are equal to the exact value, while they can be miscalculated in the full reconstruction.
Identifying the unperturbed parameters can thus clearly enhance the reconstruction.
As previously, the stopping criterion was reached after four iterations in all cases.

\begin{table}[htbp]
\centering
\begin{tabular}{cc|*{3}{cc}}
\toprule

&  & \multicolumn{2}{c}{$15\times15$ data} & \multicolumn{2}{c}{$30\times30$ data} & \multicolumn{2}{c}{$60\times60$ data}\\ 
$\mathcal{T}$ & $\varepsilon$ & $N_{Sel}$ &  $e_{p_{\text{End}}}$ & $N_{Sel}$ &  $e_{p_{\text{End}}}$ & $N_{Sel}$ &  $e_{p_{\text{End}}}$\\\midrule

\multirow{3}{*}{10$\%$} 

 & \multirow{1}{*}{5$\%$} 
 & 874 & 4.0$\%$ & 739 & 3.3$\%$ & 633 & 2.8$\%$ \\ 
 & \multirow{1}{*}{2$\%$} 
 & 354 & 2.4$\%$ & 323 & 2.3$\%$ & 360 & 2.3$\%$ \\ 
 & \multirow{1}{*}{1$\%$} 
 & 305 & 2.3$\%$ & 282 & 2.4$\%$ & 296 & 2.3$\%$ \\ 
\midrule

\multirow{3}{*}{20$\%$} 

 & \multirow{1}{*}{5$\%$} 
 & 321 & 2.7$\%$ & 282 & 2.6$\%$ & 268 & 2.8$\%$ \\ 
 & \multirow{1}{*}{2$\%$} 
 & 196 & 3.5$\%$ & 181 & 3.7$\%$ & 203 & 3.3$\%$ \\ 
 & \multirow{1}{*}{1$\%$} 
 & 172 & 4.1$\%$ & 162 & 4.3$\%$ & 171 & 4.0$\%$ \\ 
\midrule

\multirow{3}{*}{30$\%$} 

 & \multirow{1}{*}{5$\%$} 
 & 204 & 3.3$\%$ & 181 & 3.7$\%$ & 178 & 3.9$\%$ \\ 
 & \multirow{1}{*}{2$\%$} 
 & 134 & 5.4$\%$ & 125 & 5.7$\%$ & 136 & 5.3$\%$ \\ 
 & \multirow{1}{*}{1$\%$} 
 & 120 & 5.8$\%$ & 112 & 5.9$\%$ & 115 & 5.8$\%$ \\ 
\bottomrule 

\end{tabular} 
\caption{\label{tab:10} Selective reconstruction} 
\end{table}

However, a threshold of $\mathcal{T} = 20\%$ seems too high, as the 181 selected zones do not completely cover the perturbation's support, resulting in a slightly flawed reconstruction.
More precisely, the relative error obtained as a function of $\mathcal{T}$ with  $30 \times 30$ data  can be seen in Figure~\ref{fig:erreur.tau}.
Clearly, there is an optimal value for $\mathcal{T}$ around $10\%$ when the noise ratio is kept low.

\begin{figure}[htbp]
\centering
\includegraphics[width=.5\linewidth]{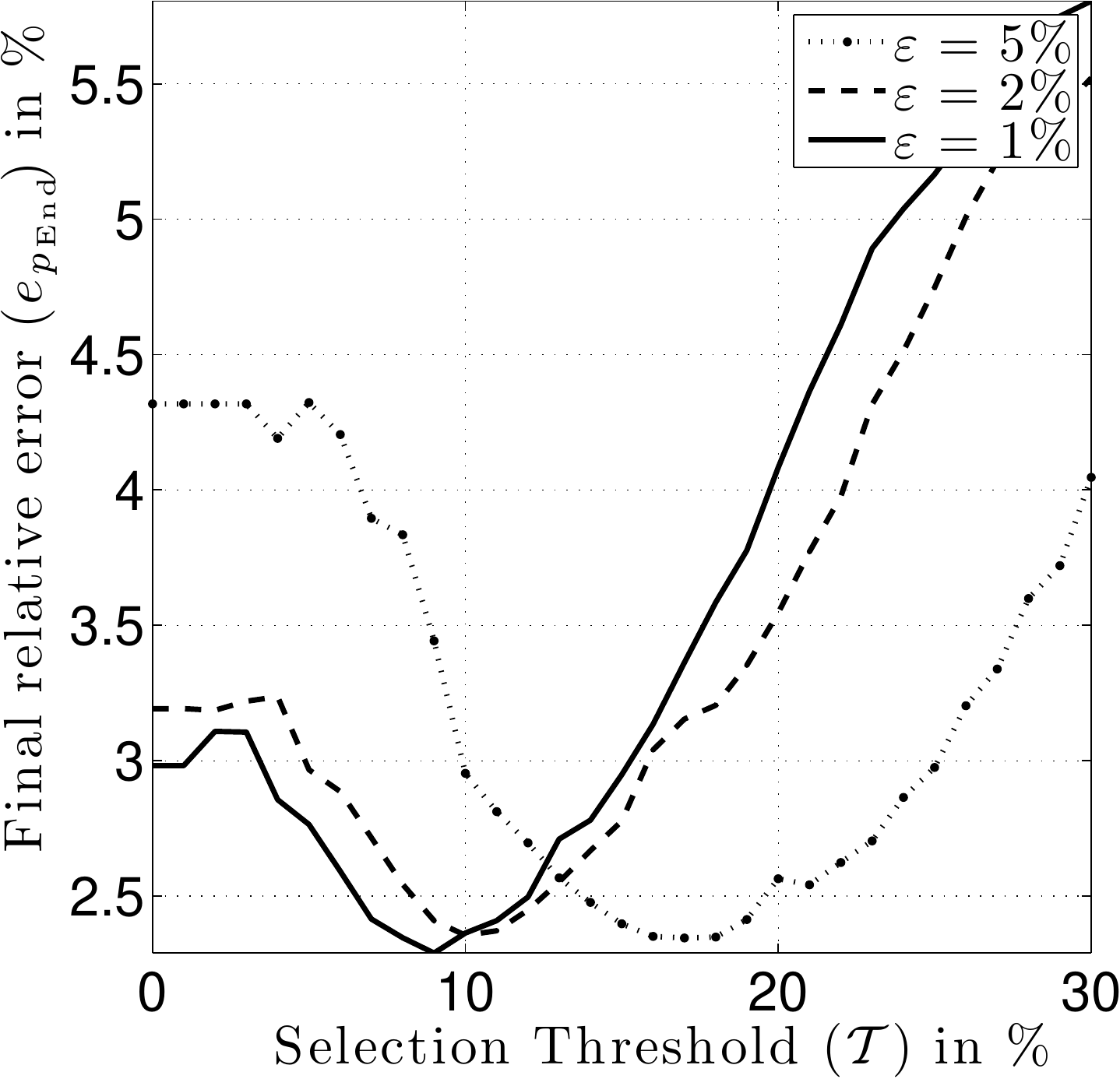}
\caption{Influence of the threshold $\mathcal T$ with $30 \times 30$ data and different noise levels $\varepsilon$}
\label{fig:erreur.tau}
\end{figure}

Besides,  with more noise ($5 \%$), we see in Figure~\ref{fig:erreur.tau} that the optimal $\mathcal T$ is shifted towards $20\%$.
Furthermore, we see that a good estimation of this threshold becomes even more important when the noise level grows.
This brings up the problem of how to select a correct threshold, taking  at least  the measurements noise and the amount of data into account.
Unfortunately, for the moment, we do not have a realistic indicator to tell if the selected threshold is acceptable.

\subsection{Adaptive refinement}\label{sec:refinement}

As stated in section~\ref{sec:reconstruction}, we use a reconstruction mesh that is different from the one used to generate the data.
Hence, the supports of the basis functions used in the reconstruction will not follow the geometry of  $n\etoile$, especially with a low number $N$ of basis functions.
Thus, we propose to iteratively refine the reconstruction mesh with help of the previously introduced defect localization, in order to provide a satisfying approximation of the unknown index with a small number of parameters.
The refinement outline is presented in Algorithm~\ref{algo:refinement}.

\begin{algorithm}[htbp]
\KwIn{$n_0 \in L^2(D)$}
$p \leftarrow 0$\;
\Repeat{
$N>N_{\max}$ or each $Z_i$ contains less than 16 mesh elements
}{
$\mathcal S_i \leftarrow \max_{Z_i}\mathcal{S}_{\{n_p,n\etoile\}}(x)$\;
$I \leftarrow \{i$ such that $Z_i$ contains more than 16 mesh elements$\}$\;
$i_{\text{Split}} \leftarrow i$ such that $\mathcal S_{i_{\text{Split}}} = \max_{i \in I} \mathcal S_i$\;
Update the set of zones by splitting $Z_{i_{\text{Split}}}$ into four sub-zones\;
Update the set of parameters accordingly by duplicating $\eta_{i_{\text{Split}}}$ three times\;
$N \leftarrow N+3$\;
$n_{p+p_{\text{End}}} \leftarrow$ \textbf{Algorithm~\ref{algo:gnr}}$\mathbf{(n_p)}$\;
$p \leftarrow p+p_\text{End}$\;
}\KwOut{$n_{p_{\text{End}}}$}
\caption{Adaptive refinement}
\label{algo:refinement}
\end{algorithm}

The number of 16 mesh elements is taken so that, after the splitting, each zone has still more than four mesh elements, which is the lower limit for defects to be relevant, as specified in Remark~\ref{rmk:localization}.

\subsubsection*{Numerical example}

\paragraph{Set-up} We illustrate our adaptive refinement in Figure~\ref{fig:reconstruction.adaptative} in the same conditions as  in section~\ref{sec:selection}.

\paragraph{Results}
The steps 3 (defect localization) and 9 (reconstruction on the refined set) of Algorithm~\ref{algo:refinement} are illustrated alternately in Figures~\ref{fig:reconstruction.adaptative.2}--\ref{fig:reconstruction.adaptative.7},
 and it can be seen how the reconstruction focuses on the support of the contrasting perturbation.
Figure~\ref{fig:reconstruction.adaptative.10} represents the values of $n_{59}$, which is obtained with $N=76$ basis functions chosen during 25 successive adaptive refinements.
Also, the relative error $e_p$, obtained in step 10 of the algorithm, is plotted in Figure~\ref{fig:reconstruction.adaptative.11} as a function of $p$.

\paragraph{Remarks}
First, it can be noted that each refinement adds 3 parameters to be reconstructed and that each call to Algorithm~\ref{algo:gnr} generates about four iterations (see Tables~\ref{tab:9}-\ref{tab:10}).
So,  the number of iterations is comparable to the number of parameters.

Then, comparing with the results obtained when using basis functions that are placed  randomly, summarized in Table~\ref{tab:9} or in Figure~\ref{fig:reference.erreur}, we can see  lower reconstruction errors when using our guided adaptive refinement.
In this example, our results are even comparable to the complete reconstruction (Algorithm~\ref{algo:gnr}) performed with 20 times more parameters.
We thus obtain a satisfactory reconstruction with a limited number of well-chosen basis functions.

Finally, as we can  see in Table~\ref{tab:11}, the sensitivities to noise or data amount in  this example are similar to what we observe in section~\ref{sec:selection}.
Note that the number of total iterations ${p_{\text{end}}}$ is now quite high, since each loop in Algorithm~\ref{algo:refinement} computes an iterative reconstruction.
However each of those reconstructions is conducted on a very small number of parameters.
A suitably optimized algorithm might thus be able to balance the higher number of iterations.

\begin{figure}[htbp]
\centering
\subfloat[Loop 1, step 3 (defect localization)]{\makebox[.45\linewidth]{\includegraphics[width=.34\linewidth]{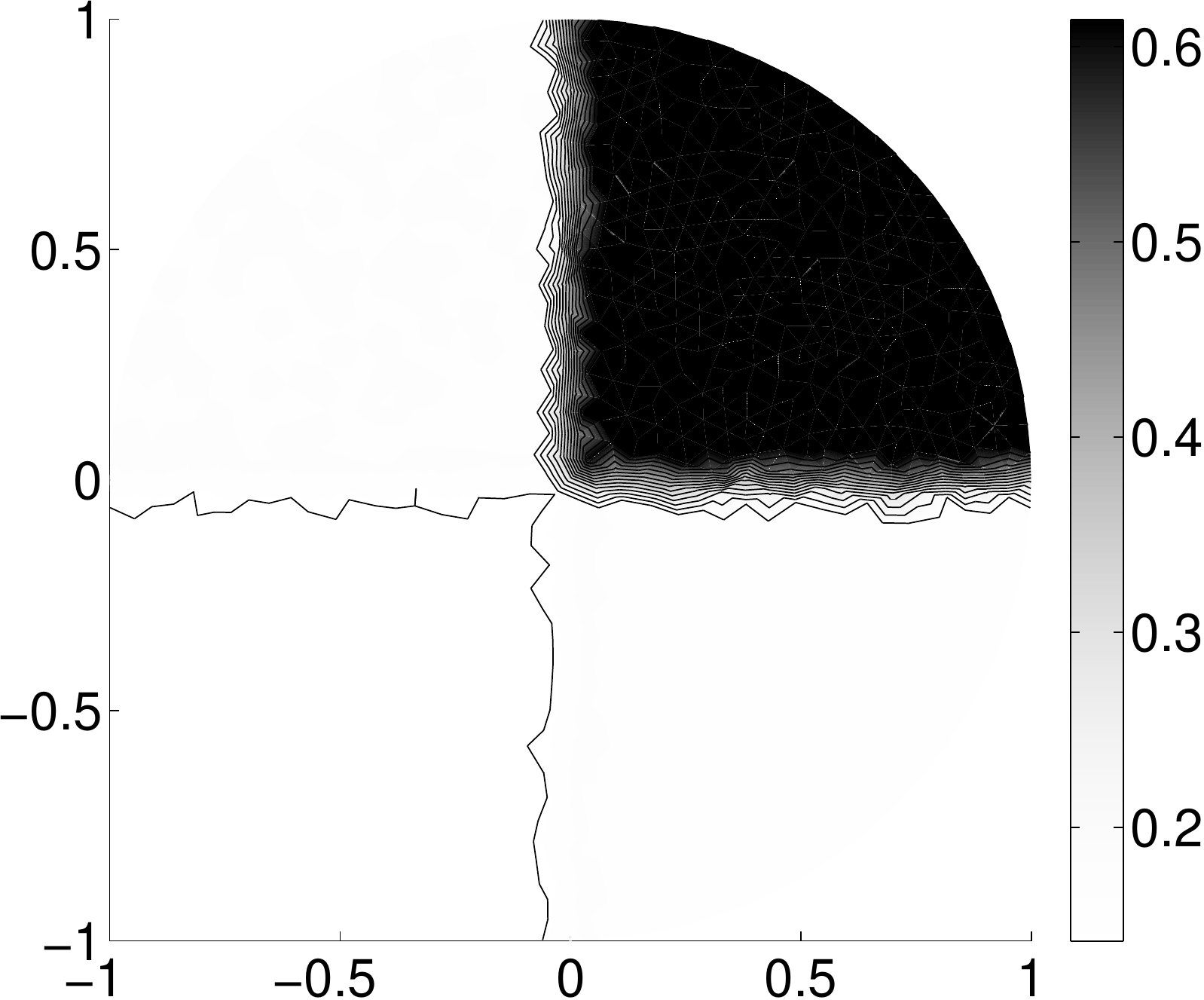}}
\label{fig:reconstruction.adaptative.2}}
\subfloat[Loop 1, step 9 (refined reconstruction)]{\makebox[.45\linewidth]{\includegraphics[width=.34\linewidth]{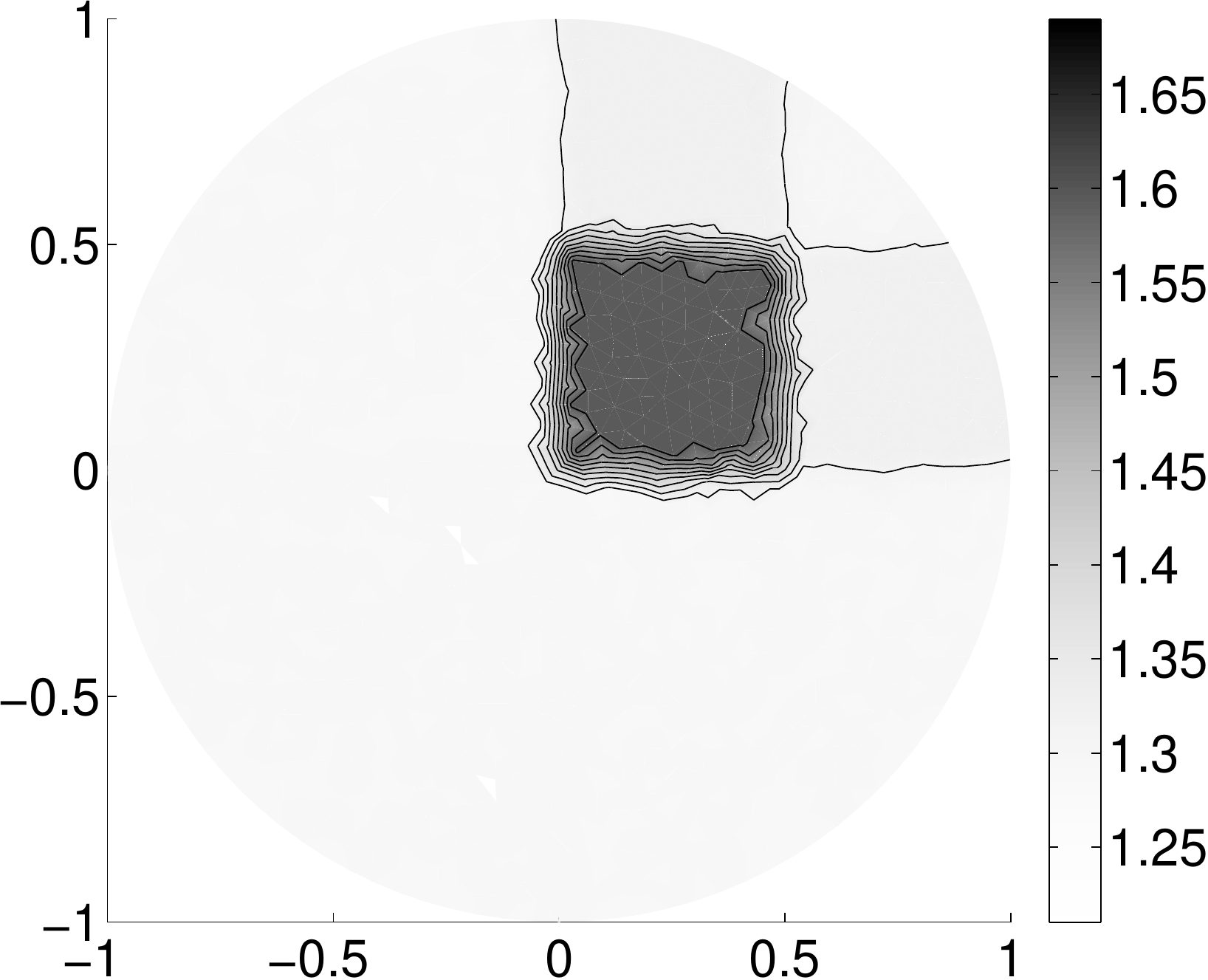}}}\\
\subfloat[Loop 2, step 3 (defect localization)]{\makebox[.45\linewidth]{\includegraphics[width=.34\linewidth]{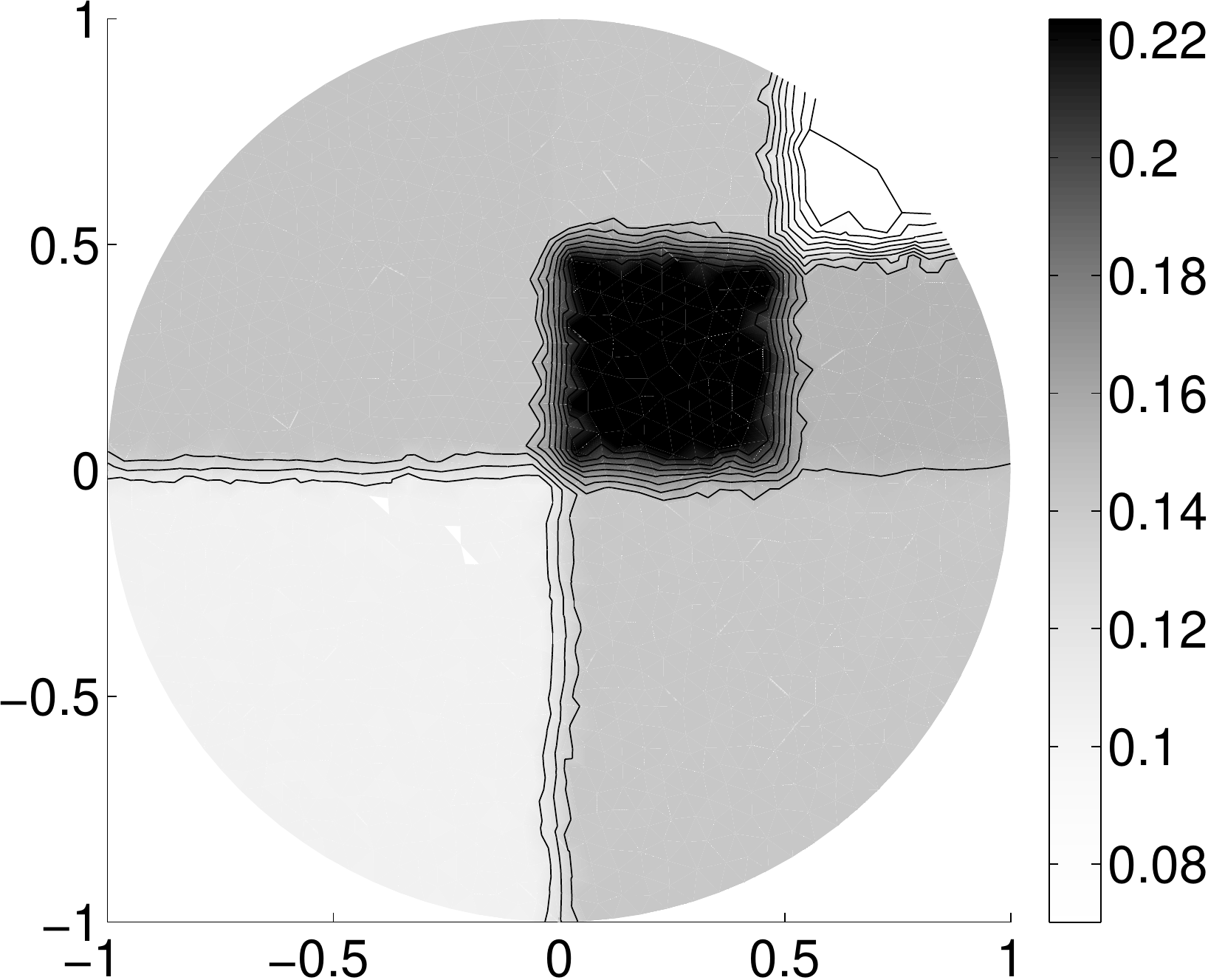}}}
\subfloat[Loop 2, step 9 (refined reconstruction)]{\makebox[.45\linewidth]{\includegraphics[width=.34\linewidth]{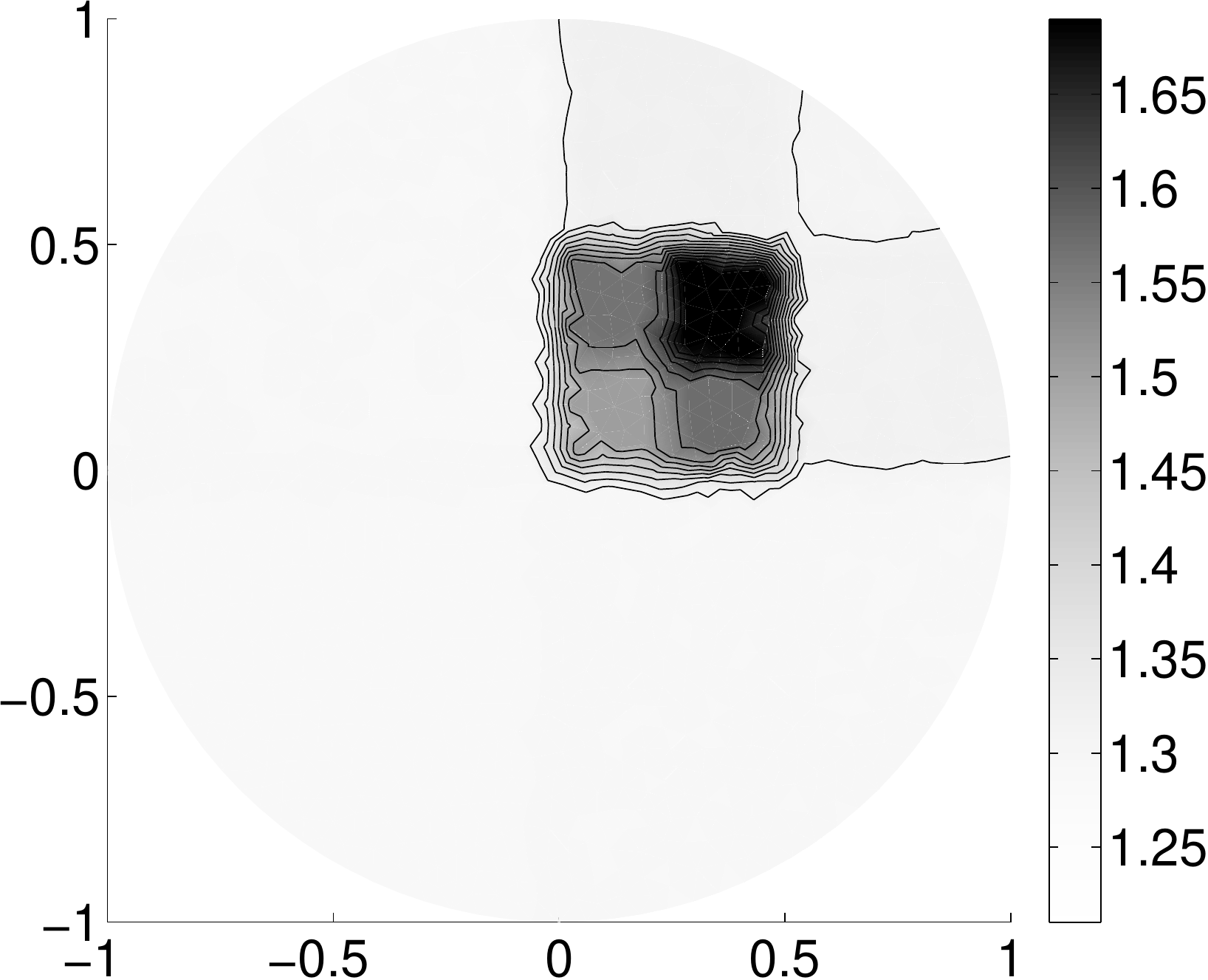}}}\\
\subfloat[Loop 3, step 3 (defect localization)]{\makebox[.45\linewidth]{\includegraphics[width=.34\linewidth]{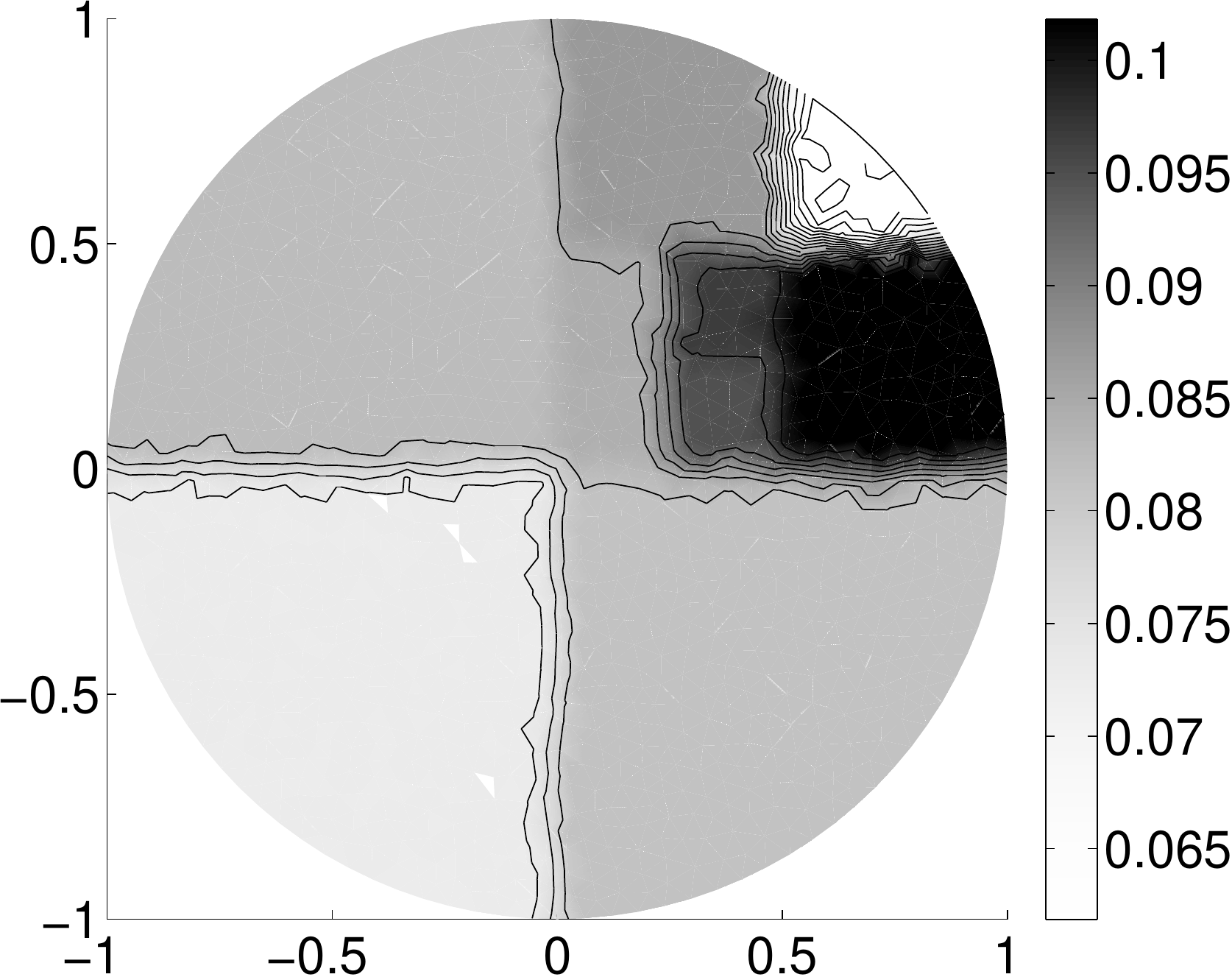}}}
\subfloat[Loop 3, step 9 (refined reconstruction)]{\makebox[.45\linewidth]{\includegraphics[width=.34\linewidth]{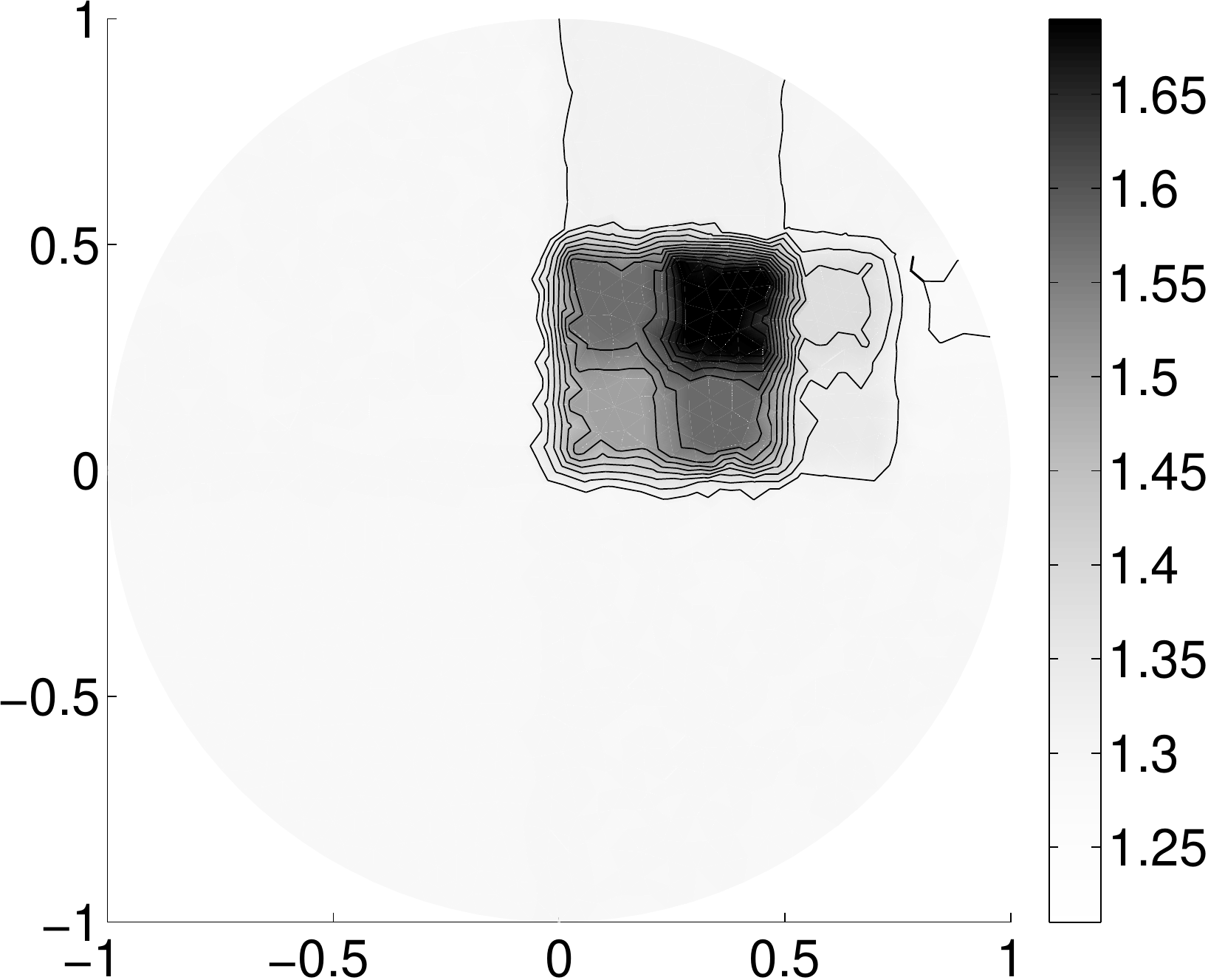}}\label{fig:reconstruction.adaptative.7}}\\
\subfloat[Loop 25 (final), step 9]{\makebox[.45\linewidth]{\includegraphics[width=.34\linewidth]{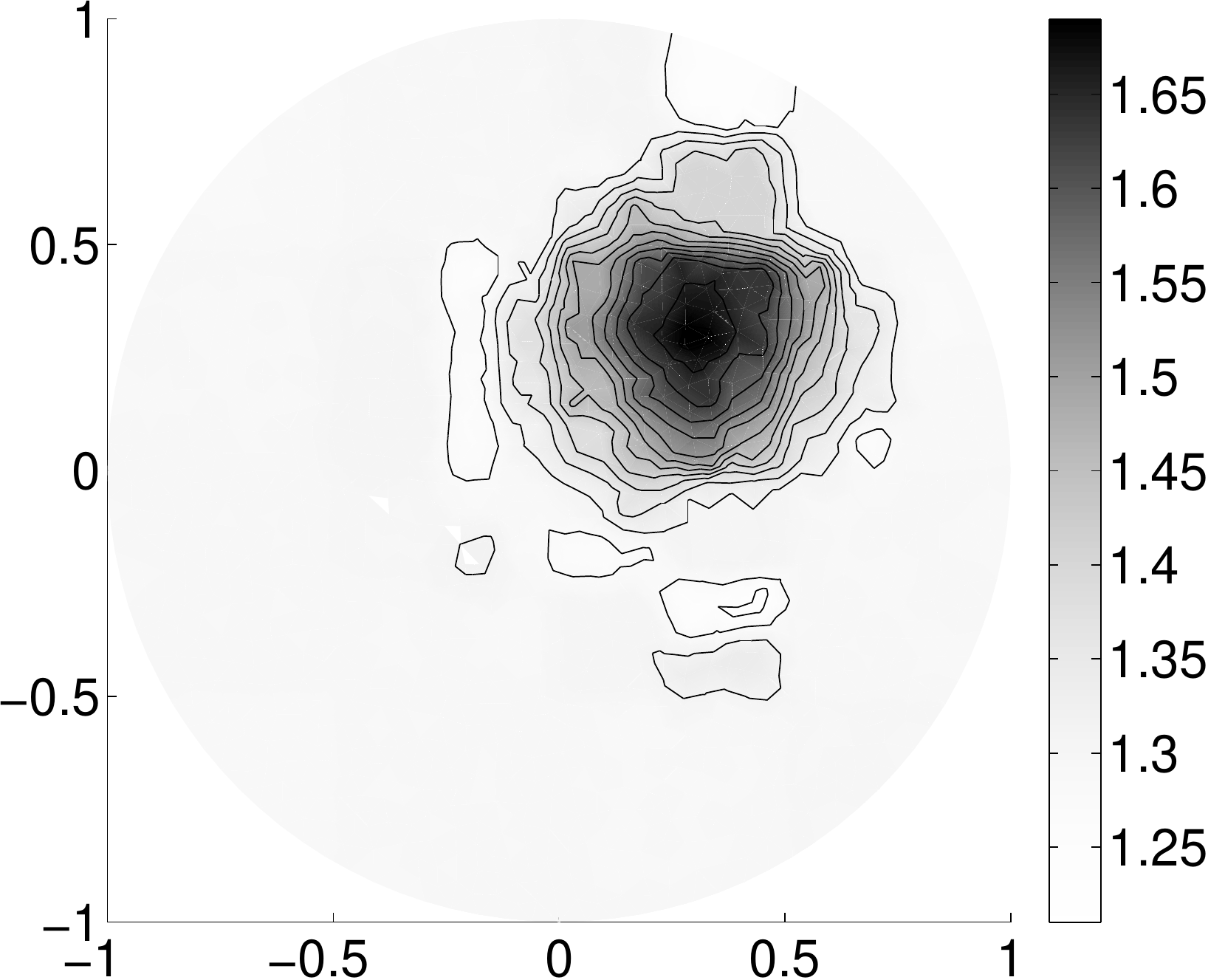}}\label{fig:reconstruction.adaptative.10}}
\subfloat[Evolution of the relative error]{\makebox[.45\linewidth]{\includegraphics[width=.34\linewidth]{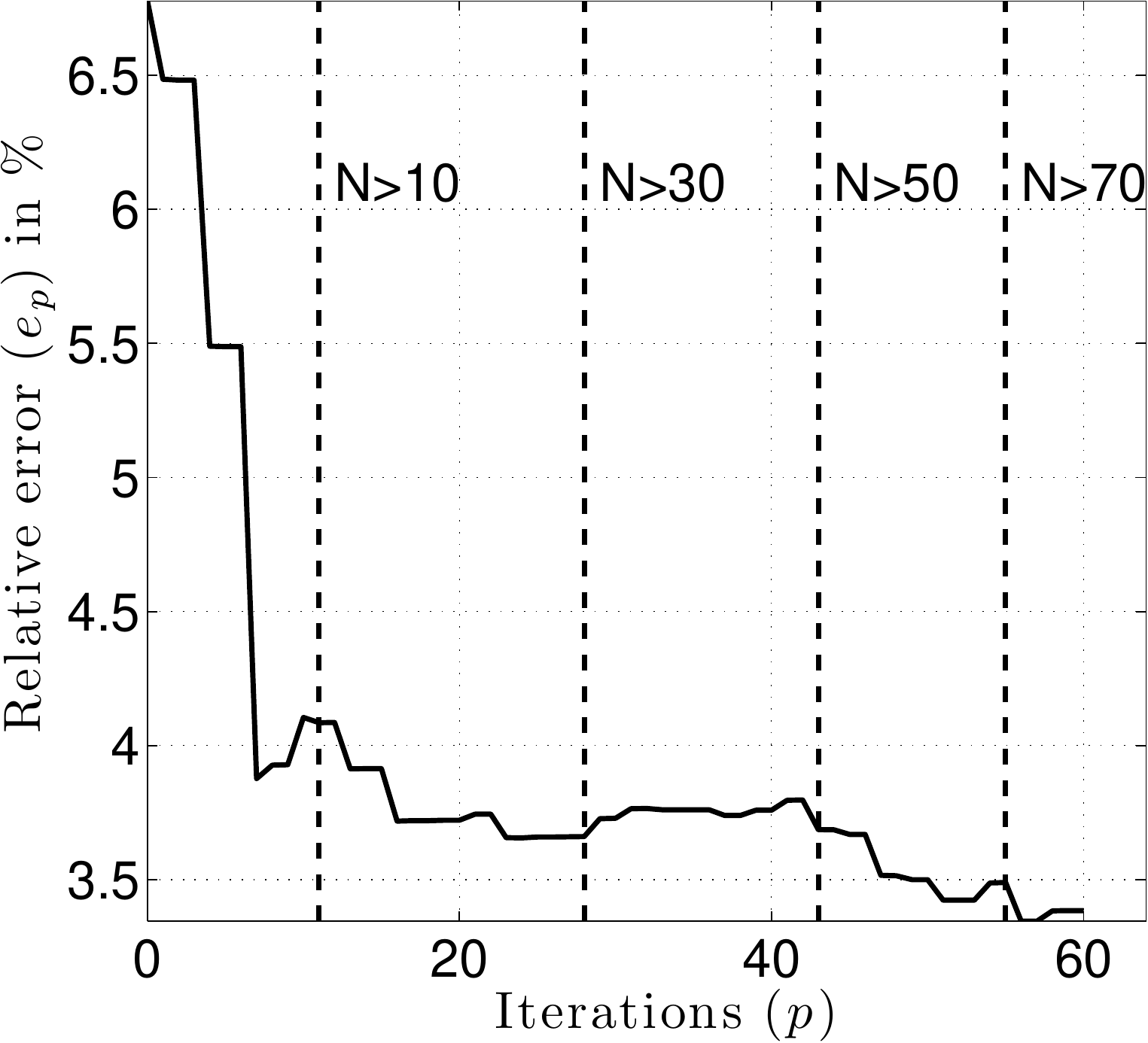}}\label{fig:reconstruction.adaptative.11}}
\caption{Adaptive refinement (Algorithm~\ref{algo:refinement}) with $30 \times 30$ data and $\varepsilon = 2\%$ noise}
\label{fig:reconstruction.adaptative}
\end{figure}

\begin{table}[htbp]
\centering
\begin{tabular}{c|*{3}{cc}}
\toprule

 & \multicolumn{2}{c}{$15\times15$ data} & \multicolumn{2}{c}{$30\times30$ data} & \multicolumn{2}{c}{$60\times60$ data}\\ 
$\varepsilon$ & $N$ &  $e_{p_{\text{End}}}$ & $N$ &  $e_{p_{\text{End}}}$ & $N$ &  $e_{p_{\text{End}}}$\\\midrule

\multirow{1}{*}{5$\%$} 

 & 76 & 4.9$\%$ & 76 & 6.0$\%$ & 76 & 5.0$\%$ \\

\multirow{1}{*}{2$\%$} 

 & 76 & 4.0$\%$ & 76 & 4.8$\%$ & 76 & 3.3$\%$ \\

\multirow{1}{*}{1$\%$} 

 & 76 & 3.8$\%$ & 76 & 4.4$\%$ & 76 & 3.6$\%$ \\ 
\bottomrule 

\end{tabular} 
\caption{\label{tab:11} Adaptive refinement} 
\end{table}

\section{Combining both strategies}\label{sec:selection.et.raffinement} 
The selective reconstruction is presented in section~\ref{sec:selection} as a preliminary step to the reconstruction.
Furthermore, the adaptive refinement described in section~\ref{sec:refinement} enhances the actual reconstruction step.
So, adaptive refinement and selective reconstruction can be used one after the other.
This extension of Algorithm~\ref{algo:selection} is described in Algorithm~\ref{algo:combined}.

\begin{algorithm}[htbp]
\KwIn{$n_0 \in L^2(D)$}
$\mathcal S_i \leftarrow \max_{Z_i}\mathcal{S}_{\{n_0,n\etoile\}}(x)$\;
$\Omega_\mathcal{T} \leftarrow$ the set of zones  on which $\mathcal S_i > \mathcal{T}\max(S_i)$\;
$n_{p_{\text{End}}} \leftarrow$ \textbf{Algorithm~\ref{algo:refinement}}$\mathbf{(\restriction{n_0}{\Omega_\mathcal{T}})}$ (all indices are extended by $n_0$ outside $\Omega_\mathcal{T}$)\;
\KwOut{$n_{p_{\text{End}}}$}
\caption{Selective reconstruction followed by adaptive refinement}
\label{algo:combined}
\end{algorithm}

Note that the number of  parameters selected in step 1 of this algorithm is not directly used in the adaptively refined reconstruction (step 3).
Indeed, the iterative refinement described in Algorithm~\ref{algo:refinement} starts the reconstruction with only one zone.
More precisely, the information retained from the selection step is the shape of the perturbation.
Note that the accuracy of this selection is important: 
This is what allows the adaptive refinement to focus on the reconstruction of the perturbation's inner geometry, instead of focusing on the contrast between the perturbation and the background.

\subsubsection*{Numerical example 1}

\paragraph{Set-up}
As in  section~\ref{sec:enhancements}, we illustrate Algorithm~\ref{algo:combined} with the selection thresholds $\mathcal{T} = 10\%$, $\mathcal{T} = 20\%$ and $\mathcal{T} = 30\%$.
The respective selected mesh elements can be seen in Figures~\ref{fig:selection.10}--\ref{fig:selection.30}.

\paragraph{Results}
Figures~\ref{fig:chain.1}--\ref{fig:chain.3} show the reconstructions after 2, 4 and final adaptive refinement loops with a threshold $\mathcal{T}=10\%$.
As expected through the previous results, the reconstruction is very good.
In fact, the exact values listed in Table~\ref{tab:12} show that this reconstruction reaches an accuracy comparable to the one obtained  through the initial selective reconstruction; the latter requiring 10 times more basis functions.
As in section~\ref{sec:refinement}, and for the same reasons, the number of parameters for each adaptively refined reconstruction is comparable to the number of iterations.

Similarly to the examples presented in section~\ref{sec:selection}, $\mathcal{T}\geqslant20\%$ also provides a too small selection, leading to a  flawed reconstruction.
It can be seen in Figures~\ref{fig:chain.4}--\ref{fig:chain.9} that the reconstruction tends to a crown shape.
The corresponding relative error values are presented in Figure~\ref{fig:chain.10} and detailed in Table~\ref{tab:12}.

\paragraph{Remark}
Since the selection is performed before the adaptive refinement, the choice of the threshold $\mathcal{T}$ still has a large influence in the final result.
However, results in terms of accuracy remain close to the reference listed in Table~\ref{tab:9}, but here involving between $0.6\%$ and $2\%$ of the total number of elements used in the full Gauss-Newton reconstruction.

\begin{figure}[htbp]
\centering
\subfloat[Second refinement]{\makebox[.3\linewidth]{\includegraphics[width=.25\linewidth]{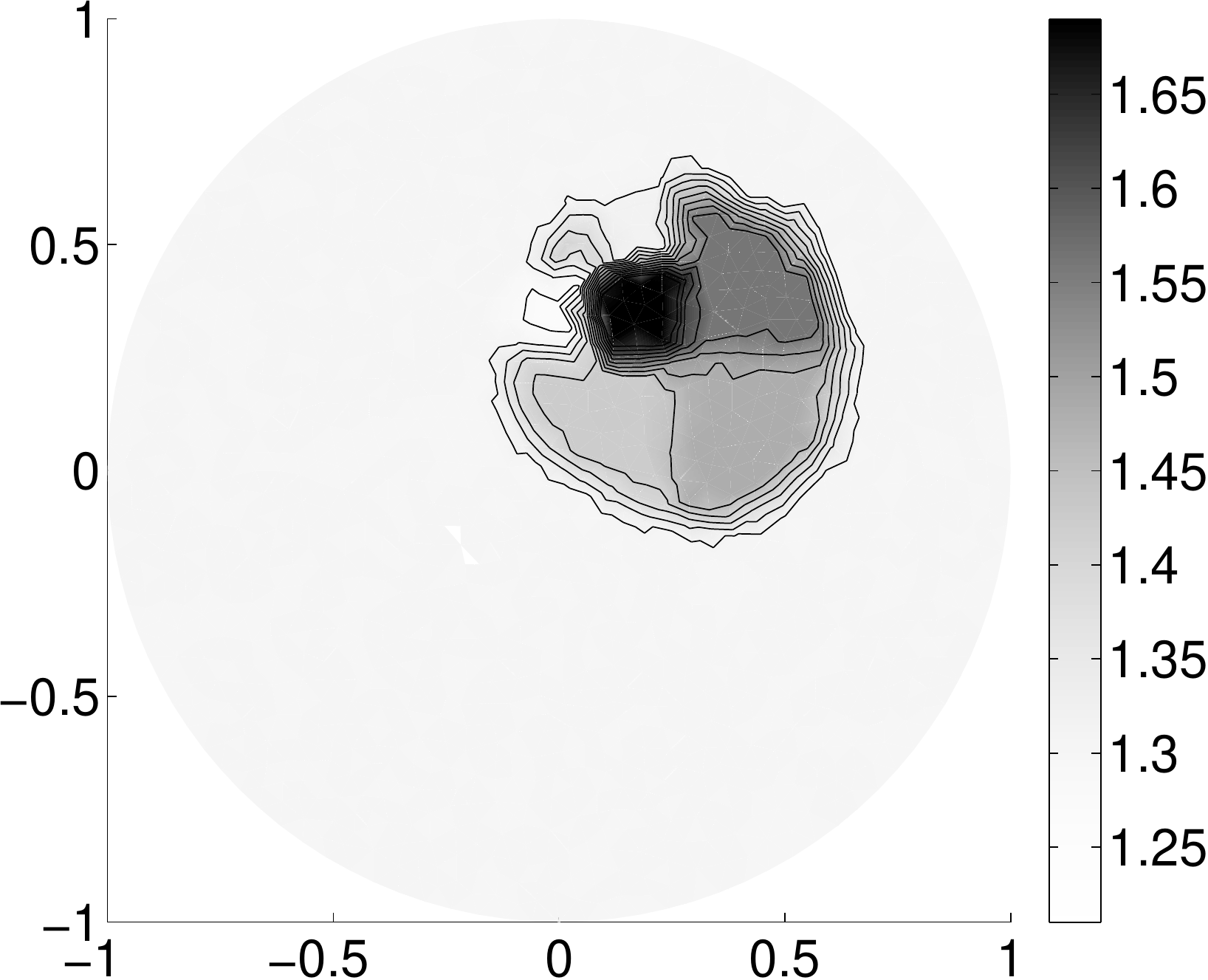}}\label{fig:chain.1}}
\subfloat[Fourth refinement]{\makebox[.3\linewidth]{\includegraphics[width=.25\linewidth]{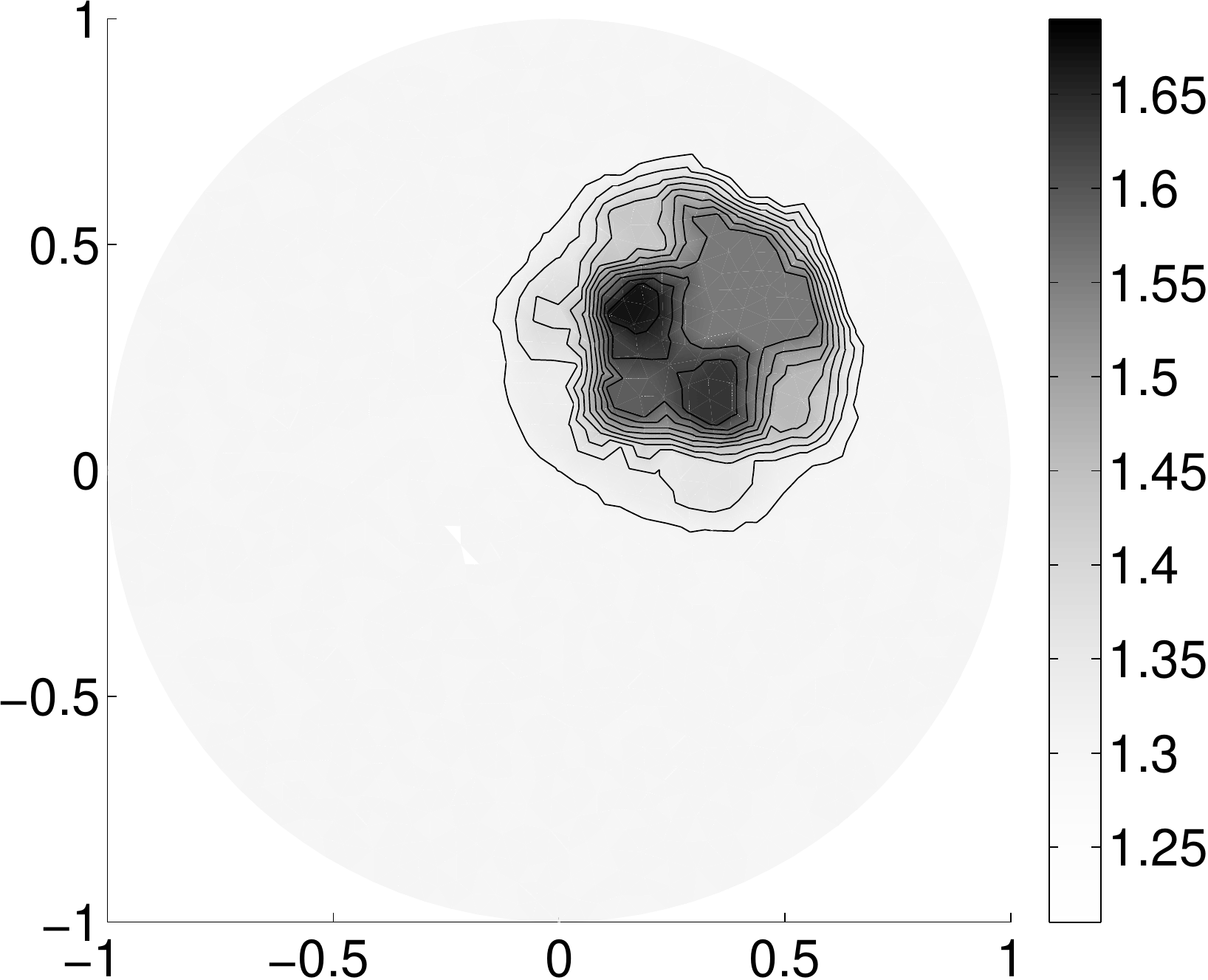}}\label{fig:chain.2}}
\subfloat[Last (17$^{th}$) refinement]{\makebox[.3\linewidth]{\includegraphics[width=.25\linewidth]{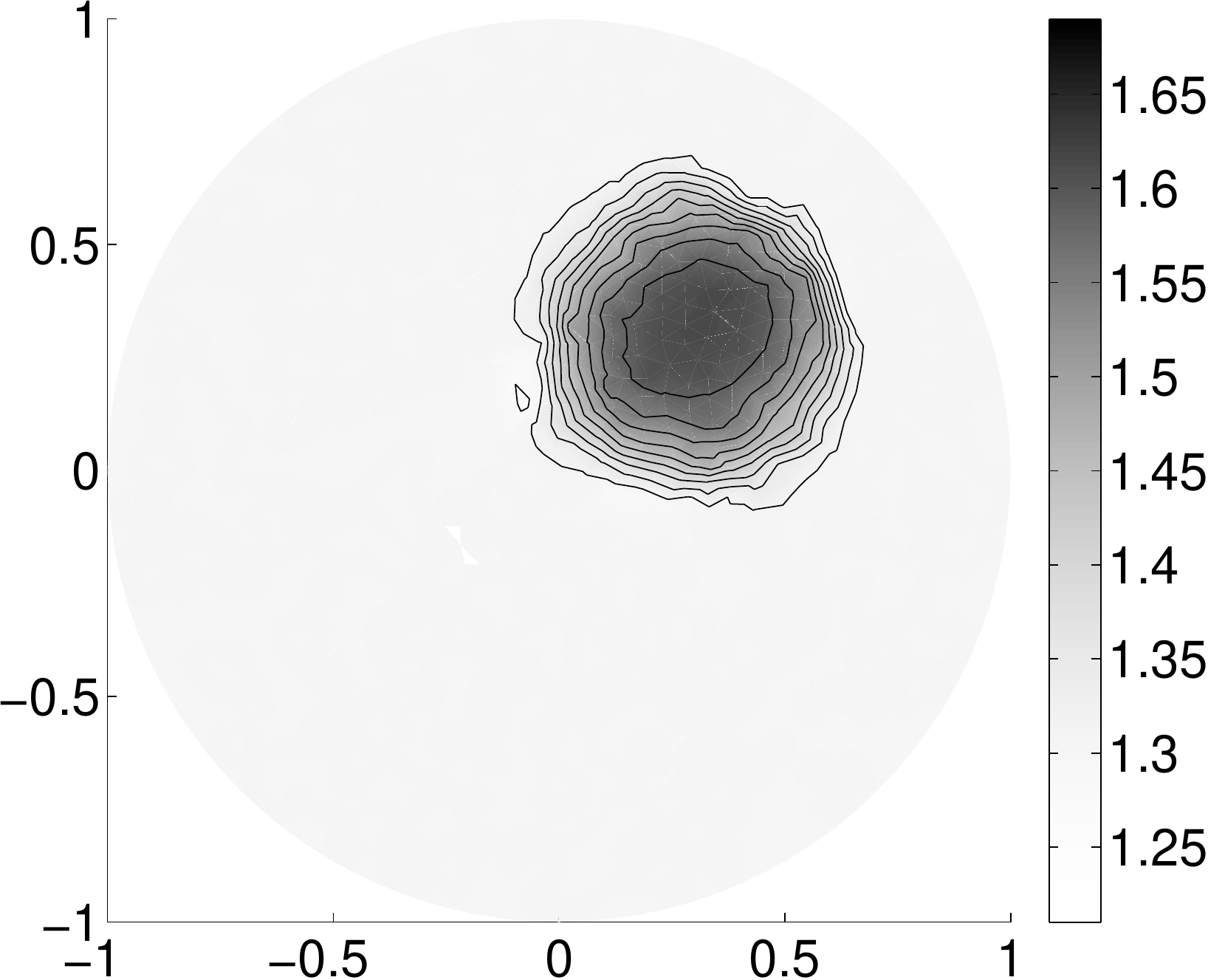}}\label{fig:chain.3}}
$$\textrm{Selection threshold }\mathcal T = 10\%$$\hrule
\subfloat[Second refinement]{\makebox[.3\linewidth]{\includegraphics[width=.25\linewidth]{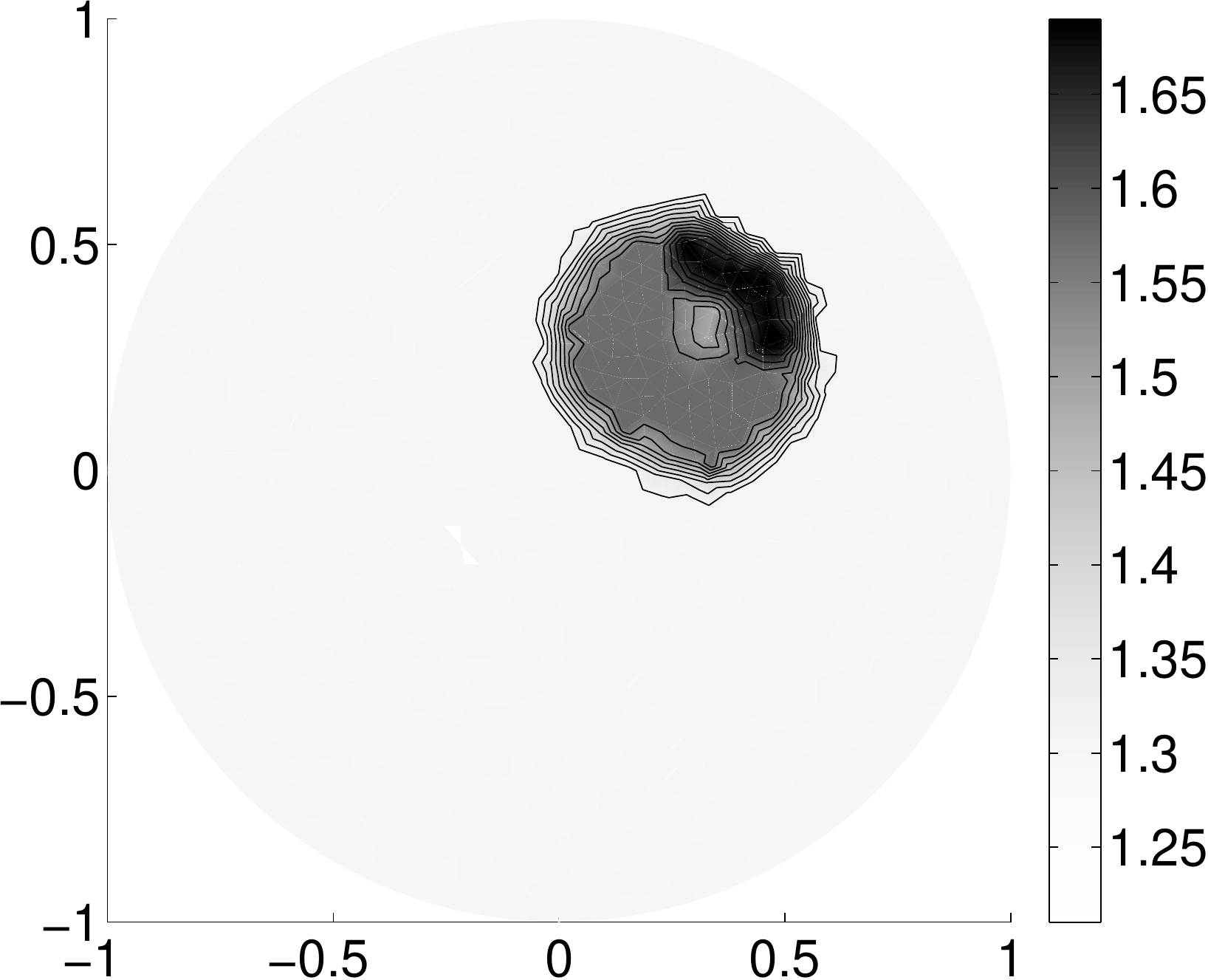}}\label{fig:chain.4}}
\subfloat[Fourth refinement]{\makebox[.3\linewidth]{\includegraphics[width=.25\linewidth]{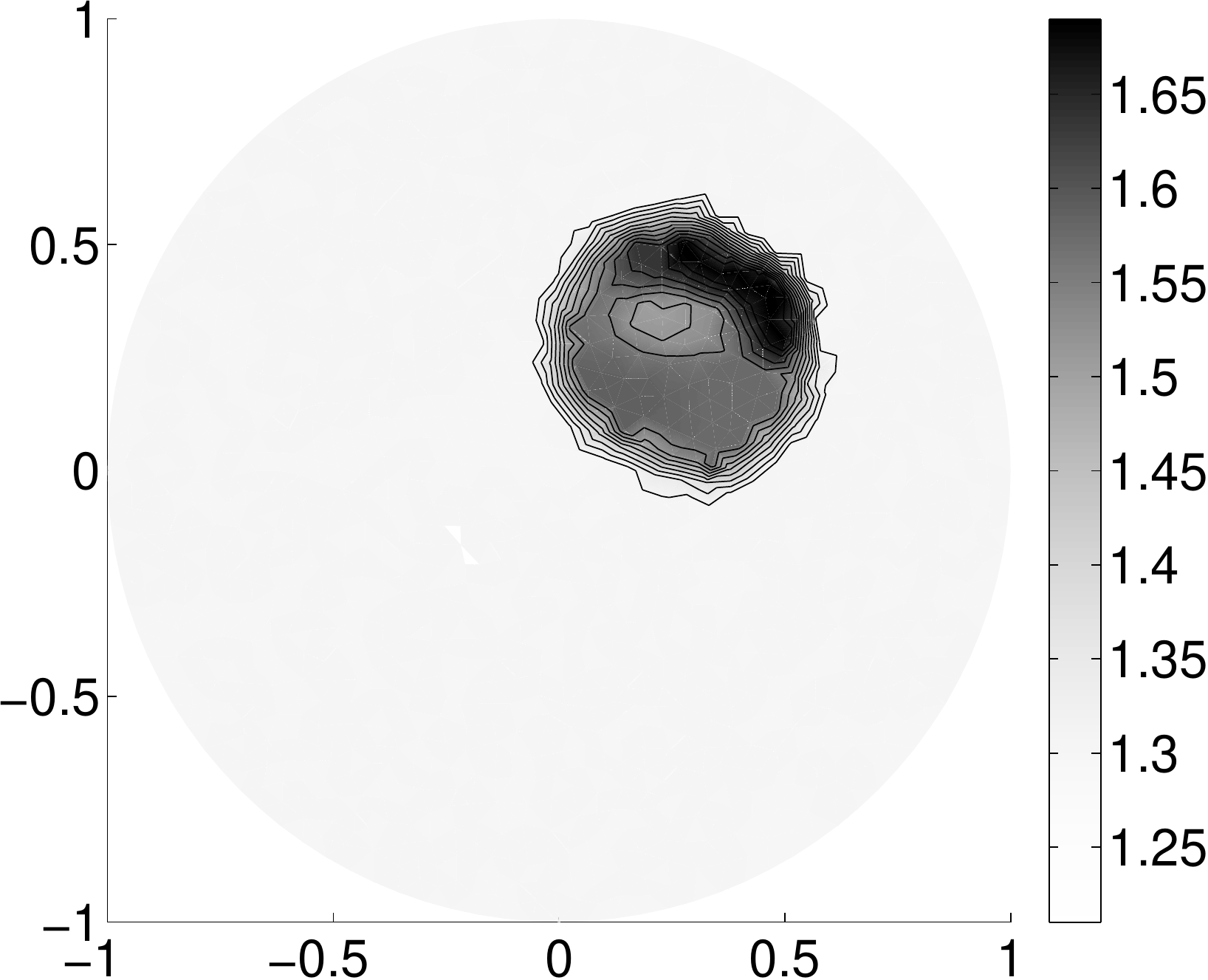}}\label{fig:chain.5}}
\subfloat[Last (8$^{th}$) refinement]{\makebox[.3\linewidth]{\includegraphics[width=.25\linewidth]{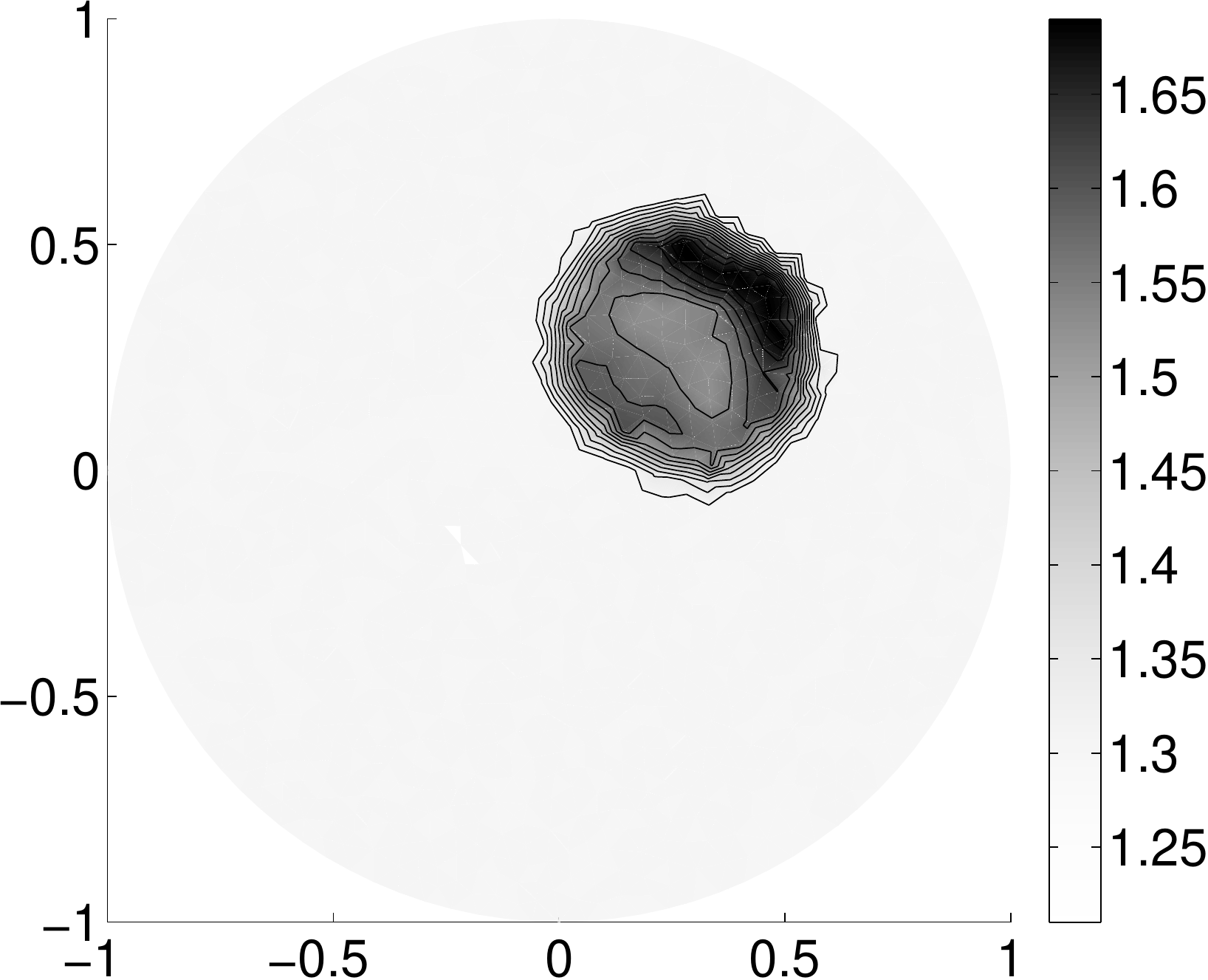}}\label{fig:chain.6}}
$$\textrm{Selection threshold }\mathcal T = 20\%$$\hrule
\subfloat[Second refinement]{\makebox[.3\linewidth]{\includegraphics[width=.25\linewidth]{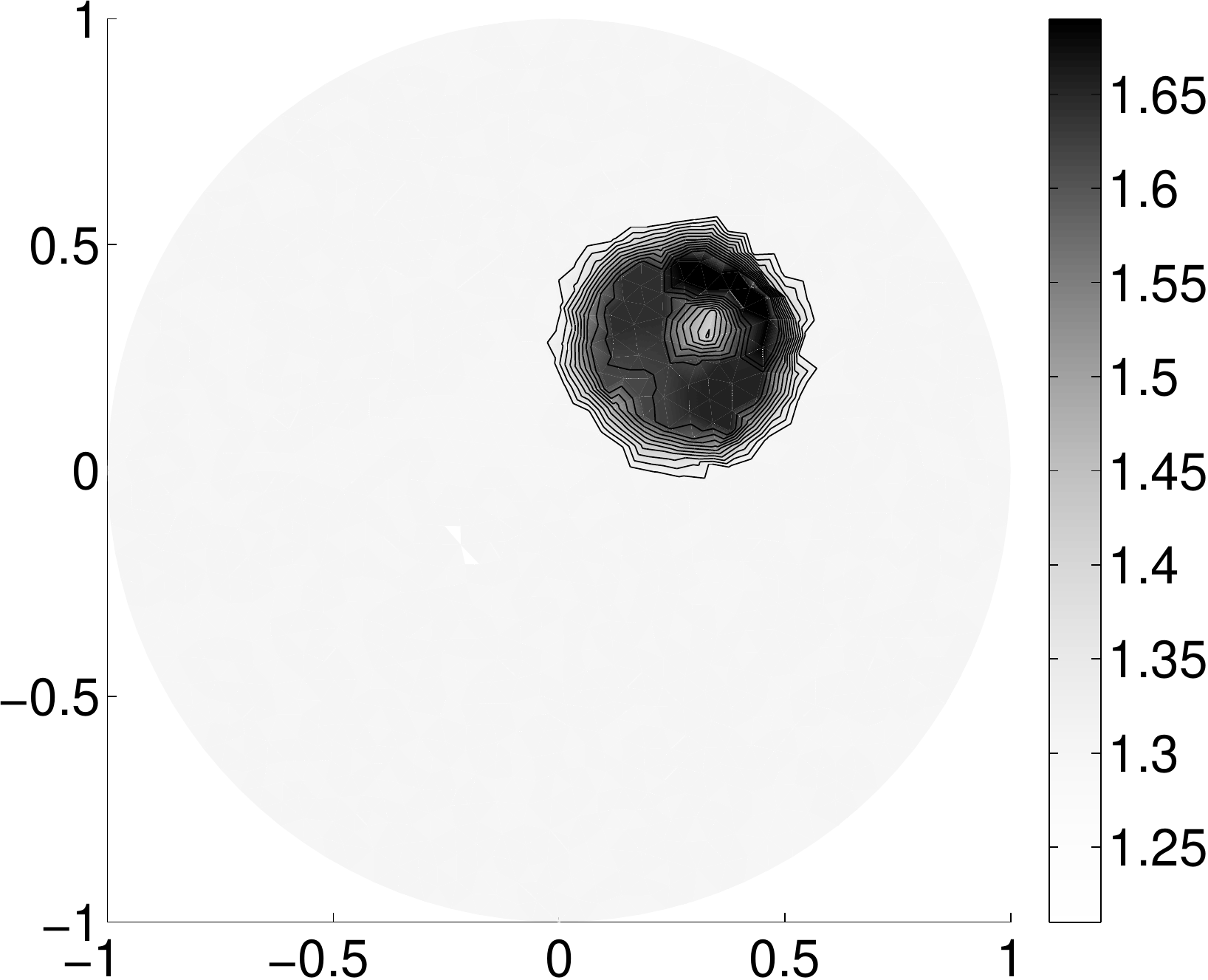}}\label{fig:chain.7}}
\subfloat[Fourth refinement]{\makebox[.3\linewidth]{\includegraphics[width=.25\linewidth]{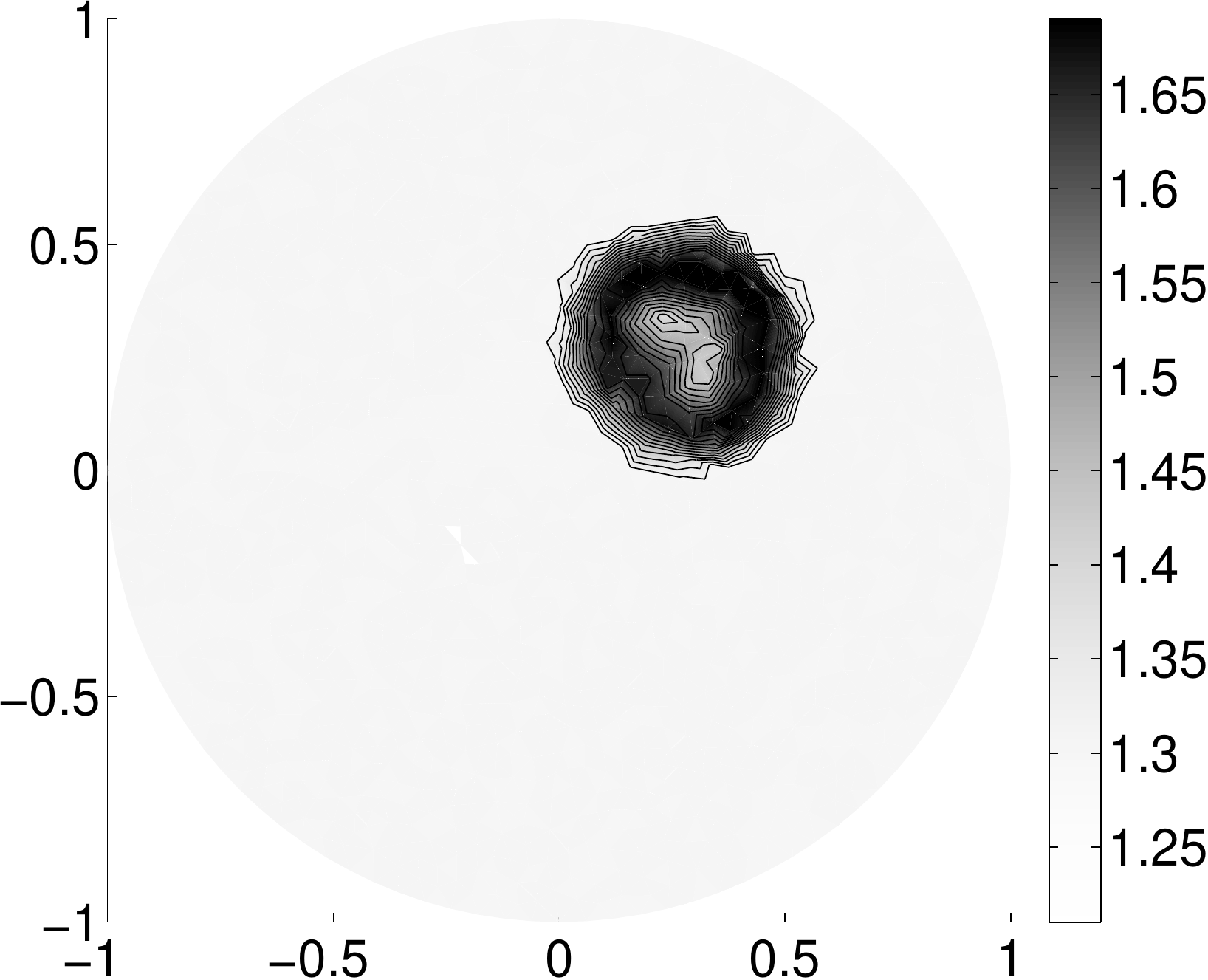}}\label{fig:chain.8}}
\subfloat[Last (5$^{th}$) refinement]{\makebox[.3\linewidth]{\includegraphics[width=.25\linewidth]{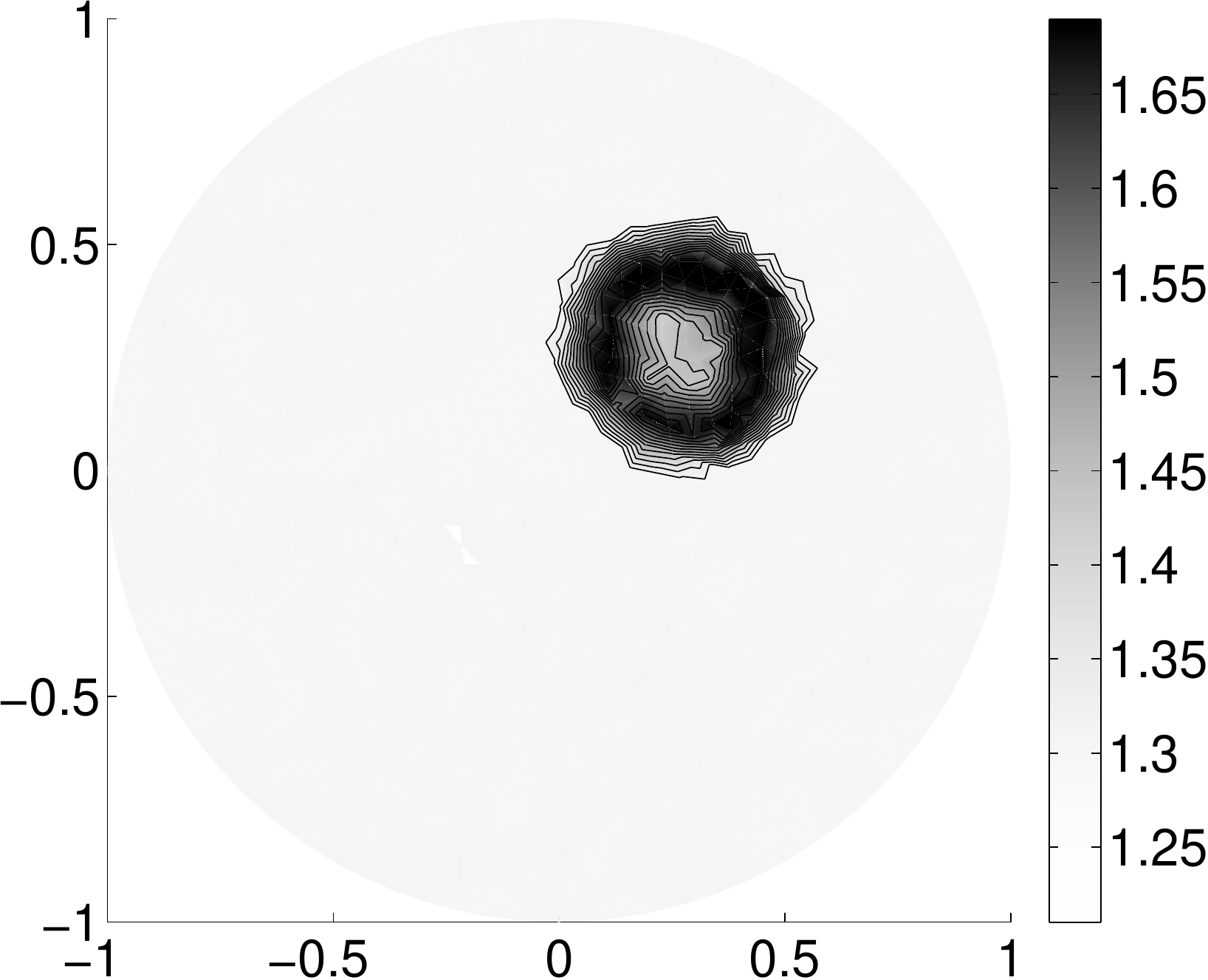}}\label{fig:chain.9}}
$$\textrm{Selection threshold }\mathcal T = 30\%$$\hrule
\subfloat[Evolution of the relative error]{\makebox[.4\linewidth]{\includegraphics[width=.4\linewidth]{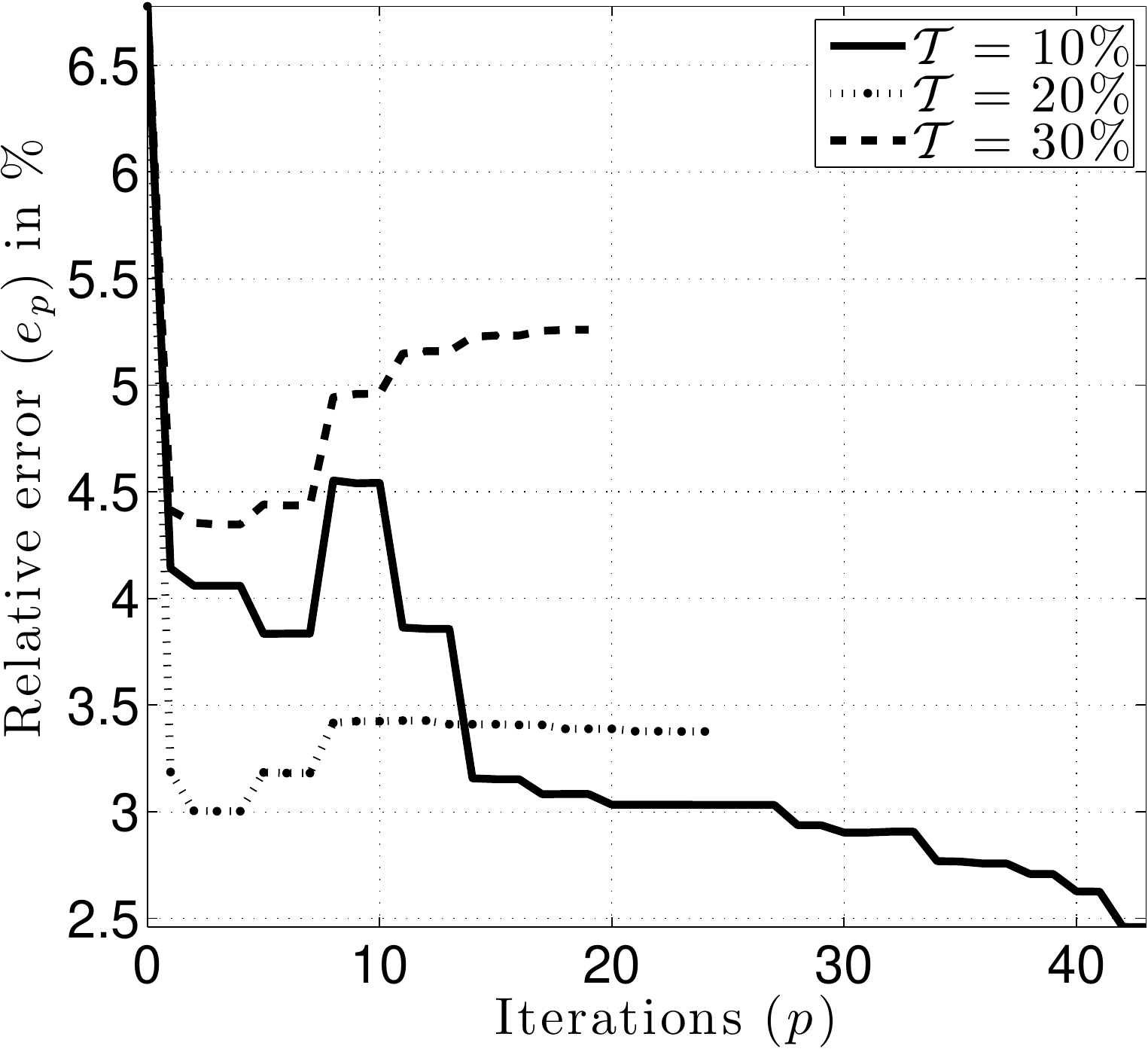}}\label{fig:chain.10}}
\caption{Selective reconstruction followed by adaptive refinement, $30 \times 30$ data and $\varepsilon = 2\%$ noise}
\label{fig:reconstruction.selective.et.adaptative}
\end{figure}

\begin{table}[htbp]
\centering
\begin{tabular}{cc|*{3}{cc}}
\toprule

&  & \multicolumn{2}{c}{$15\times15$ data} & \multicolumn{2}{c}{$30\times30$ data} & \multicolumn{2}{c}{$60\times60$ data}\\ 
$\mathcal{T}$ & $\varepsilon$ & $N$ &  $e_{p_{\text{End}}}$ & $N$ &  $e_{p_{\text{End}}}$ & $N$ &  $e_{p_{\text{End}}}$\\\midrule

\multirow{3}{*}{10$\%$} 

 & \multirow{1}{*}{5$\%$} 
 & 60 & 3.9$\%$ & 57 & 3.1$\%$ & 61 & 3.1$\%$ \\ 
 & \multirow{1}{*}{2$\%$} 
 & 52 & 2.6$\%$ & 52 & 2.4$\%$ & 55 & 2.5$\%$ \\ 
 & \multirow{1}{*}{1$\%$} 
 & 49 & 2.6$\%$ & 52 & 2.5$\%$ & 52 & 2.5$\%$ \\ 
\midrule

\multirow{3}{*}{20$\%$} 

 & \multirow{1}{*}{5$\%$} 
 & 52 & 3.3$\%$ & 52 & 2.8$\%$ & 46 & 3.0$\%$ \\ 
 & \multirow{1}{*}{2$\%$} 
 & 22 & 3.8$\%$ & 19 & 3.7$\%$ & 22 & 3.3$\%$ \\ 
 & \multirow{1}{*}{1$\%$} 
 & 19 & 4.0$\%$ & 19 & 3.9$\%$ & 16 & 4.1$\%$ \\ 
\midrule

\multirow{3}{*}{30$\%$} 

 & \multirow{1}{*}{5$\%$} 
 & 16 & 4.1$\%$ & 19 & 3.6$\%$ & 16 & 4.2$\%$ \\ 
 & \multirow{1}{*}{2$\%$} 
 & 16 & 5.3$\%$ & 16 & 5.6$\%$ & 16 & 5.3$\%$ \\ 
 & \multirow{1}{*}{1$\%$} 
 & 15 & 5.5$\%$ & 16 & 5.6$\%$ & 16 & 5.6$\%$ \\ 
\bottomrule 

\end{tabular} 
\caption{\label{tab:12} Selective reconstruction chained with iterative refinement} 
\end{table}

\subsubsection*{Numerical example 2}

\paragraph{Set-up}
As a last example, we now consider a more elaborate and complex valued unknown index $n\etoile$, shown in Figure~\ref{fig:indice.ambitieux.EX}.
Besides, we also make this reconstruction more challenging by reducing the measurements aperture.
Incoming directions are still taken in $[0,2\pi]$, but there will be five less, and
 measurement directions are now taken in $[0,\frac{3}{2}\pi]$.
In this situation, the localization function presented in Theorem~\ref{thm:localization} cannot be defined.
So, we consider the technical modification, recalled in Remark~\ref{rmk:localization}, that is conjectured to cover this case.
Furthermore, we assume that $n\etoile$ was known before the central perturbation.
So, we consider the initial guess $n_0$ shown in Figure~\ref{fig:indice.ambitieux.IG}. 

Finally, to remain in the previously defined context, we present the results of Algorithm~\ref{algo:combined} applied to this new geometry with the same selection thresholds $\mathcal{T} = 10\%$, $\mathcal{T} = 20\%$ and $\mathcal{T} = 30\%$.

\begin{figure}[htbp]
\centering
\subfloat[$\re n\etoile$]{\makebox[.45\linewidth]{\includegraphics[width=.45\linewidth]{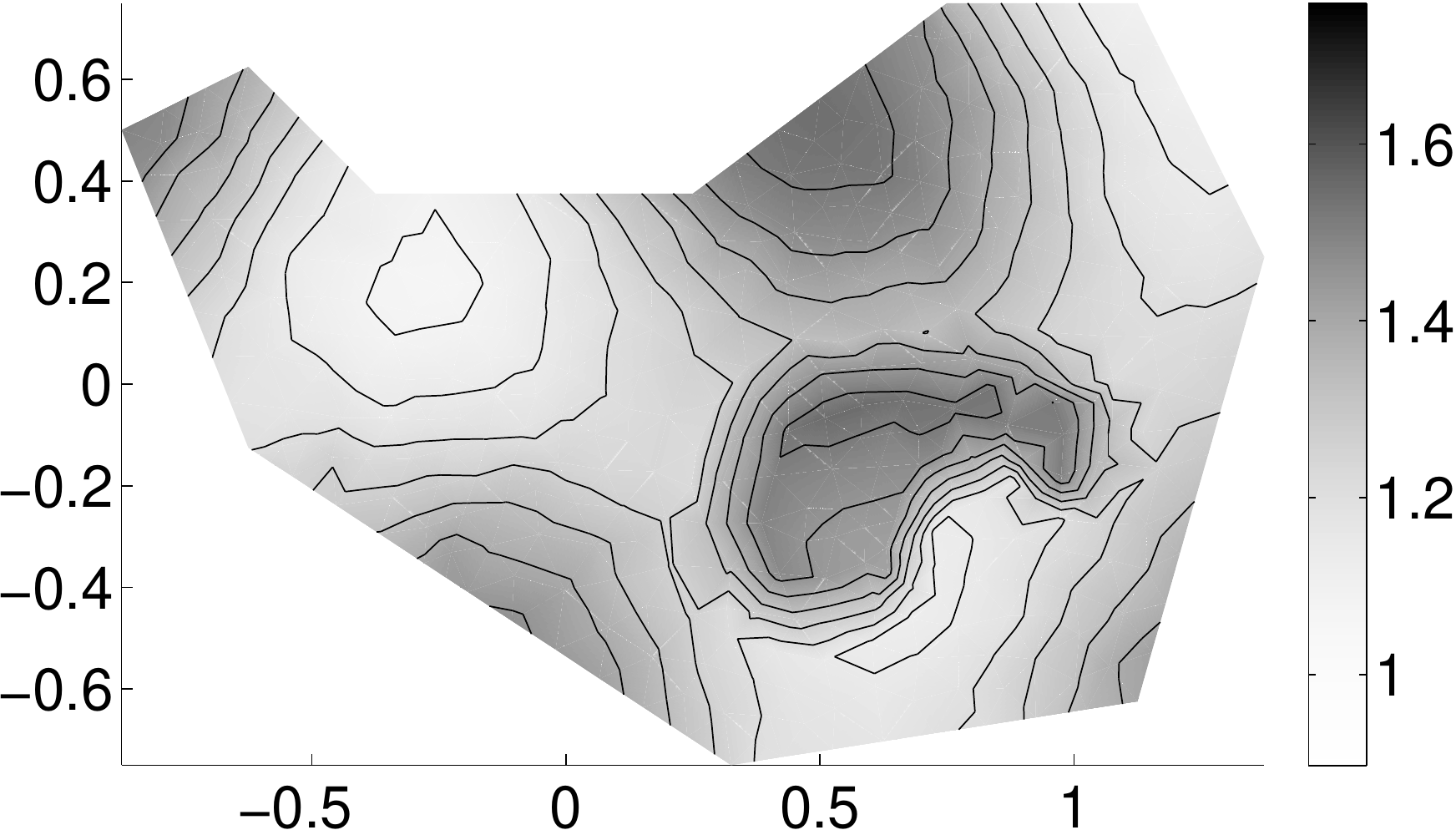}}\label{fig:indice.ambitieux.1}}\hfill
\subfloat[$\im n\etoile$]{\makebox[.45\linewidth]{\includegraphics[width=.45\linewidth]{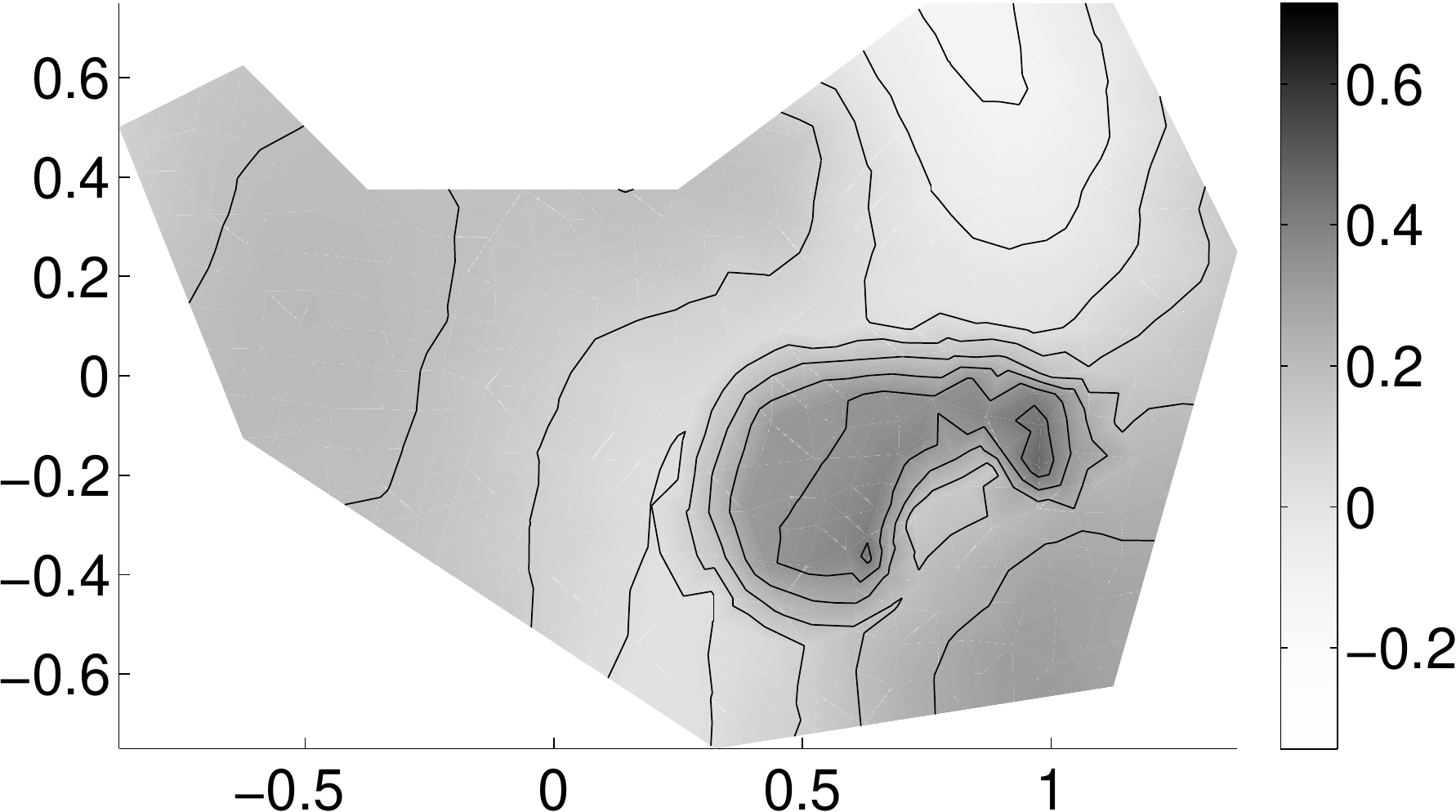}}\label{fig:indice.ambitieux.2}}
\caption{Exact index $n\etoile$}
\label{fig:indice.ambitieux.EX}
\end{figure}

\begin{figure}[htbp]
\centering
\subfloat[$\re n_0$]{\makebox[.45\linewidth]{\includegraphics[width=.45\linewidth]{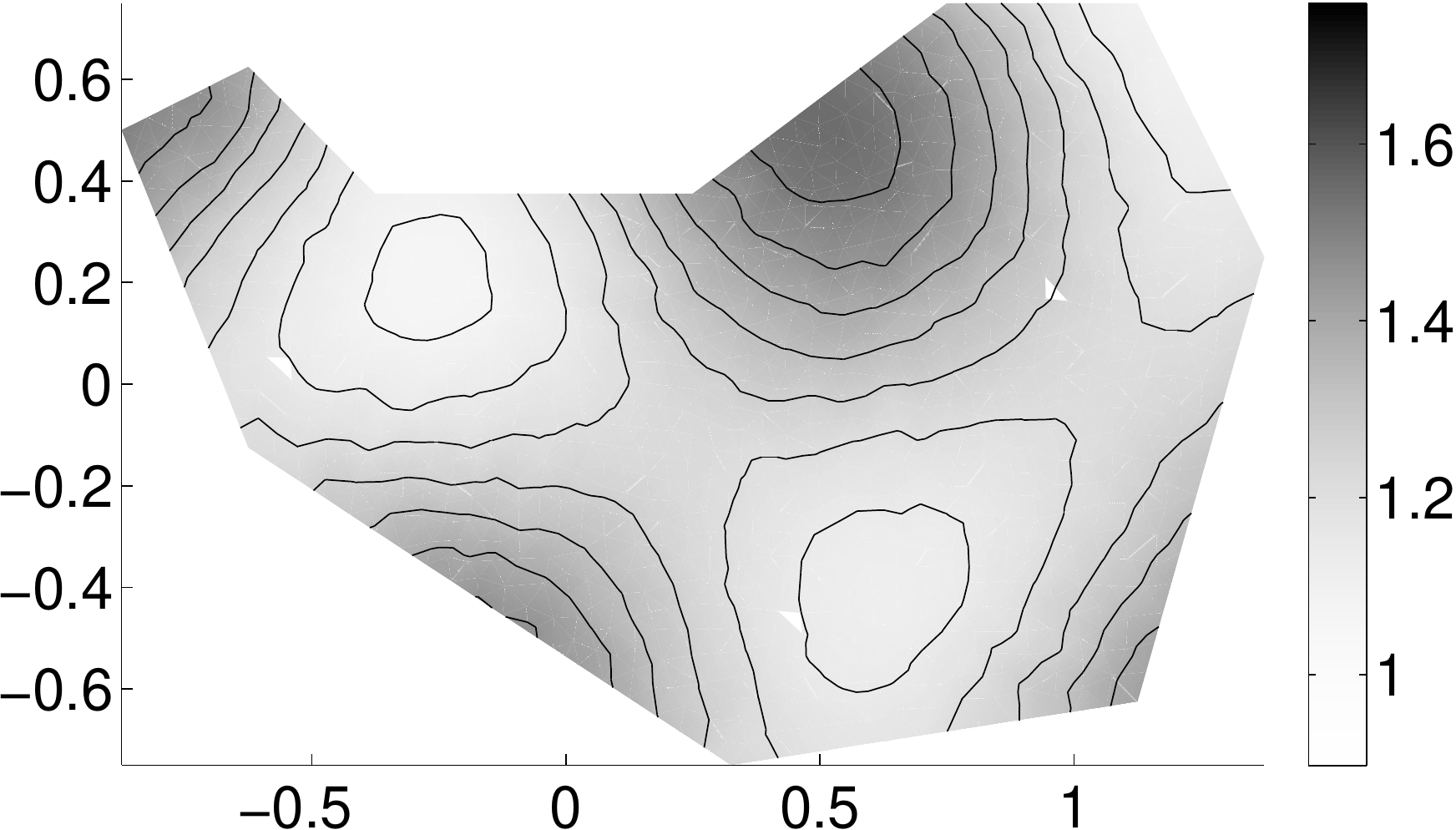}}\label{fig:indice.ambitieux.3}}\hfill
\subfloat[$\im n_0$]{\makebox[.45\linewidth]{\includegraphics[width=.45\linewidth]{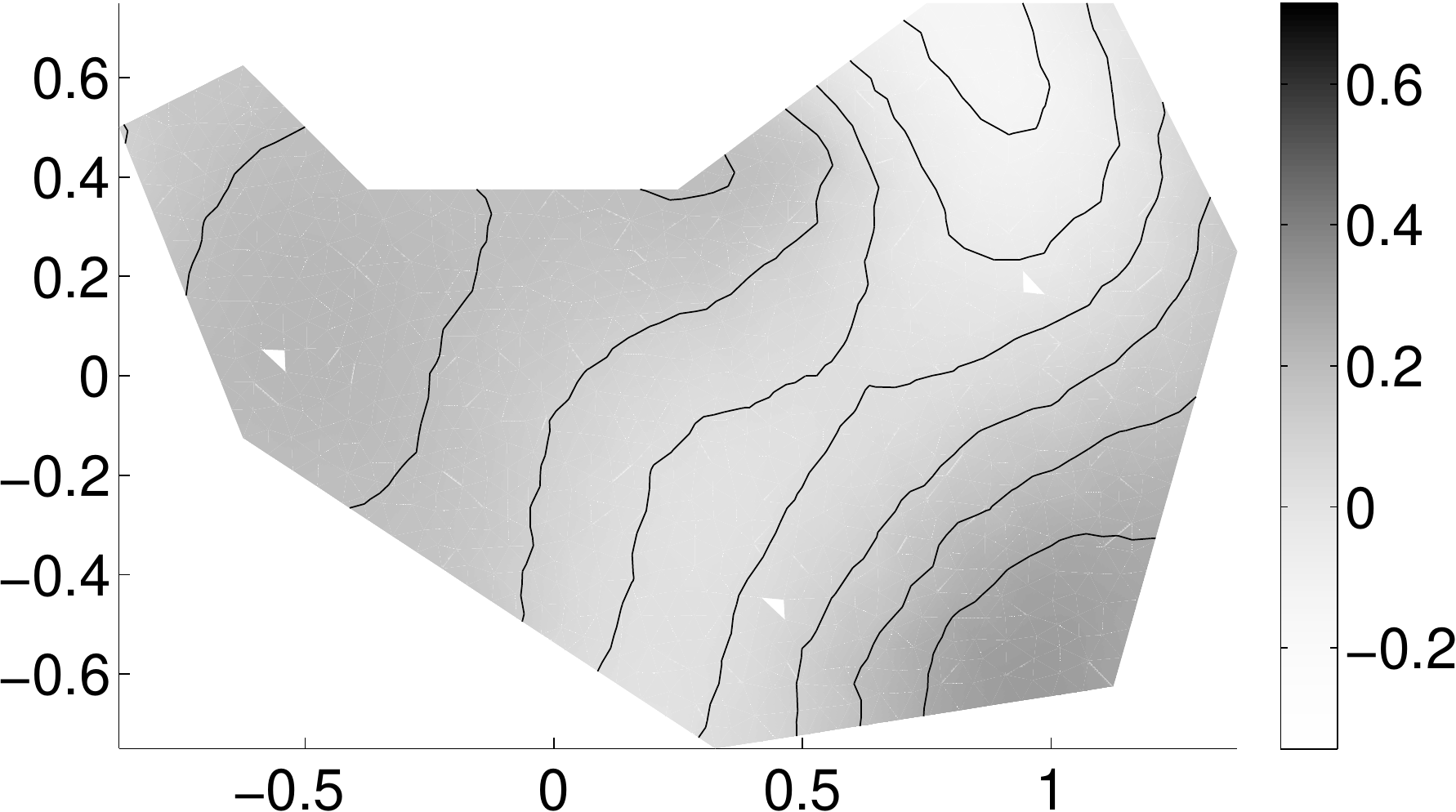}}\label{fig:indice.ambitieux.4}}
\caption{Initial guess $n_0$}
\label{fig:indice.ambitieux.IG}
\end{figure}

\paragraph{Results}
The reference reconstructions obtained with the usual Gauss-Newton reconstruction (Algorithm~\ref{algo:gnr}) in the special case of  $30 \times 25$ data and $2\%$ noise are synthesized in Figure~\ref{fig:indice.ambitieux.erreurs}. 

\begin{figure}[htbp]
\centering
\includegraphics[width=.46\linewidth]{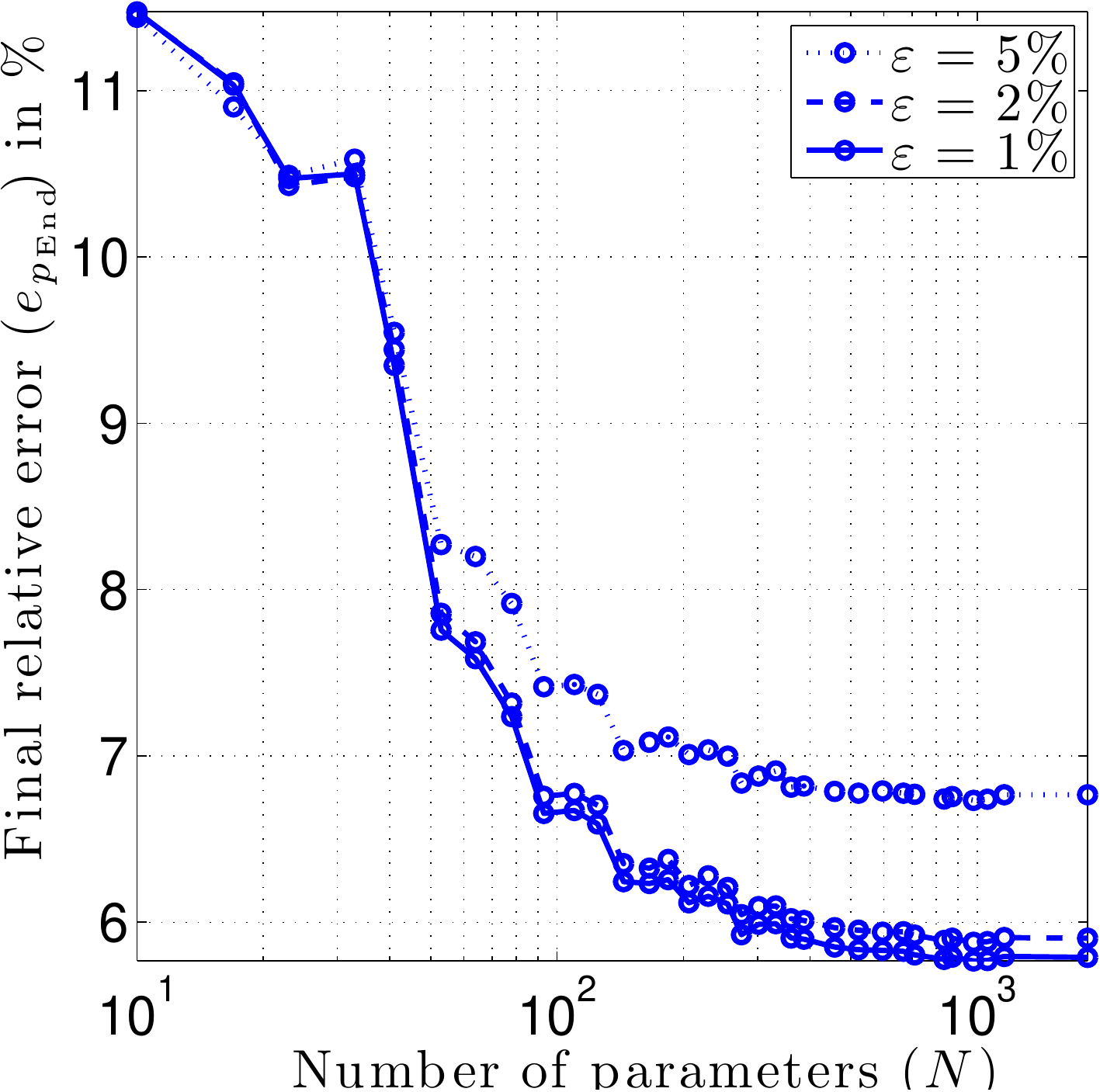}
\caption{Relative error for the usual Gauss-Newton method, with $30 \times 25$ data and different noise levels $\varepsilon$}
\label{fig:indice.ambitieux.erreurs}
\end{figure}

We then present in Figure~\ref{fig:chain2}  the  selected zones and the resulting reconstruction  corresponding to each selection  threshold.
In this case,  $\mathcal{T} = 20\%$ now seems to be the best threshold value, and  $\mathcal{T} = 30\%$ is still too high.
This is confirmed in Figure~\ref{fig:chain2.10}, where we can see that, even though $\mathcal{T} = 10\%$ allows to reach a satisfying precision, it requires much more refinements to do so than with $\mathcal{T} = 20\%$.

The results obtained in section~\ref{sec:enhancements} are thus reinforced by this example, exhibiting reconstructions comparable in precision to the full Gauss-Newton reconstruction, but with a much lower number of parameters.

\paragraph{Remark}
Note that with this less trivial test case, and contrary to what can be seen in Figure~\ref{fig:reference.zones}, the borders of the supports of the basis functions for the reconstruction do not correspond to the discontinuities of the exact index.

\begin{figure}[htbp]
\centering
\subfloat[Selection]{\makebox[.33\linewidth]{\includegraphics[width=.33\linewidth]{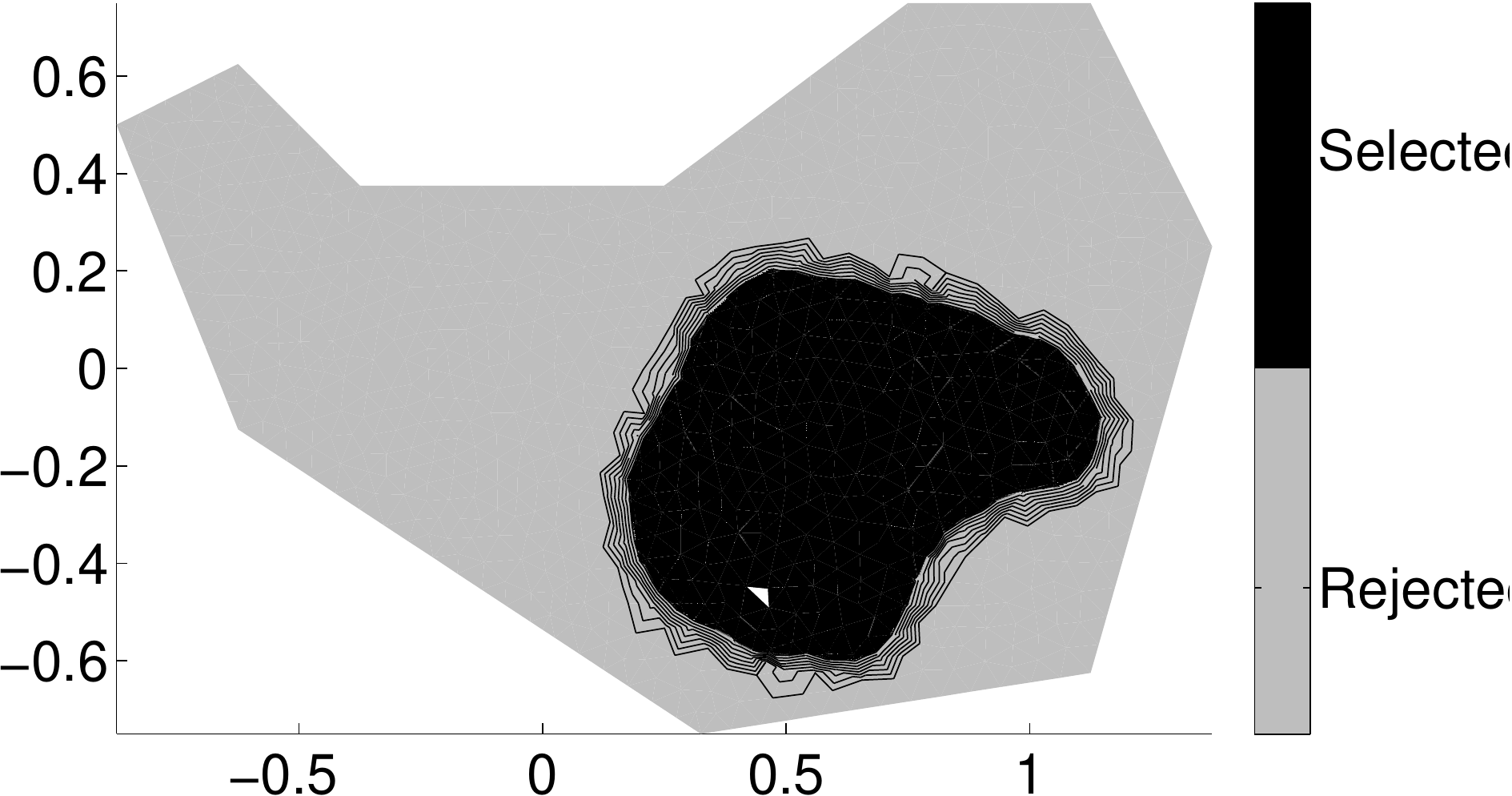}}\label{fig:chain2.1}}
\subfloat[Last iteration (real part)]{\makebox[.33\linewidth]{\includegraphics[width=.33\linewidth]{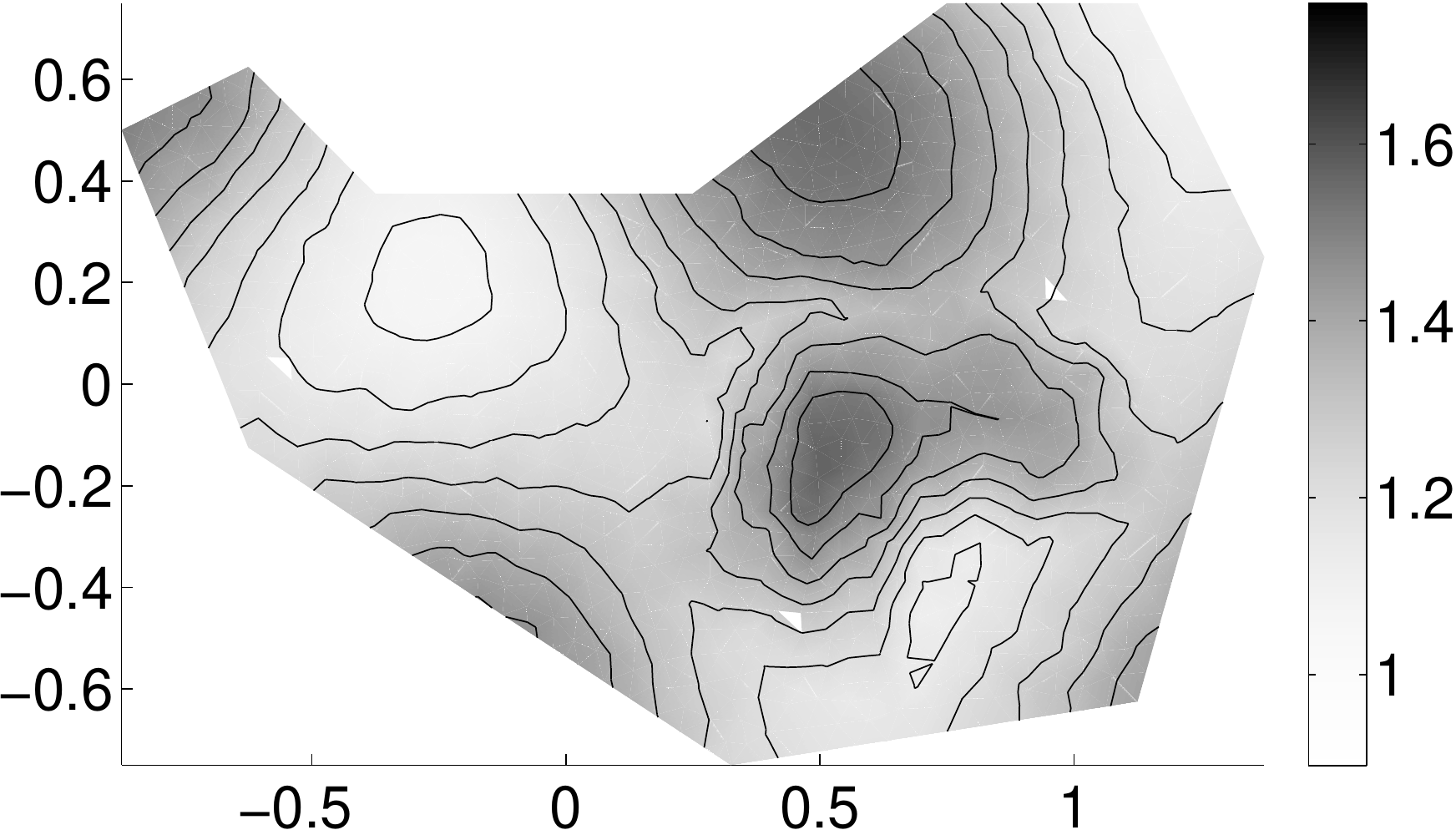}}\label{fig:chain2.2}}
\subfloat[Last iteration (imaginary part)]{\makebox[.33\linewidth]{\includegraphics[width=.33\linewidth]{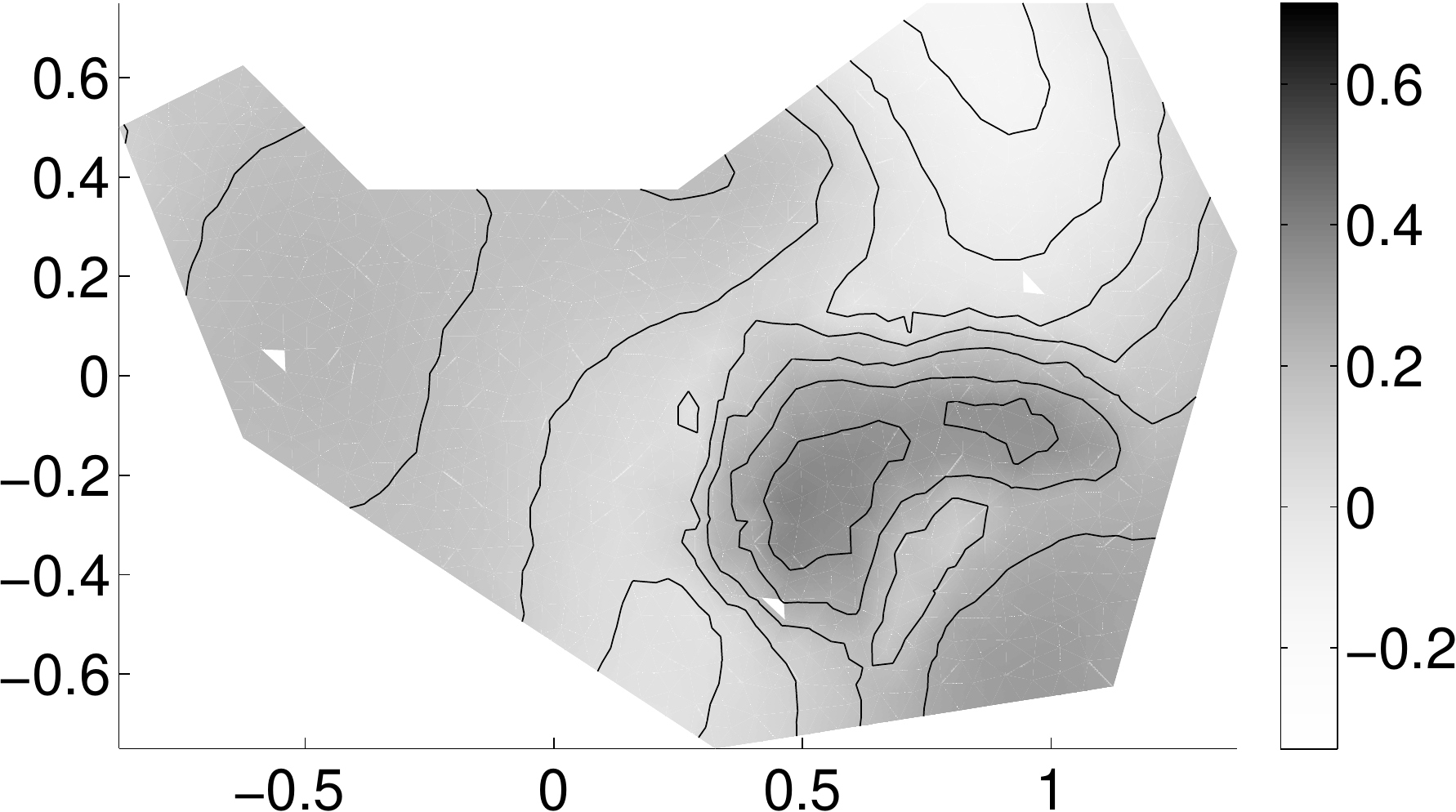}}\label{fig:chain2.3}}
$$\textrm{Selection threshold }\mathcal T = 10\%$$\hrule
\subfloat[Selection]{\makebox[.33\linewidth]{\includegraphics[width=.33\linewidth]{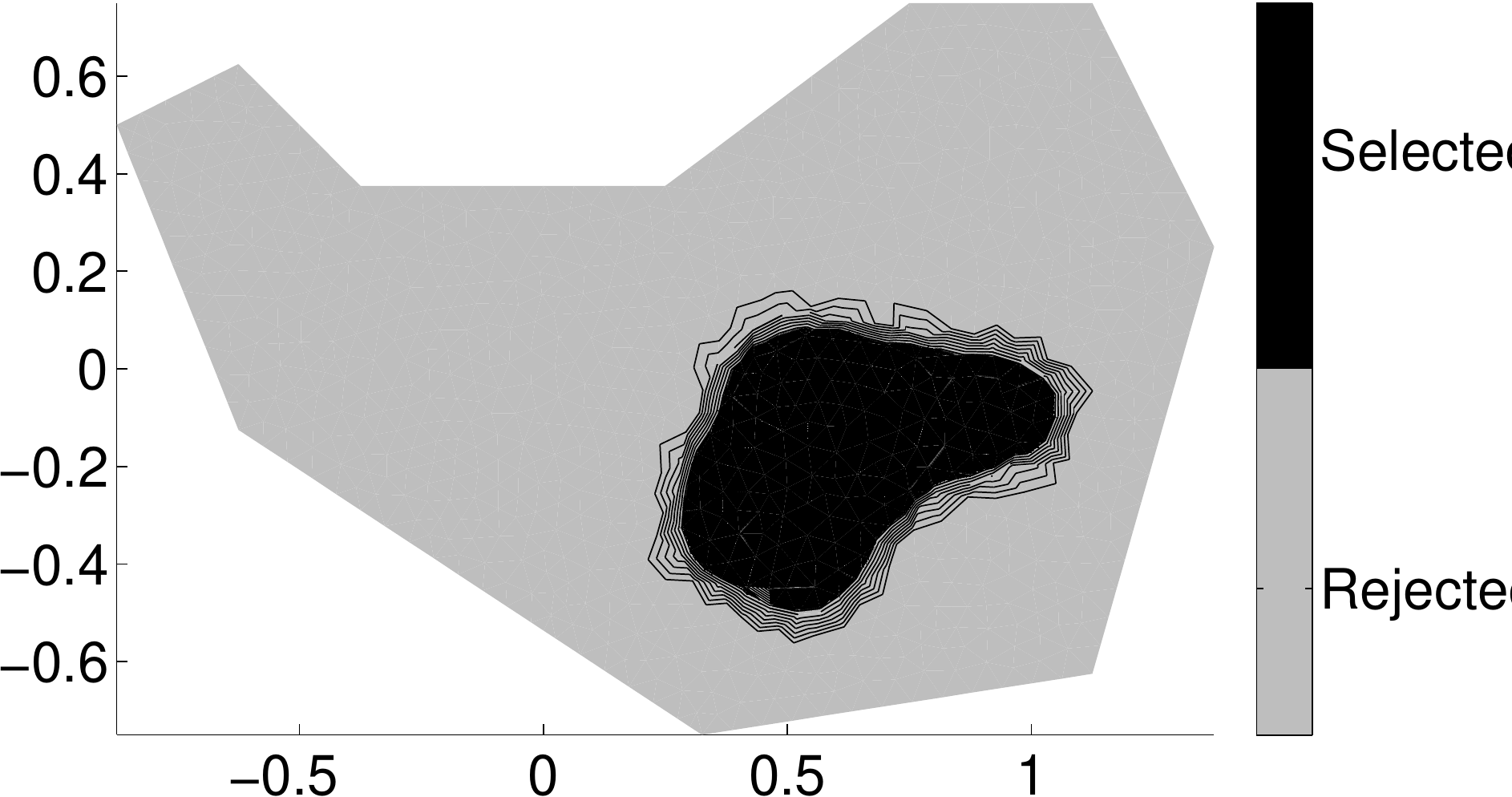}}\label{fig:chain2.4}}
\subfloat[Last iteration (real part)]{\makebox[.33\linewidth]{\includegraphics[width=.33\linewidth]{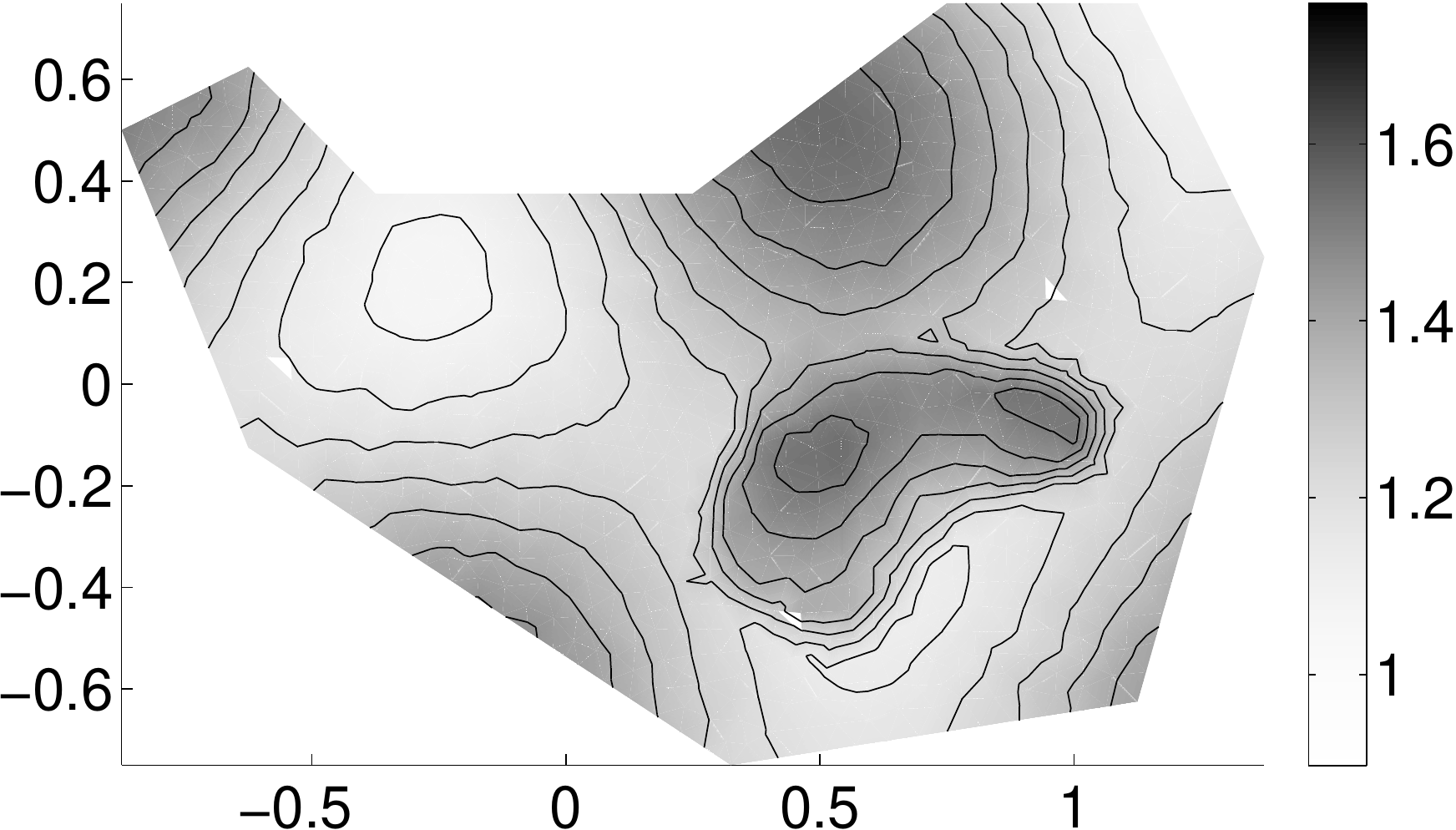}}\label{fig:chain2.5}}
\subfloat[Last iteration (imaginary part)]{\makebox[.33\linewidth]{\includegraphics[width=.33\linewidth]{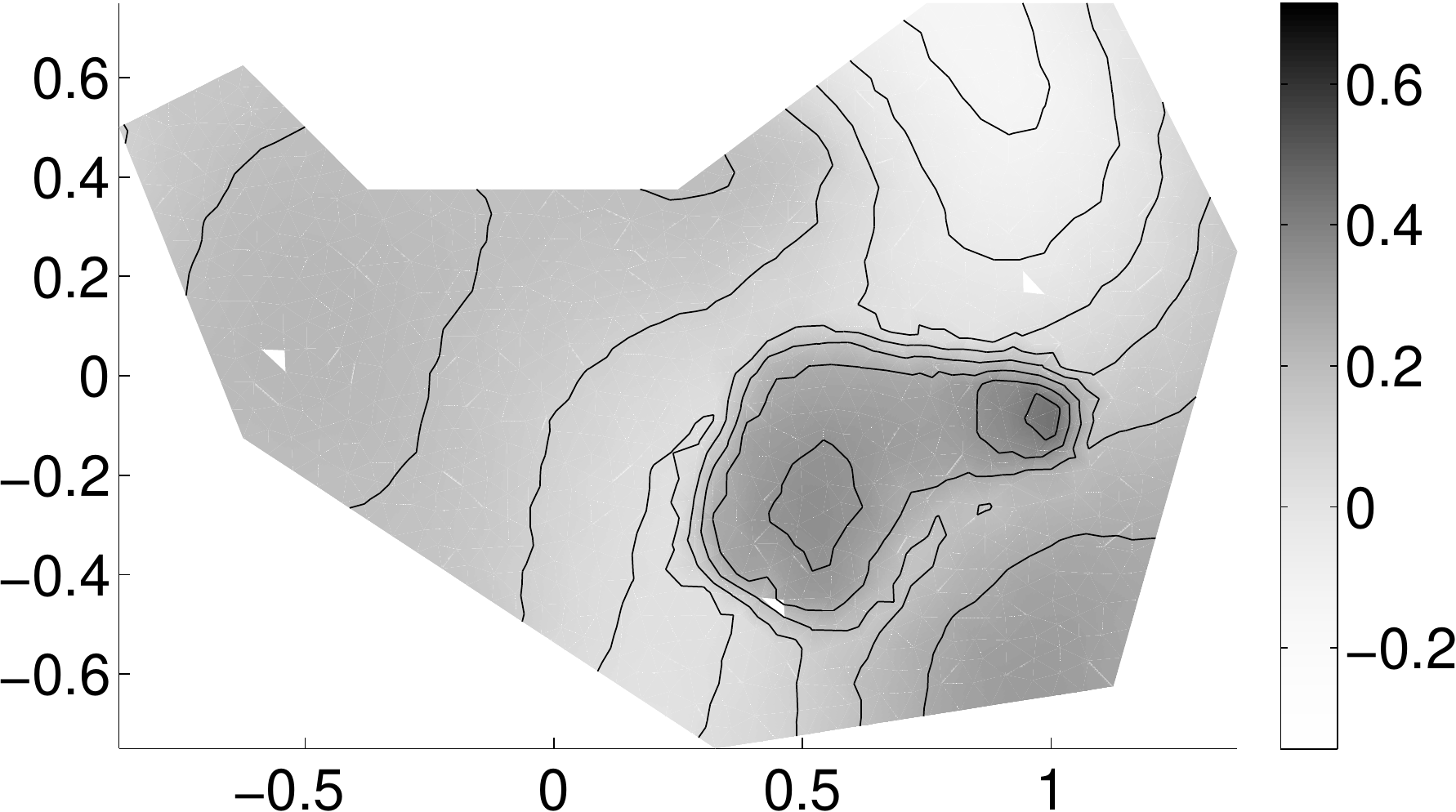}}\label{fig:chain2.6}}
$$\textrm{Selection threshold }\mathcal T = 20\%$$\hrule
\subfloat[Selection]{\makebox[.33\linewidth]{\includegraphics[width=.33\linewidth]{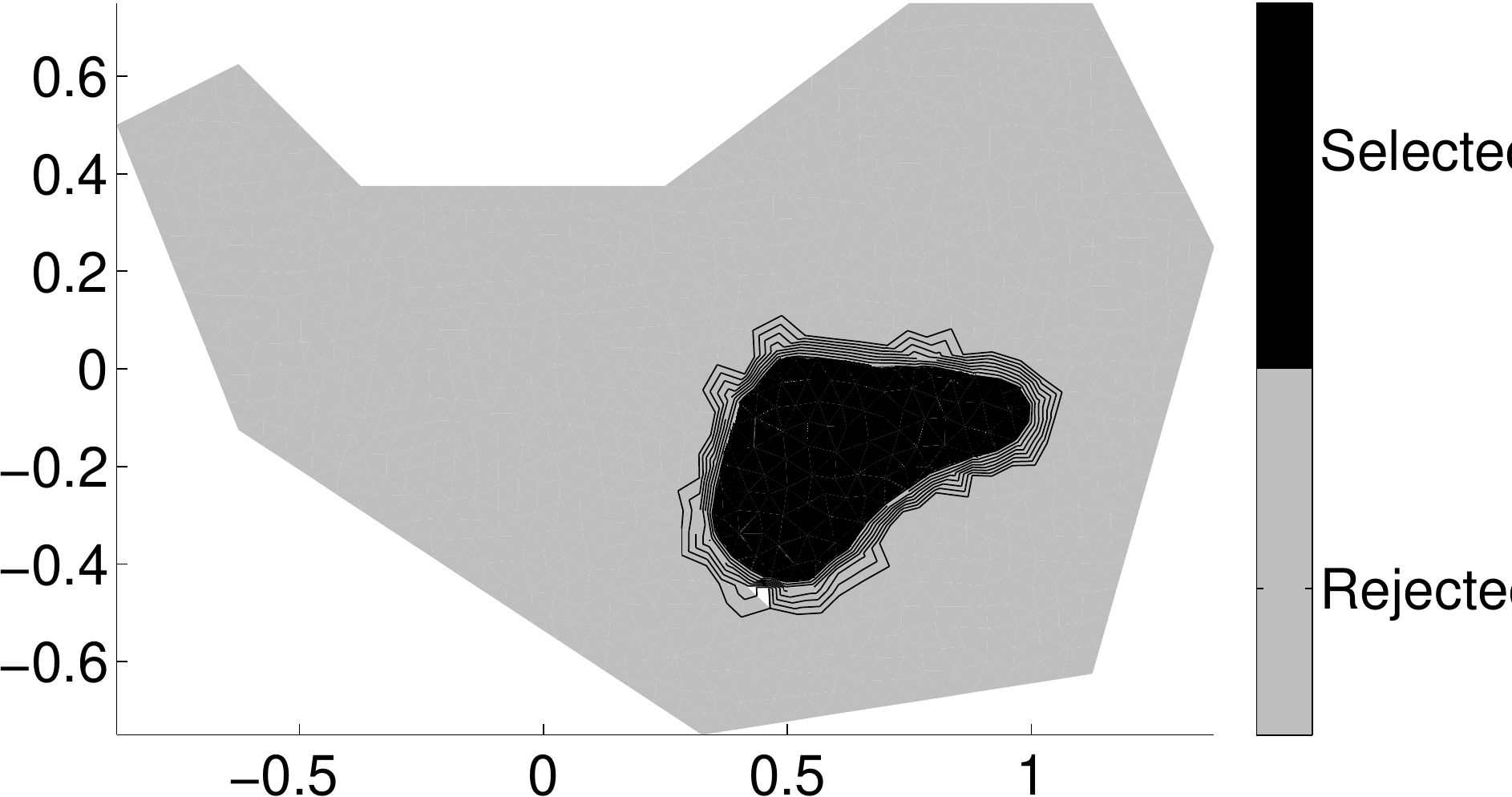}}\label{fig:chain2.7}}
\subfloat[Last iteration (real part)]{\makebox[.33\linewidth]{\includegraphics[width=.33\linewidth]{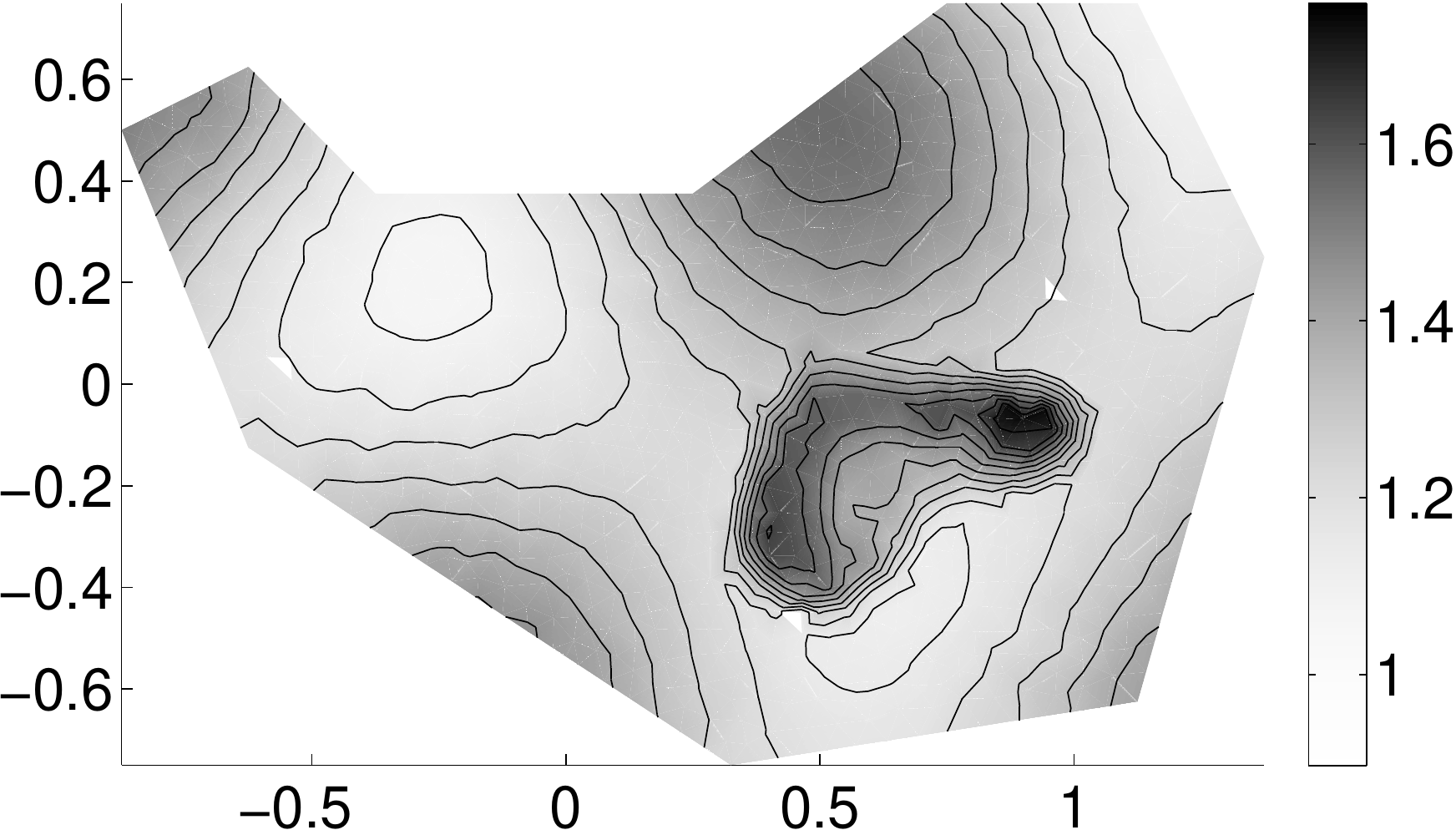}}\label{fig:chain2.8}}
\subfloat[Last iteration (imaginary part)]{\makebox[.33\linewidth]{\includegraphics[width=.33\linewidth]{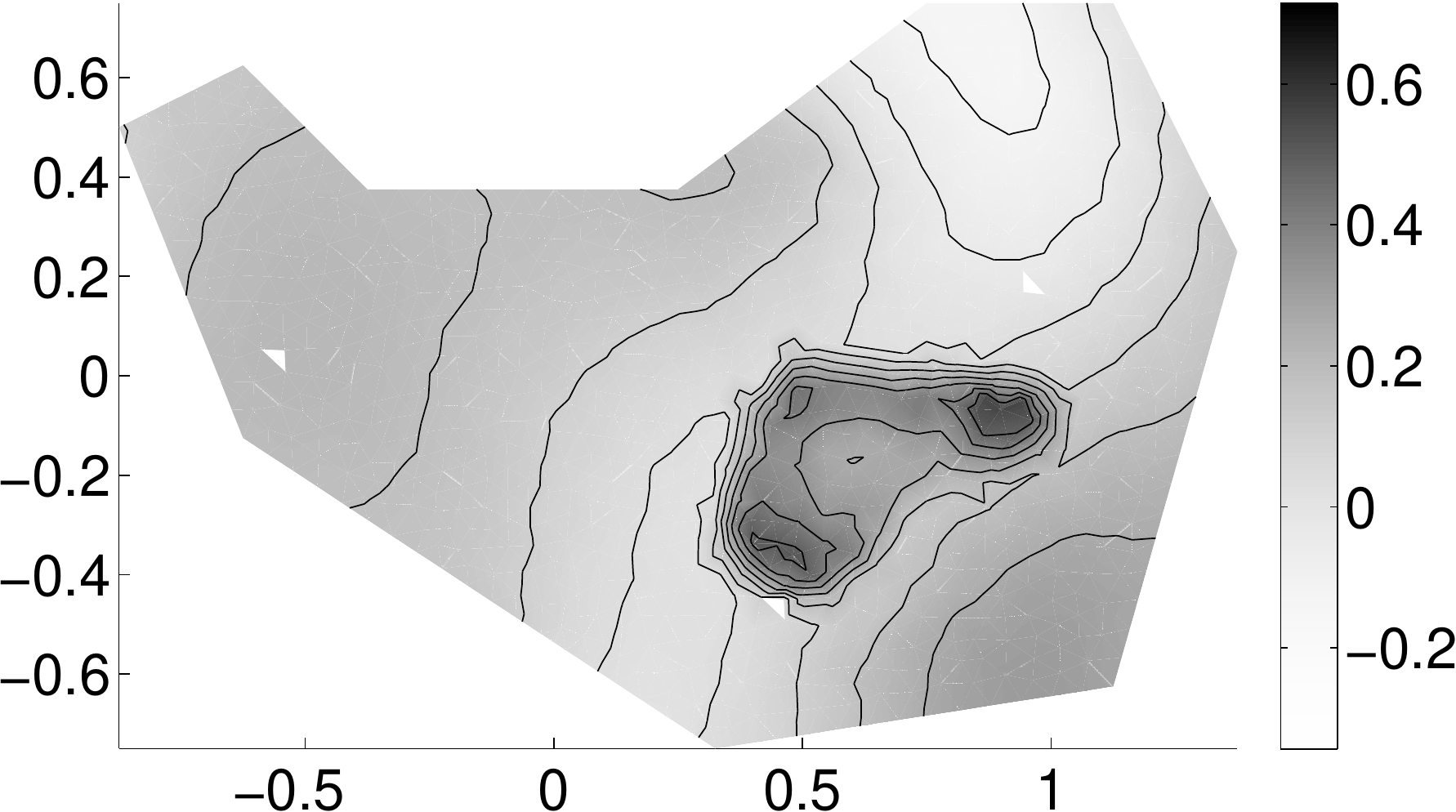}}\label{fig:chain2.9}}
$$\textrm{Selection threshold }\mathcal T = 30\%$$\hrule
\subfloat[Evolution of the relative error]{\makebox[.44\linewidth]{\includegraphics[width=.44\linewidth]{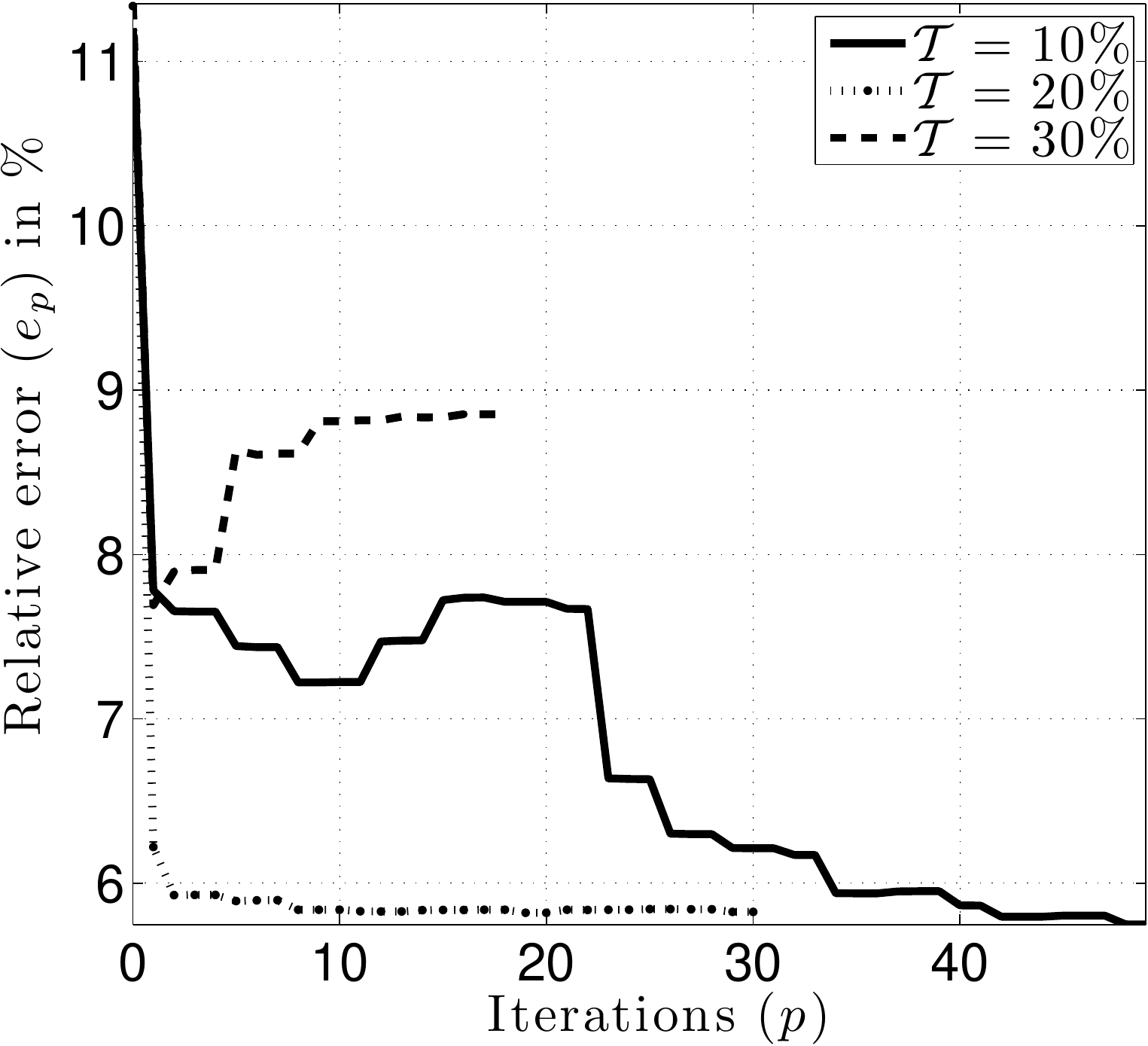}}\label{fig:chain2.11}\label{fig:chain2.10}}
\caption{Selective reconstruction chained with adaptive refinement for the more elaborate example with $30 \times 30$ data and $\varepsilon = 2\%$ noise }
\label{fig:chain2}
\end{figure}

\section{Conclusion}\label{sec:conclusion}

We have used a defect localization method to propose two ways of reducing the number of parameters in the reconstruction of an unknown refraction index.
The first method is set in the context of defects identification and uses their localization to reconstruct only the useful parameters of the whole index.
The second method is an adaptive refinement, based on defect localization to iteratively reconstruct a better approximation with a limited number of parameters.
We have obtained good numerical results with both methods.

The reconstruction could however be further enhanced by two automations:
some automatic choice of the threshold for the defect localization function and some automatic selection of the regularization parameter.
The second issue has been reviewed for example in~\cite{art.farquharson.04,art.bazan.09} and is claimed to be less critical when using a so-called Multiplicative Regularization described in  \cite{art.vandenbreg.99}.
However, for now, we have not been able to further enhance our results with these techniques.

\ack

Support for some of the authors of this work was provided by the FRAE (Fondation de Recherche pour l'A\'eronautique et l'Espace, \texttt{http://www.fnrae.org/}), research project IPPON.

\sectionstar{References}
\bibliography{yann-biblio}
\end{document}